\documentclass[12pt,reqno]{article}
\usepackage[utf8]{inputenc}
\usepackage{amsmath}
\usepackage{amsfonts}
\usepackage{amssymb}
\usepackage{amsthm}	
\usepackage{cases}
\usepackage{subeqnarray}
\usepackage{cite}
\usepackage[colorlinks]{hyperref}
\usepackage{graphics}
\usepackage{graphicx}
\usepackage{fancyref}
\usepackage{epstopdf}
\usepackage{float}
\usepackage{enumerate} 
\usepackage{bm}
\usepackage{titlesec}
\usepackage[titletoc,toc,title]{appendix}
\usepackage{mathrsfs}

\setlength{\oddsidemargin}{0cm}

\makeatletter\setlength{\textwidth}{16.0cm}
\setlength{\textheight}{21.0cm}
\allowdisplaybreaks \numberwithin{equation}{section}
\newtheorem{theorem}{Theorem}[section]

\newtheorem{lemma}{Lemma}[section]

\newtheorem{definition}{Definition}[section]
\newtheorem{remark}{Remark}[section]

\newtheorem{thmx}{Theorem}






\begin{document}

\title{Asymptotic behavior of the principal eigenvalue and basic reproduction ratio for periodic patch models
\date{\empty}
\author{ Lei Zhang $^{a,b}$, Xiao-Qiang Zhao $^{b}$	\\
	{\small a Department of Mathematics, Harbin Institute of Technology at Weihai,}\\
	{\small Weihai, Shandong 264209, China.}\\
	 {\small b Department of Mathematics and Statistics, Memorial University of Newfoundland,}\\
	 {\small St. John's, NL A1C 5S7, Canada.}\\
}
}\maketitle
\vspace{-5mm}
 \centerline{(SCIENCE CHINA Mathematics, to appear)}
 
\begin{abstract}
	This paper is devoted to the study of the asymptotic behavior of the principal eigenvalue and basic reproduction ratio associated with periodic population models in a patchy environment for small and large dispersal rates. We first deal with the eigenspace corresponding to the zero eigenvalue of the connectivity matrix. Then we investigate the limiting profile of the principal eigenvalue of an associated periodic eigenvalue problem as the dispersal rate goes to zero and infinity, respectively. We further establish the asymptotic behavior of the basic reproduction ratio in the case of small and large dispersal rates. Finally, we apply these results to a periodic Ross-Macdonald patch model.
	\par
	\textbf{Keywords}: Asymptotic behavior, periodic systems, patchy environment, basic reproduction ratio, principal eigenvalue
	
	\textbf{AMS Subject Classification (2020)}:  34D05, 15A18, 92D30.
\end{abstract}
\section{Introduction}

In 2007, Allen et al. \cite{allen2007Asymptotic} studied the following
epidemic model in a patchy environment:
\begin{equation}\label{equ:SIS:auto}
	\begin{aligned}
		\frac{\mathrm{d} S_i}{\mathrm{d} t}= d_S \sum_{j =1}^{n} l_{ij} S_j - \beta_i \frac{S_i I_i}{S_i + I_i} + \gamma_i I_i, &~ i=1,\cdots,n,\\
		\frac{\mathrm{d} I_i}{\mathrm{d} t}= d_I \sum_{j =1}^{n} l_{ij} I_j + \beta_i \frac{S_i I_i}{S_i + I_i} - \gamma_i I_i, &~ i=1,\cdots,n.\\
	\end{aligned}
\end{equation}
Here $n \geq 2$ is the number of patches,  $S_i(t)$ and $I_i(t)$ are the numbers of susceptible and infected individuals in patch $i$ at time $t$, respectively. The parameters $d_S$ and $d_I$ are the migration rate of susceptible and infected populations; $l_{ij}$ is a nonnegative constant which denotes the degree of movement from patch $j$ to patch $i$ for $j \neq i$ and $l_{ii}=- \sum_{j \neq i}l_{ji}$ is the degree of movement from patch $i$ to all other patches; $\beta_i \geq 0$ and $\gamma_i >0$ are disease transmission and recovery rates at patch $i$, respectively. Let $L=(l_{ij})_{n \times n}$, $F=\mathrm{diag}(\beta_1,\cdots,\beta_n)$ and $V=\mathrm{diag}(\gamma_1,\cdots,\gamma_n)$. Following \cite{diekmann1990definition,van2002reproduction}, the basic reproduction ratio of system \eqref{equ:SIS:auto} is expressed as  $\mathcal{R}_0(d_I)=r((V-d_I L)^{-1}F)$, $d_I \geq 0$, where  $r((V-d_I L)^{-1}F)$ is the spectral radius of $(V-d_I L)^{-1}F$.  

Recall that  a square matrix is said to be cooperative if its off-diagonal elements are nonnegative, and nonnegative if all elements are nonnegative;
a square matrix is said to be irreducible if it is not similar, via a permutation, to a block lower triangular matrix, and reducible if otherwise;
and the spectral bound (also called the stability modulus) of  a square matrix $A$ is defined as 
$s(A)=\sup \{\mathrm{Re} \lambda: \lambda \text{ is an eigenvalue of } A\}$.

Under the assumption that the migration matrix $L$ of infected individuals is symmetric and irreducible, Allen et al. \cite{allen2007Asymptotic} showed that
$$
\lim\limits_{d_I \rightarrow 0^+} s(d_I L -V+F) =\max_{1 \leq i \leq n}(\beta_i -\gamma_i), ~\lim\limits_{d_I \rightarrow +\infty} s(d_I L -V+F)= \frac{1}{n}\sum_{i =1}^{n} (\beta_i -\gamma_i),
$$
$$
\lim\limits_{d_I \rightarrow 0^+}\mathcal{R}_0(d_I)= \mathcal{R}_0(0)=\max_{1 \leq i \leq n} \frac{\beta_i}{\gamma_i}, \text{ and }
\lim\limits_{d_I \rightarrow +\infty}\mathcal{R}_0(d_I)= \frac{\sum_{i =1}^{n}\beta_i}{\sum_{i =1}^{n} \gamma_i}.
$$
Without assuming the symmetry of $L$, Gao and Dong \cite{gao2019travel,gao2020fast} and Chen et al. \cite{chen2020asymptoticJMB}  recently proved the same limiting properties  for $s(d_I L -V+F)$ and $\mathcal{R}_0(d_I)$ as  $d_I \rightarrow 0^+$, and generalized the other two limits into
$$
\lim\limits_{d_I \rightarrow +\infty} s(d_I L -V+F)= \sum_{i =1}^{n} (\beta_i -\gamma_i) q_i, \quad
\lim\limits_{d_I \rightarrow +\infty}\mathcal{R}_0(d_I)= \frac{\sum_{i =1}^{n}\beta_i q_i}{\sum_{i =1}^{n} \gamma_i q_i}, 
$$
where  $\bm{q}=(q_1,\cdots,q_n)^T$ is a right eigenvector of $L$ corresponding to  the eigenvalue $0$ such that  $\sum_{i=1}^{n} q_i=1$.

Note that the connectivity matrix  obtained from the linearization 
of system \eqref{equ:SIS:auto} at the disease-free equilibrium refers to the migration matrix of infected individuals.
In many multi-population models in a patchy environment, however, the connectivity matrix is reducible, although the migration matrix for each population is irreducible (see, e.g., \cite{gao2014periodic,gao2012multipatch}).
Thus, a natural question is how to  further characterize the above limiting profiles 
for  $s(d_I L -V+F)$ and $\mathcal{R}_0(d_I)$ without the irreducibility condition on the connectivity matrix. 
Such problems have been explored for reaction-diffusion systems (see, e.g., \cite{allen2008asymptotic,wang2012basic,magal2019basic,chen2020asymptoticSIAP,dancer2009principal,lam2016asymptotic,zhang2020asymptotic}).
In the case where the connectivity matrix is symmetric, this question is much easier than the associated problem for reaction-diffusion systems. 
It is worthy pointing out that the limiting problem for large dispersal rate
is highly nontrivial when the connectivity matrix is non-symmetric.

For time-periodic patch population models (see, e.g., \cite{gao2014periodic,zhang2007periodic}),  we may conjecture that 
similar  limiting results on the principal eigenvalue 
and basic reproduction ratio hold true.  This conjecture was 
confirmed for reaction-diffusion systems (see, e.g., \cite{hutson2001evolution,yang2019dynamics,zhang2020asymptotic,peng2012reaction,peng2015effects}).  However, it seems that these methods and arguments  may not be well adapted to such periodic patch models due to the  lack of irreducibility and symmetry for the connectivity matrix.

Our purpose of this paper is to address the afore-mentioned two
questions for patch population models.  Motivated by \cite{wang2004epidemic,zhang2007periodic,allen2007Asymptotic,gao2012multipatch,gao2020fast}, we assume that  the connectivity matrix $L$ admits the property that 
\begin{itemize}
	\item[(H1)] $L=(l_{ij})_{n \times n} $ is an $n\times n$ cooperative matrix with zero column sums.
\end{itemize}
Then we have the following elementary observation, which plays a key role in our analysis.

\begin{thmx}[see Lemmas \ref{lem:L_K}, \ref{lem:L_K_M} and \ref{lem:PQM}] \label{thm:A}
	Assume that {\rm (H1)} holds. Let $\alpha_0$ be the algebraic multiplicity of the zero eigenvalue of $L$.
	Then the following statements are valid:
	\begin{itemize}
		\item[\rm (i)]There exist nonnegative matrices $P=(p_{hj})_{\alpha_0 \times n}$ and $Q=(q_{il})_{n \times \alpha_0}$ such that $PL$ and $LQ$ are zero matrices and $PQ$ is an $\alpha_0 \times \alpha_0$ identity matrix.
		\item[\rm (ii)] If $M$ is an $n \times n$ cooperative matrix, then 
		$PMQ$ is an $\alpha_0 \times \alpha_0$ cooperative matrix.
		\item[\rm (iii)] Let $\hat{P}=(\hat{p}_{hj})_{\alpha_0 \times n}$ and $\hat{Q}=(\hat{q}_{il})_{n \times \alpha_0}$ be two nonnegative matrices such that $\hat{P}L$ and $L\hat{Q}$ are zero matrices and $\hat{P}\hat{Q}$ is an $\alpha_0 \times \alpha_0$ identity matrix. Then $P M Q$ is similar to $\hat{P} M \hat{Q}$.
	\end{itemize}
\end{thmx}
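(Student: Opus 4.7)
The plan is to exploit the fact that $L^{T}$ acts as the infinitesimal generator of a continuous-time Markov chain on $\{1,\ldots,n\}$. Under (H1), $L^{T}$ is a cooperative matrix with zero row sums, and the matrix semigroup $e^{tL}$ is therefore nonnegative with column sums equal to one (since $\mathbf{1}^{T}e^{tL}=\mathbf{1}^{T}$); in particular $\|e^{tL}\|_{1}=1$ for every $t\geq 0$. The spectrum of $L$ lies in $\{\Realparts z\leq 0\}$ with $0$ being the only possible eigenvalue on the imaginary axis, by Perron--Frobenius applied to $L+cI$ for $c$ large; combined with the uniform boundedness of $e^{tL}$, this forces every Jordan block of $L$ at $0$ to have size one, so $\dim\ker L=\dim\ker L^{T}=\alpha_{0}$. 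This dimension count is the quantitative backbone of the whole theorem.

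For (i), I would decompose the digraph associated with $L^{T}$ into its strongly connected components and identify the $\alpha_{0}$ closed (recurrent) classes $C_{1},\ldots,C_{\alpha_{0}}$, with the remaining indices forming the (possibly empty) transient set $T$. Taking the $h$-th column of $Q$ to be $q^{(h)}$, the unique probability vector supported on $C_{h}$ with $Lq^{(h)}=0$ (the stationary distribution of the irreducible block on $C_{h}$, extended by zero), and the $h$-th row of $P$ to be $p^{(h)}$, defined by $p^{(h)}_{i}=\delta_{hh'}$ for $i\in C_{h'}$ and $p^{(h)}_{i}=\bP_{i}[\text{absorption in }C_{h}]$ for $i\in T$, one obtains nonnegative matrices with $LQ=0$ by construction, with $PL=0$ because absorption probabilities are harmonic for $L^{T}$, and with
\[
(PQ)_{hh'}=\sum_{i\in C_{h'}}p^{(h)}_{i}q^{(h')}_{i}=\delta_{hh'},
\]
so $PQ=I_{\alpha_{0}}$.

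For (ii), fix $h\neq h'$ and observe that $q^{(h')}$ is supported in $C_{h'}$ whereas $p^{(h)}$ vanishes on $C_{h'}$. Every nonzero summand of
\[
(PMQ)_{hh'}=\sum_{i,j}p^{(h)}_{i}\,m_{ij}\,q^{(h')}_{j}
\]
therefore has $i\notin C_{h'}$ and $j\in C_{h'}$, so $i\neq j$, and the cooperativity of $M$ gives $m_{ij}\geq 0$; hence $(PMQ)_{hh'}\geq 0$.

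For (iii), the vectors $q^{(1)},\ldots,q^{(\alpha_{0})}$ have pairwise disjoint supports on the recurrent part and are thus linearly independent, so by the dimension count they form a basis of $\ker L$; analogously the rows of $P$ span the left kernel. Any other admissible pair $(\hat{P},\hat{Q})$ therefore determines unique invertible matrices $A,B\in\Real^{\alpha_{0}\times\alpha_{0}}$ with $\hat{Q}=QA$ and $\hat{P}=BP$, and the normalization $I_{\alpha_{0}}=\hat{P}\hat{Q}=BPQA=BA$ forces $B=A^{-1}$, whence $\hat{P}M\hat{Q}=A^{-1}(PMQ)A$. The main technical obstacle is really the semisimplicity claim of the opening paragraph: without it one cannot guarantee the existence of $\alpha_{0}$ linearly independent nonnegative nullvectors of $L$, and the uniqueness of $A$ in (iii) can fail; once semisimplicity is secured, the rest of the argument is bookkeeping about supports together with elementary linear algebra.
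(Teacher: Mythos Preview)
Your proof is correct and takes a different, more probabilistic route than the paper. The paper works with the Frobenius normal form of $L$: it permutes $L$ into block lower-triangular form with irreducible diagonal blocks, identifies the blocks with $s(L_{ll})=0$ as those whose off-diagonal block-columns vanish (Lemma~\ref{lem:L_K:0}), and then constructs the rows of $P$ and columns of $Q$ explicitly, solving a linear system over the remaining blocks to fill in the transient components of each $\bm p_l$ (Lemma~\ref{lem:L_K}); semisimplicity of $0$ is verified by a direct induction on the block structure (Claims~1 and~2 in that proof). You instead read $L^{T}$ as a CTMC generator, take the columns of $Q$ to be stationary distributions on the closed classes and the rows of $P$ to be absorption-probability vectors, and obtain semisimplicity from the slick observation that $\|e^{tL}\|_{1}\equiv 1$ forbids nontrivial Jordan blocks at $0$. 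Part~(ii) rests on the same support observation in both proofs ($q^{(h')}$ lives on $C_{h'}$, $p^{(h)}$ vanishes there for $h\neq h'$), and part~(iii) is essentially identical. The paper's explicit block coordinates are reused heavily downstream---notably in Lemma~\ref{lem:L_K_M}(iv), which carves out sub-systems for the reducible case of Theorem~\ref{thm:eig}---so its longer argument earns its keep; your approach gives a cleaner stand-alone proof of Theorem~\ref{thm:A}. One small point worth tightening: you assert up front that there are exactly $\alpha_0$ closed classes and then invoke the dimension count, but as written the argument only gives $(\#\text{ closed classes})\leq\dim\ker L=\alpha_0$; the reverse inequality needs the observation that the restriction of $L$ to the transient indices has negative spectral bound and is therefore invertible, forcing any $v\in\ker L$ to vanish on the transient set and hence to lie in the span of the $q^{(h)}$.
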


We remark that all rows of $P$ and columns of $Q$ are the left and right eigenvectors of $L$, respectively.  Note that 
any autonomous system can be regarded as a periodic one
with the period being any given positive number. As a
straightforward consequence of our general result for periodic
systems (see Theorem \ref{thm:C} below), we have the following result on the limiting profiles of the spectral bound and basic reproduction ratio with small and large dispersal rate for autonomous patch models.

\begin{thmx}[]\label{thm:B}
	Assume that  {\rm (H1)} holds, $-V$ is an $n \times n $ cooperative matrix, and $F$ is an $n \times n $ nonnegative matrix. Let $P$ and $Q$ 
	be defined as in Theorem \ref{thm:A},  $\tilde{V}:= PVQ$ and $\tilde{F}:=PFQ$. Then the following statements are valid:
	\begin{itemize}
		\item[\rm (i)] $\lim\limits_{d \rightarrow 0^+}  s(d L -V +F)= 
		s(-V+F)$ and 
		$\lim\limits_{d \rightarrow +\infty} s(d L -V +F)= s(-\tilde{V}+\tilde{F})$.
		\item[\rm (ii)]  If, in addition, $s(dL-V)<0$ for all $d \geq 0$ and $s(\tilde{V})<0$, then $\lim\limits_{d \rightarrow 0^+} \mathcal{R}_0 (d)  =\mathcal{R}_0(0)$ and $\lim\limits_{d \rightarrow +\infty}  \mathcal{R}_0= \tilde{\mathcal{R}}_0$, where $\mathcal{R}_0(d):=r((V-dL)^{-1}F)$, $\forall d \geq 0$, and $\tilde{\mathcal{R}}_0:= r (\tilde{V}^{-1} \tilde{F})$.
	\end{itemize}
\end{thmx}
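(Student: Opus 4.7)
The plan is to deduce Theorem \ref{thm:B} as the special case of the general periodic result (Theorem \ref{thm:C}) in which all coefficients are taken to be constant, since any autonomous system is a $T$-periodic system for any $T > 0$. Under this identification the principal Floquet exponent of $\dot u = (dL - V + F) u$ coincides with $s(dL - V + F)$, the periodic next-generation operator reduces to $(V - dL)^{-1} F$, and the reduced $\alpha_0 \times \alpha_0$ periodic matrices of Theorem \ref{thm:C} collapse to the constants $-\tilde V + \tilde F$ and $\tilde V^{-1} \tilde F$, so the four limits in Theorem \ref{thm:B} fall out immediately.

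For a direct argument, both small-dispersal limits are immediate from continuity: $dL - V + F$ and $(V - dL)^{-1} F$ depend continuously on $d$ at $d = 0$ (with $V$ invertible because $s(-V) < 0$ by the hypothesis at $d = 0$), and the spectral bound of a cooperative matrix and the spectral radius of a nonnegative matrix are continuous in the matrix entries. For the large-dispersal limit of $s$, I would write $dL - V + F = d\bigl[L + d^{-1}(-V + F)\bigr]$ and apply matrix perturbation theory: by Theorem \ref{thm:A}, the zero eigenvalue of $L$ is semisimple of multiplicity $\alpha_0$ with spectral projection $\Pi = QP$, while the remaining eigenvalues $\mu_j$ of $L$ satisfy $\mathrm{Re}\,\mu_j < 0$ (the Gershgorin discs of $L$ lie in the closed left half-plane and touch the imaginary axis only at $0$). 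Lidskii/Kato-type perturbation theorems then split the eigenvalues of $dL + (-V + F)$ into $n - \alpha_0$ eigenvalues asymptotic to $d\mu_j \to -\infty$ in real part and $\alpha_0$ eigenvalues converging to those of the reduced matrix $P(-V + F) Q = -\tilde V + \tilde F$, giving $s(dL - V + F) \to s(-\tilde V + \tilde F)$.

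For the large-dispersal limit of $\mathcal{R}_0$, the same semisimple-eigenvalue decomposition yields $(V - dL)^{-1} \to Q \tilde V^{-1} P$ as $d \to \infty$: on $\mathrm{range}(\Pi)$ the resolvent converges to $Q\tilde V^{-1}P$ via the reduced operator $PVQ = \tilde V$, while on the complementary invariant subspace it decays like $d^{-1}$ since $L$ is invertible there with negative-real-part eigenvalues. Hence $(V - dL)^{-1} F \to Q \tilde V^{-1} P F$ in norm, and the cyclic identity $r(XY) = r(YX)$ applied with $X = Q \tilde V^{-1}$ and $Y = P F$ gives $r\bigl((V - dL)^{-1} F\bigr) \to r(\tilde V^{-1} P F Q) = r(\tilde V^{-1} \tilde F) = \tilde{\mathcal{R}}_0$. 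The main obstacle is precisely this last step: because $L$ is singular, $(V - dL)^{-1}$ does not collapse to $0$, so the spectral projection $\Pi = QP$ from Theorem \ref{thm:A} is needed to isolate the slow subspace on which $V$ acts effectively as $\tilde V$, and one must then transfer spectral information from the rectangular expression $Q\tilde V^{-1} PF$ back to the square $\alpha_0 \times \alpha_0$ matrix $\tilde V^{-1}\tilde F$ via the cyclic trick.
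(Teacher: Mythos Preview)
Your opening paragraph is exactly the paper's argument: Theorem~\ref{thm:B} is stated there as a ``straightforward consequence'' of Theorem~\ref{thm:C}, obtained by regarding the autonomous system as $T$-periodic for arbitrary $T>0$, so that the principal Floquet exponent becomes $s(dL-V+F)$ and the periodic next-generation operator collapses to $(V-dL)^{-1}F$.

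Your direct argument is a genuinely different and correct route. The paper reaches the large-$d$ limits only through the full periodic machinery of Section~\ref{sec:eig}: uniform boundedness of $\lambda_d^*$ (Lemma~\ref{lem:bounded:1}), the decomposition $\bm u_d=\tilde{\bm u}_d+\hat{\bm u}_d$ with $\hat{\bm u}_d\to 0$ via semigroup estimates on the complementary subspace (Lemma~\ref{lem:hat:ud}), Arzel\`a--Ascoli compactness to pass to the limit (Lemma~\ref{lem:eig:irreducible}), and a separate $\epsilon$-perturbation and block-decomposition argument to handle the reducible case (Theorem~\ref{thm:eig}); the $\mathcal R_0$ limit is then obtained from the eigenvalue limit via the abstract continuity Lemma~\ref{lem:R0:continuity}. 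You instead exploit that in the autonomous setting the problem is pure finite-dimensional linear algebra: the semisimplicity of the zero eigenvalue of $L$ (immediate from Theorem~\ref{thm:A}, since algebraic and geometric multiplicities both equal $\alpha_0$) together with your Gershgorin observation pins down the spectral splitting, and then Kato/Lidskii perturbation for the eigenvalues and a Schur-complement expansion of $(V-dL)^{-1}$ handle both large-$d$ limits with no dynamical detour. This is shorter and more elementary for the constant-coefficient case; the paper's route buys the periodic result at the same time and never needs the resolvent formula. One small slip: with $X=Q\tilde V^{-1}$ and $Y=PF$ the cyclic identity yields $r(YX)=r(\tilde F\tilde V^{-1})$, not $r(\tilde V^{-1}\tilde F)$; either apply it once more, or take $X=Q$, $Y=\tilde V^{-1}PF$ directly.
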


Note that  the additional conditions $s(dL-V)<0$ for all $d \geq 0$ and $s(\tilde{V})<0$ are used to guarantee that the associated basic reproduction ratios $\mathcal{R}_0(d)$ and $\tilde{\mathcal{R}}_0$ are well defined (see, e.g., \cite{van2002reproduction}), and 
$s(-\tilde{V}+\tilde{F})$ is independent of the choice of $P$ and $Q$ due to Theorem \ref{thm:A}. In the case where  $L$ is irreducible, the results in Theorem \ref{thm:B} were established in \cite{gao2019travel,gao2020fast,chen2020asymptoticJMB}.

To present our main result for time-periodic systems, we use $T>0$ to denote the period throughout this paper. Let $F(t)$ and 
$V(t)$ be two continuous  $n\times n$ matrix-valued functions of $t \in \mathbb{R}$ such that 
\begin{itemize}
	\item[(H2)] $F(t+T)=F(t)$, $V(t+T)=V(t)$,  $F(t)$ is nonnegative, and $-V(t)$ is cooperative for all $t \in \mathbb{R}$.
\end{itemize}

For any $t \in \mathbb{R}$,  let $\tilde{F}(t):=PF(t)Q$ and $\tilde{V}(t):=PV(t)Q$, where $P$ and $Q$ are defined as in Theorem \ref{thm:A}. For any $d \geq 0$, let $\{\Phi_{d}(t,s): t \geq s\}$ be the evolution family on $\mathbb{R}^n$ of 
$
\frac{\mathrm{d} \bm{v}}{\mathrm{d} t}=d L \bm{v} -V(t) \bm{v},
$ 
and let $\{\tilde{\Phi}(t,s): t \geq s\}$ be the evolution family on $\mathbb{R}^{\alpha_0}$ of 
$\frac{\mathrm{d} \bm{v}}{\mathrm{d} t}= - \tilde{V}(t) \bm{v}$ (see Definition \ref{def:evol}), where $\alpha_0$ is the algebraic multiplicity of the zero eigenvalue of $L$. 
Let $\omega(\Phi)$ be the exponential growth bound of an evolution family $\Phi$ (see Definition \ref{def:evol}). We further assume that 
\begin{itemize}
	\item[(H3)]  $\omega(\Phi_{d})<0$ for all $d \geq 0$ and $\omega(\tilde{\Phi})<0$.
\end{itemize}

For any $d \geq 0$, let $\lambda_{d}^{*}$ be the principal eigenvalue of
the periodic eigenvalue problem (see Definition \ref{def:principal} and Theorem \ref{thm:existence}): 
$$\frac{\mathrm{d} \bm{u}}{\mathrm{d} t}=d L \bm{u} -V(t) \bm{u} +F(t) \bm{u} - \lambda \bm{u}.$$
According to \cite{bacaer2006epidemic, wang2008threshold}, 
the basic reproduction ratio $\mathcal{R}_0(d)$ is well defined for 
the following periodic ODE system (see section \ref{sec:R0}):
\begin{equation}\label{equ:dLVF}
	\frac{\mathrm{d} \bm{v}}{\mathrm{d} t}=d L \bm{v} -V(t) \bm{v} +F(t) \bm{v}.
\end{equation}
In view of Theorem \ref{thm:A}, we see that  $-\tilde{V}(t)$ is cooperative for any $t \in \mathbb{R}$. Moreover, $\tilde{F}(t)$ is nonnegative  for any $t \in \mathbb{R}$.
Let $\tilde{\lambda}^{*}$ be the principal eigenvalue of  
the periodic eigenvalue problem:
$$\frac{\mathrm{d} \bm{u}}{\mathrm{d} t}= -\tilde{V}(t) \bm{u} + \tilde{F}(t) \bm{u} - \lambda \bm{u},$$
and $\tilde{\mathcal{R}}_0$ be the basic reproduction ratio of the following periodic equation (see section \ref{sec:R0}):
\begin{equation}\label{equ:tdLVF} 
	\frac{\mathrm{d} \bm{v}}{\mathrm{d} t}=-\tilde{V}(t) \bm{v} + \tilde{F}(t) \bm{v}.
\end{equation}
Then we have the following result on the asymptotic behavior of 
$\lambda_{d}^{*}$ and $\mathcal{R}_0(d)$ for periodic patch models.

\begin{thmx}[see Theorems \ref{thm:eig} and \ref{thm:R0}] \label{thm:C}
	Assume that {\rm (H1)--(H3)} hold.
	Then the  following statements are valid:
	\begin{itemize}
		\item[\rm (i)] $\lim\limits_{d \rightarrow 0^+} \lambda_{d}^{*}= 
		\lambda_{0}^{*}$ and 
		$\lim\limits_{d \rightarrow +\infty} \lambda_{d}^{*}= \tilde{\lambda}^{*}$.
		\item[\rm (ii)] $\lim\limits_{d \rightarrow 0^+}  \mathcal{R}_0(d)= \mathcal{R}_0(0)$ and $\lim\limits_{d \rightarrow +\infty}  \mathcal{R}_0(d)= \tilde{\mathcal{R}}_0$.
	\end{itemize}
\end{thmx}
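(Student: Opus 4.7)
My plan is to reduce both parts to the spectral theory of monodromy operators. For each $d \geq 0$, let $\Psi_d(t,s)$ denote the evolution family generated by $\dot{\mathbf{u}} = (dL - V(t) + F(t))\mathbf{u}$. Cooperativity of the coefficient together with (H2) makes $\Psi_d(T,0)$ (up to a shift) a nonnegative matrix whose Perron eigenvalue equals $e^{T\lambda_d^{*}}$; the analogous statement for the reduced equation $\dot{\mathbf{w}} = (-\tilde{V}(t)+\tilde{F}(t))\mathbf{w}$ uses Theorem~A(ii) to verify cooperativity of $-\tilde{V}(t)$ and yields $e^{T\tilde{\lambda}^{*}}$ as the Perron eigenvalue of the reduced monodromy $\tilde{\Psi}(T,0)$. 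The small-$d$ limit in part~(i) then follows immediately from continuous dependence of $\Psi_d(T,0)$ on $d$ and continuity of the Perron root on nonnegative matrices.

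The main obstacle is the large-$d$ limit, which I would treat as a singular perturbation built on the projection $\Pi := QP$ from Theorem~A. Since $PQ = I_{\alpha_0}$, $\Pi$ is a projection with $\mathrm{range}(\Pi) = \ker L$ and $\ker(\Pi) = \ker P$. Writing $\mathbf{u}(t) = Q\mathbf{w}(t) + \mathbf{z}(t)$ with $\mathbf{z}(t) \in \ker P$, applying $P$ and $I-\Pi$, and using $LQ = 0$, $PL = 0$, $PQ = I_{\alpha_0}$, yields the coupled system
\begin{align*}
\dot{\mathbf{w}} &= (-\tilde{V}(t) + \tilde{F}(t))\mathbf{w} + P(-V(t)+F(t))\mathbf{z},\\
\dot{\mathbf{z}} &= dL\mathbf{z} + (I-\Pi)(-V(t)+F(t))(Q\mathbf{w} + \mathbf{z}).
\end{align*}
Under (H1), every nonzero eigenvalue of $L$ has strictly negative real part (for large $c$, $L+cI$ is nonnegative with strictly positive diagonal, so its peripheral spectrum reduces to the Perron root $c$); hence $L|_{\ker P}$ is stable with spectral bound $-\mu_0<0$, giving $\|e^{dL\tau}|_{\ker P}\| \leq C e^{-d\mu\tau}$ for any $\mu<\mu_0$. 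Variation of parameters then produces $\|\mathbf{z}\|_\infty = O(1/d)$ after a brief $O(1/d)$ transient, and the $\mathbf{w}$-equation converges uniformly on $[0,T]$ to $\dot{\mathbf{w}} = (-\tilde{V}(t)+\tilde{F}(t))\mathbf{w}$, whose monodromy is $\tilde{\Psi}(T,0)$. Read in the block basis adapted to $\mathbb{R}^n = Q\mathbb{R}^{\alpha_0} \oplus \ker P$, the $(1,1)$-block of $\Psi_d(T,0)$ converges to $\tilde{\Psi}(T,0)$ while the $(2,2)$-block has spectral radius $O(e^{-d\mu T})$; hence the Perron root converges to $e^{T\tilde{\lambda}^{*}}$, i.e., $\lambda_d^{*} \to \tilde{\lambda}^{*}$. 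The technical crux is making the slaving uniform in $d$ and in the initial data; I would do this by constructing an attracting $T$-periodic slow manifold $\mathbf{z} = h_d(t,\mathbf{w}) = O(1/d)$ via a Banach fixed point argument in $C_T(\mathbb{R}^n)$ valid for large $d$, and then showing that the Perron eigenfunction of $\Psi_d(T,0)$ lies on this manifold to leading order.

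Part~(ii) I would derive from part~(i) through the Wang--Zhao characterization of $\mathcal{R}_0$ for periodic systems~\cite{wang2008threshold}. Under (H3), $\mathcal{R}_0(d) = 1/\sigma_d^{*}$, where $\sigma_d^{*}$ is the unique $\sigma > 0$ with $\lambda_d^{*}(\sigma) = 0$, and $\lambda_d^{*}(\sigma)$ denotes the principal eigenvalue of the problem with $\sigma F(t)$ in place of $F(t)$. The map $\sigma \mapsto \lambda_d^{*}(\sigma)$ is continuous and strictly increasing, passing from $\omega(\Phi_d) < 0$ at $\sigma = 0$ to $+\infty$ as $\sigma \to +\infty$; the same holds for the reduced problem, whose root is $\tilde{\sigma}^{*} = 1/\tilde{\mathcal{R}}_0$. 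Applying part~(i) uniformly in $\sigma$ on compact subsets of $(0,\infty)$ gives $\sigma_d^{*} \to \sigma_0^{*}$ as $d \to 0^+$ and $\sigma_d^{*} \to \tilde{\sigma}^{*}$ as $d \to +\infty$, whence the desired limits for $\mathcal{R}_0(d)$ follow upon taking reciprocals, well-posedness being ensured throughout by (H3).
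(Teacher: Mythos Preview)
Your proposal is essentially correct and rests on the same foundation as the paper: the projection $\Pi=QP$ onto $\ker L$ along $\ker P$, exponential stability of $L$ on $\ker P$, and the Wang--Zhao sign relation $\mathrm{sgn}(\mathcal{R}_0-\mu)=\mathrm{sgn}\,\omega(\mathbb{U}^\mu)$ for part~(ii). The small-$d$ limit and part~(ii) are handled in the paper exactly as you describe (continuous dependence of the monodromy, then Lemma~2.5 playing the role of your implicit-function step for $\sigma_d^{*}$).

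Where you genuinely diverge is the large-$d$ endgame in part~(i). The paper works with the \emph{eigenfunction}: it shows $\hat{\bm u}_d:=(I-\Pi)\bm u_d\to 0$ (Lemma~3.3), extracts a subsequential limit $(\lambda_\infty,\tilde{\bm u}_\infty)$ satisfying the reduced eigenvalue problem, and then must identify $\lambda_\infty$ with $\tilde\lambda^{*}$. When $\tilde O(T,0)$ is irreducible this follows from Perron--Frobenius (Lemma~3.4); the reducible case requires an additional three-step argument (Theorem~3.1(ii)) with an $\epsilon$-perturbation for the upper bound and a block decomposition via Lemma~2.3 for the lower bound. Your route---prove that the full monodromy $\Psi_d(T,0)$, written in the $(\mathbf w,\mathbf z)$ block basis, converges as a matrix to $\mathrm{diag}(\tilde\Psi(T,0),0)$ and then invoke continuity of eigenvalues---sidesteps this case distinction entirely, since $\tilde\Psi(T,0)$ is invertible and hence $r(\tilde\Psi(T,0))>0$ regardless of irreducibility. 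This is a cleaner packaging.

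One correction: the $(2,2)$-block is not $O(e^{-d\mu T})$ as you state. Starting from $\mathbf w(0)=0$, $\mathbf z(0)=\mathbf z_0$, the fast transient in $\mathbf z$ produces $\mathbf w(t_1)=O(1/d)$ (integrating $e^{-d\mu s}$), which then feeds back into $\mathbf z$ as a bounded forcing, leaving $\mathbf z(T)=O(1/d)$ rather than exponentially small. This does not damage the argument---all off-diagonal and $(2,2)$ blocks are $O(1/d)\to 0$, which is all you need---but the slow-manifold refinement you sketch at the end would be the right tool if you wanted sharper control. Also, for part~(ii) you do not actually need uniform-in-$\sigma$ convergence on compacta or strict monotonicity of $\sigma\mapsto\lambda_d^{*}(\sigma)$: pointwise convergence for each fixed $\mu=1/\sigma$ plus the Wang--Zhao sign dichotomy (the paper's Lemma~4.1, packaged abstractly as Lemma~2.5) already forces $\mathcal R_0(d)\to\tilde{\mathcal R}_0$, including the degenerate case $\mathcal R_0=0$.
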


We should point out that $\tilde{\lambda}^{*}$ is independent of the choice of $P$ and $Q$ (see Lemma \ref{lem:PQO}). The statements
(i) and (ii) in	Theorem \ref{thm:C} are straightforward consequences of Theorems \ref{thm:eig} and \ref{thm:R0}, respectively. 
In Theorem \ref{thm:R0},  we also introduce a metric space of parameters to discuss the continuity of the basic reproduction ratio with respect to parameters.

Since  the Poincar\'e (period)  map of system \eqref{equ:dLVF}, which is a square matrix, is continuous with respect to the dispersal rate $d \in[0,+\infty)$, so is the principal eigenvalue due to the standard matrix perturbation theory.  To obtain the limiting profile of the principal eigenvalue as the dispersal rate goes to infinity,  we distinguish two cases.
In the case where the Poincar\'e  map (matrix) of \eqref{equ:tdLVF}  is irreducible,  we use some ideas inspired  by \cite{hale1986large,hale1987varying,hale1989shadow,hutson2001evolution,zhang2020asymptotic},  where the asymptotic behavior of the positive steady states or periodic solutions was derived for large diffusion coefficients. In the case where  such a matrix is reducible, we combine  the perturbation technique and  the results for appropriate subsystems such that the  Poincar\'e  maps of the associated limiting systems are irreducible.  In our recent paper \cite{zhang2020asymptotic}, we established the continuity of the basic reproduction ratio with respect to parameters under the setting of Thieme \cite{thieme2009spectral}, which enables us to reduce the  limiting profile of the basic reproduction ratio to the asymptotic behavior of the principal eigenvalue of the
associated periodic eigenvalue problem with parameters. In the current paper, we give a more general result in this regard and then use  it to prove Theorem \ref{thm:C} (ii).

The remaining part of this paper is organized as follows. 
In the next section, we present some basic properties of cooperative matrices and prove a general result in order to study the continuity of the basic reproduction ratio with respect to parameters. In section \ref{sec:eig}, we study the asymptotic behavior of the principal eigenvalue for periodic cooperative ODE systems with large dispersal rate. In section \ref{sec:R0}, we prove the continuity of the basic reproduction ratio with respect to the dispersal rate and investigate the limiting profile of the basic reproduction ratio as the dispersal rate goes to infinity.
As an  illustrative example, we also
apply these analytic results to a periodic Ross-Macdonald patch model.

\section{Preliminaries}

In this section, we present some properties of cooperative matrices and prove a general result in order to study the continuity of the basic reproduction ratio with respect to parameters.
Throughout the whole paper, we denote $\bm{0}=(0,\cdots,0)^T$ in the case of any finite dimension. Moreover, without ambiguity, $0$ refers to zero matrix. 
\begin{lemma}\label{lem:L_K:0}
	Assume that {\rm (H1)} holds and $L$ can be split into a block lower triangular matrix 
	$$
	L=
	\left(
	\begin{array}{ccc}
		L_{11}& \cdots & L_{1\alpha} \\ 
		\vdots& \ddots& \vdots\\
		L_{\alpha 1}& \cdots & L_{\alpha \alpha}\\
	\end{array} 
	\right)
	$$
	such that $L_{hh}$ is an $n_h \times n_h $ irreducible matrix for $1 \leq h  \leq \alpha$ with $\sum_{l=1}^{\alpha} n_l=n$, and $L_{hl}=0$ for $1 \leq h < l \leq \alpha$.  Then for any fixed $1 \leq l \leq \alpha$,  
	$s(L_{ll})=0$ if $L_{hl}=0$ for all $1 \leq l < h \leq \alpha$,  and $s(L_{ll})<0$ if otherwise.  Equivalently, $L_{hl}=0$  for all $1 \leq h \neq l \leq \alpha$ if $s(L_{ll})=0$, and there is some $h_0 \neq l$ such that $L_{h_0l}$ is a nonzero matrix if otherwise.
\end{lemma}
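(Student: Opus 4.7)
The plan is to exploit the zero column sum hypothesis of $L$ block by block, combined with Perron--Frobenius theory applied to the irreducible cooperative block $L_{ll}$. First I would examine any single scalar column $k$ inside the $l$-th block column. Because $L$ is block lower triangular, the only contributions to this column come from the blocks $L_{hl}$ with $h \geq l$, so writing $c_k$ for the column sum of that column inside $L_{ll}$ and $s_{hk}$ for the column sum inside $L_{hl}$ ($h>l$), condition {\rm (H1)} gives $c_k + \sum_{h>l} s_{hk} = 0$. Since the blocks $L_{hl}$ with $h>l$ lie entirely off the diagonal of $L$, cooperativity forces all their entries, and hence each $s_{hk}$, to be nonnegative, so $c_k \leq 0$ for every column $k$ of $L_{ll}$.

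Next I would split into two cases. If $L_{hl}=0$ for every $h>l$, then each $c_k$ is exactly zero, so $\mathbf{1}^T L_{ll} = \mathbf{0}^T$; this immediately gives $0$ as an eigenvalue of $L_{ll}$, whence $s(L_{ll}) \geq 0$. The reverse inequality $s(L_{ll}) \leq \max_k c_k = 0$ follows from the standard column-sum upper bound on the spectral bound of a cooperative matrix (apply the Perron--Frobenius maximum-column-sum bound to $L_{ll}+\gamma I$ for $\gamma$ large and let the shift cancel), so $s(L_{ll})=0$.

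If instead some $L_{h_0 l}$ is a nonzero matrix, then at least one $s_{h_0 k_0}$ is strictly positive and hence $c_{k_0} < 0$ while $c_k \leq 0$ for the remaining columns. Suppose toward a contradiction that $s(L_{ll})=0$. By Perron--Frobenius for irreducible cooperative matrices, there is a strictly positive right eigenvector $\mathbf{u}$ with $L_{ll}\mathbf{u} = 0$. Left-multiplying by $\mathbf{1}^T$ yields $0 = \mathbf{1}^T L_{ll}\mathbf{u} = \sum_k c_k u_k$, but the right side is strictly negative because $c_{k_0} u_{k_0} < 0$ and every other summand is nonpositive, a contradiction. Therefore $s(L_{ll}) < 0$ in this case.

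The ``equivalent'' restatement is obtained by contrapositive once one observes that the block row index set $\{h : 1\le h \le \alpha,\ h \ne l\}$ decomposes into $\{h < l\}$, where $L_{hl} = 0$ by the block lower triangular structure, and $\{h > l\}$, which is exactly the set the dichotomy above is phrased on. The main technical point is the passage from ``some $L_{h_0 l}\neq 0$'' to the strict inequality $s(L_{ll}) < 0$, for which the positive Perron eigenvector is essential; the rest is bookkeeping with column sums and an invocation of the column-sum bound for cooperative matrices.
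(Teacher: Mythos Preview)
Your argument is correct and follows essentially the same route as the paper: both use the zero-column-sum hypothesis to show that the column sums of $L_{ll}$ are nonpositive (strictly negative in at least one column when some below-diagonal block $L_{hl}$ is nonzero), and then appeal to irreducibility via Perron--Frobenius. The paper compresses your Case~2 into a single citation of \cite[Theorem II.1.11]{berman1994nonnegative}, whereas you argue explicitly with the positive Perron eigenvector.

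One minor slip in Case~2: your contradiction only rules out $s(L_{ll})=0$, yet you conclude $s(L_{ll})<0$. To close this, either note that the column-sum bound you invoked in Case~1 equally gives $s(L_{ll})\le\max_k c_k\le 0$ here, or begin the contradiction with ``suppose $s(L_{ll})\ge 0$'' so that the Perron eigenvector satisfies $L_{ll}\mathbf{u}=s(L_{ll})\mathbf{u}$ and hence $\mathbf{1}^T L_{ll}\mathbf{u}=s(L_{ll})\,\mathbf{1}^T\mathbf{u}\ge 0$, which already contradicts $\sum_k c_k u_k<0$.
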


\begin{proof}
	Let $\bm{e}=(1,\cdots,1)^T$ and $\bm{e}_l=(1,\cdots,1)^T$ be $n$ and $n_l$-dimensional vectors for any $1 \leq  l \leq \alpha$, respectively. It is easy to see that $\bm{e}^T L= \bm{0}^T$. For a fixed $1 \leq l_0 \leq \alpha$, an easy computation yields  that 
	$$
	\sum_{h =1}^{\alpha}(\bm{e}_h)^T L_{h l_0}=\sum_{h =l_0}^{\alpha}(\bm{e}_h)^T L_{h l_0} =\bm{0}^T.
	$$  
	If $L_{hl_0}$ is a zero matrix for all $1 \leq l_0 < h \leq \alpha$, then $(\bm{e}_{l_0})^T L_{l_0l_0} = \bm{0}^T$. This implies that $s(L_{l_0l_0})=0$.
	If otherwise, by the irreducibility of $L_{l_0l_0}$, we conclude that $s(L_{l_0l_0})<0$ due to \cite[Theorem II.1.11]{berman1994nonnegative}.
\end{proof}

\begin{itemize}
	\item[(H1)$'$] $L=(l_{ij})_{n \times n} $ is an $n\times n$ cooperative matrix with $s(L)=0$, and $L$ can be split into a block lower triangular matrix 
	$$
	L=
	\left(
	\begin{array}{ccc}
		L_{11}& \cdots & L_{1\alpha} \\ 
		\vdots& \ddots& \vdots\\
		L_{\alpha 1}& \cdots & L_{\alpha \alpha}\\
	\end{array} 
	\right)
	$$
	such that  $L_{hh}$ is an $n_h \times n_h $ irreducible matrix for $1 \leq h  \leq \alpha$  with $\sum_{l=1}^{\alpha} n_l=n$, $L_{hl}=0$ for $1 \leq h < l \leq \alpha$, and $L_{hl}=0$  for all $l \in \Lambda_0$ and $1\leq h \leq \alpha$ with $h \neq l$, where 
	$$\Lambda_0:=\{1 \leq l \leq \alpha: s(L_{ll})=0 \}, \text{ and } \Lambda_0^{c}:=\{1 \leq l \leq \alpha: s(L_{ll})<0 \}.$$ 
	Let $\alpha_0$ and $\alpha_0^c$ denote the number of all elements in $\Lambda_0$ and $\Lambda_0^{c}$, respectively.
\end{itemize}

In the use of Lemma  \ref{lem:L_K:0},  we choose $\alpha=1$ if
$L$ is irreducible, and write $L$ 
as such a block lower triangular matrix  via a permutation  if $L$ is reducible. Accordingly, Lemma \ref{lem:L_K:0} implies that 
(H1) is sufficient for (H1)$'$ to hold.

\begin{lemma}\label{lem:L_K}
	Assume that {\rm (H1)$'$} holds. Let $\bm{\nu}$ be an $\alpha_0$-dimensional vector defined by
	$
	\bm{\nu}=(\nu_1,\cdots,\nu_{\alpha_0})^T
	$
	with $\nu_l= \alpha_0^c +l$, $\forall 1 \leq l \leq \alpha_0$. Then the following statements are valid:
	\begin{itemize}
		\item[\rm (i)] 
		If $\Lambda_0^c \neq \emptyset$, then $\Lambda_0^c=\{1,\cdots,\alpha_0^c\}$ and $\Lambda_0=\{\alpha_0^c+1,\cdots,\alpha\}$, via a permutation. 
		\item[\rm (ii)] The algebraic multiplicity of the zero eigenvalue of $L$ is $\alpha_0$, and there exist $\alpha_0$ linearly independent left positive eigenvectors $(\bm{p}_l)^T:=((\bm{p}_l^1)^T,\cdots,(\bm{p}_l^\alpha)^T)$, $1 \leq l \leq \alpha_0$ of $L$ and right positive eigenvectors $\bm{q}_l=((\bm{q}_l^1)^T,\cdots,(\bm{q}_l^\alpha)^T)^T$, $1 \leq l \leq \alpha_0$ of $L$ corresponding to $0$ such that $(\bm{p}_l )^T\bm{q}_h=\delta_{lh}$ for $1 \leq l, h \leq \alpha_0$, where $\bm{p}_l^i$ and $\bm{q}_l^i$ are $n_i$-dimensional vectors and $\delta_{lh}$ denotes the Kronecker delta function(that is, $\delta_{lh}=1$ if $l=h$ and $\delta_{lh}=0$ if otherwise). Moreover,  $\bm{p}_l^{\nu_l} \gg \bm{0}$, $\bm{q}_l^{\nu_l} \gg \bm{0}$, $L_{\nu_l\nu_l}\bm{q}_l^{\nu_l}=\bm{0}$ and $(\bm{p}_{l}^{\nu_l})^TL_{\nu_l\nu_l}=\bm{0}^T$, $\forall 1 \leq l \leq \alpha_0$.
		\item[\rm (iii)] Define
		$$		
		P:=
		\left(\begin{matrix}
			\bm{p}_1,
			\cdots,
			\bm{p}_{\alpha_0}
		\end{matrix}
		\right)^T
		\text{ and }
		Q:=
		\left(\begin{matrix}
			\bm{q}_1,
			\cdots,
			\bm{q}_{\alpha_0}
		\end{matrix}
		\right).
		$$
		Then $PQ$ is an $\alpha_0 \times \alpha_0$ identity matrix. Moreover, $PL=0$ and $LQ=0$.
	\end{itemize}
	
\end{lemma}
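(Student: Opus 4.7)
The plan is to exploit the block structure forced by (H1)$'$ in order to reduce everything to Perron--Frobenius analysis of a block-diagonal matrix plus a harmless off-diagonal correction.

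For part (i), the extra hypothesis in (H1)$'$ says that every column block whose index lies in $\Lambda_0$ has \emph{all} off-diagonal block entries equal to zero. Consequently, if I relabel the block indices so that those in $\Lambda_0^c$ come first (in their original relative order) and those in $\Lambda_0$ come last, the upper-right rectangular block becomes zero, while the lower-right block, being supported on $\Lambda_0$-columns alone, is block-diagonal with diagonal blocks $L_{\nu_l\nu_l}$, $1 \le l \le \alpha_0$. The upper-left portion remains block lower triangular with the same diagonal blocks $L_{hh}$, $h \in \Lambda_0^c$. Hence after this permutation $L$ has the form $\begin{pmatrix} A & 0 \\ B & D \end{pmatrix}$, where $s(A) < 0$ (since $A$ is block lower triangular with $s(L_{hh}) < 0$ for each diagonal block) and $D = \mathrm{diag}(L_{\nu_1\nu_1},\dots,L_{\nu_{\alpha_0}\nu_{\alpha_0}})$.

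For part (ii), since $\det(\lambda I - L) = \det(\lambda I - A)\det(\lambda I - D)$ and $A$ has no zero eigenvalue, the algebraic multiplicity of $0$ in $L$ equals that in $D$. Each $L_{\nu_l\nu_l}$ is irreducible cooperative with $s = 0$; Perron--Frobenius applied to $L_{\nu_l\nu_l} + cI$ for $c$ large shows $0$ is a simple eigenvalue of $L_{\nu_l\nu_l}$ with strictly positive right and left Perron eigenvectors $\bm{\phi}_l \gg \bm{0}$ and $\bm{\psi}_l \gg \bm{0}$, which I normalize so that $\bm{\psi}_l^T \bm{\phi}_l = 1$. Summing over $l$ gives algebraic multiplicity exactly $\alpha_0$. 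I then define $\bm{q}_l$ to be zero on every block other than $\nu_l$ and equal to $\bm{\phi}_l$ on block $\nu_l$; by the block structure $L\bm{q}_l = \bm{0}$. For the left eigenvector I set $\bm{p}_l^{\nu_l} = \bm{\psi}_l$, $\bm{p}_l^{\nu_h} = \bm{0}$ for $h \ne l$, and determine the $\Lambda_0^c$-component $(\bm{p}_l^{(1)})^T$ by solving $(\bm{p}_l^{(1)})^T A + \bm{\psi}_l^T B_l = \bm{0}^T$, where $B_l$ is the row-block of $B$ indexed by $\nu_l$. Nonnegativity of $\bm{p}_l^{(1)}$ follows because $-A$ is a nonsingular $M$-matrix (cooperative with $s(A) < 0$), so $-A^{-1} \ge 0$, while $\bm{\psi}_l^T B_l \ge \bm{0}^T$. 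The biorthogonality $\bm{p}_l^T \bm{q}_h = \delta_{lh}$ is immediate from the supports and the normalization, and linear independence of each family is forced by the strict positivity on distinct $\nu_l$-blocks.

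Part (iii) is then a direct repackaging: the rows of $P$ are the $\bm{p}_l^T$ and the columns of $Q$ are the $\bm{q}_l$, so $PL = 0$, $LQ = 0$, and $(PQ)_{lh} = \bm{p}_l^T \bm{q}_h = \delta_{lh}$. The main obstacle is really the left-eigenvector step in (ii): one has to verify that the $\Lambda_0^c$-component is both uniquely determined and nonnegative, which rests on recognizing $-A$ as a nonsingular $M$-matrix via the cooperative structure and $s(A) < 0$ (invoking, e.g., \cite{berman1994nonnegative}). Once this reduction to the form $\begin{pmatrix} A & 0 \\ B & D \end{pmatrix}$ has been made, the remainder is careful bookkeeping on the block indices.
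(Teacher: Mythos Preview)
Your proposal is correct and follows essentially the same blueprint as the paper: permute the blocks so that $\Lambda_0^c$ precedes $\Lambda_0$, build the right eigenvectors $\bm q_l$ supported only on block $\nu_l$ via the Perron eigenvector of $L_{\nu_l\nu_l}$, build the left eigenvectors $\bm p_l$ by fixing the $\Lambda_0$-components the same way and solving a linear system for the $\Lambda_0^c$-components, and read off biorthogonality and (iii) from the supports. The paper carries out exactly these steps, writing the linear system for the $\Lambda_0^c$-components of $\bm p_l$ explicitly (its equation for $\sum_i (\bm p_l^i)^T L_{ih}=\bm 0^T$) and invoking $s(\tilde L)<0$ together with cooperativity to get existence and nonnegativity, which is precisely your $M$-matrix argument phrased componentwise.

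The one genuine difference is the algebraic-multiplicity step. You use the block lower triangular form $\begin{pmatrix}A&0\\B&D\end{pmatrix}$ to factor the characteristic polynomial as $\det(\lambda I-A)\det(\lambda I-D)$, observe that $A$ has no zero eigenvalue, and invoke simplicity of the Perron eigenvalue for each $L_{\nu_l\nu_l}$ to conclude the multiplicity is $\alpha_0$. The paper instead proves two claims characterizing $\ker L$ and $\ker L^m$ directly: it shows by induction on the block index that any $\bm q$ with $L\bm q=\bm 0$ (or $L^m\bm q=\bm 0$) must vanish on the $\Lambda_0^c$-blocks and satisfy $L_{ll}\bm q^l=\bm 0$ on the $\Lambda_0$-blocks, which pins down both the geometric and algebraic multiplicity. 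Your determinant route is shorter and entirely sufficient for the lemma as stated; the paper's route additionally yields an explicit description of the kernel, but that extra information is not used elsewhere in the argument.
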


\begin{proof}
	(i) can be derived by a permutation due to (H1)$'$.
	
	(ii) We only consider the case of $\alpha_0^c >0$, since the case of $\alpha_0^c =0$ can be obtained similarly.
	For any $1 \leq l \leq \alpha_0$, choose $\bm{q}^{\nu_l} \gg 0$ such that $L_{\nu_l\nu_l}\bm{q}^{\nu_l}=\bm{0}$, and define 
	$$
	\bm{q}_l:=((\bm{q}_l^1)^T,\cdots,(\bm{q}_l^\alpha)^T)^T,
	$$ where $\bm{q}_l^i$ is an $n_i$-dimensional vector,  $\bm{q}_l^{\nu_l}=\bm{q}^{\nu_l}$, and $\bm{q}_l^i=\bm{0}$ if $i \neq \nu_l$.  This implies that $L\bm{q}_l=0$ for any $1 \leq l \leq  \alpha_0$.
	
	For any $1 \leq l \leq \alpha_0$,  choose $\bm{p}^{\nu_l} \gg 0$ such that $(\bm{p}^{\nu_l})^T L_{\nu_l\nu_l}=\bm{0}^T$  with $(\bm{p}^{\nu_l})^T\bm{q}^{\nu_l}=1$.
	Define 
	$$
	\bm{p}_l:=((\bm{p}_l^1)^T,\cdots,(\bm{p}_l^\alpha)^T)^T,
	$$ where $\bm{p}_l^i$ is an $n_i$-dimensional vector,  $\bm{p}_l^{\nu_l}=\bm{p}^{\nu_l}$, and $\bm{p}_l^i=\bm{0}$ if $i \in \Lambda_0$ with $i \neq \nu_l$, and $\bm{p}_l^i$ is solved by the following equations if $i \in \Lambda_0^c$.
	\begin{equation}\label{equ:p:Lhl}
		\sum_{i =1}^\alpha (\bm{p}_l^i)^T L_{ih}=\bm{0}^T, ~1 \leq  h  \leq \alpha. 
	\end{equation}
	This is equivalent to 
	$$
	\sum_{i \in \Lambda_0^c} (\bm{p}_l^i)^T L_{ih}=-\sum_{i \in \Lambda_0} (\bm{p}_l^i)^T L_{ih},~ 1 \leq  h  \leq \alpha. 
	$$
	Notice that the coefficient matrix $$\tilde{L}=\left(
	\begin{array}{ccc}
		L_{11}& \cdots & L_{1\alpha_0^c} \\ 
		\vdots& \ddots& \vdots\\
		L_{\alpha_0^c1}& \cdots & L_{\alpha_0^c\alpha_0^c}\\
	\end{array} 
	\right)$$ of the above equations is a block lower triangular matrix whose diagonal elements are $L_{ii}$ for $i \in \Lambda_0^c$. It then follows that $s(\tilde{L})= \max_{i \in \Lambda_0^c}s(L_{ii})<0$, and hence, \eqref{equ:p:Lhl} adimits a unique solution. Moreover, $\bm{p}_l^i$ is nonnegative for $i \in \Lambda_0^c$ since $\tilde{L}$ is cooperative. Thus, $\bm{p}_l^T L =\bm{0}^T$, $\forall 1 \leq  l \leq \alpha_0$.	
	
	In view of the above arguments, it easily follows that the algebraic multiplicity of the zero eigenvalue of $L$ is no less than $\alpha_0$.	To obtain the converse statement, it suffices to prove the following two claims.
	
	{\it Claim 1.} If $L\bm{q}=\bm{0}$ with $\bm{q}=((\bm{q}^1)^T,\cdots,(\bm{q}^\alpha)^T)^T$, where $\bm{q}^l$ is an $n_l$-dimensional vector, then $\bm{q}^{l}=\bm{0}$ for $l \in \Lambda_0^c$ and $L_{ll}\bm{q}^l=\bm{0}$ for $l \in \Lambda_0$.
	
	{\it Claim 2.} If $L^m\bm{q}=\bm{0}$ for some $m>1$ with $\bm{q}=((\bm{q}^1)^T,\cdots,(\bm{q}^\alpha)^T)^T$, where $\bm{q}^l$ is an $n_l$-dimensional vector, then $\bm{q}^{l}=\bm{0}$ for $l \in \Lambda_0^c$ and $L_{ll}\bm{q}^l=\bm{0}$ for $l \in \Lambda_0$.
	
	Let us postpone the proof of these claims, and complete the proof in a few lines.  
	By the irreducibility of $L_{ll}$, it then follows from Claim 1 that $\bm{q}$ is a linear combination of $\{\bm{q}_l \}_{1 \leq l \leq \alpha }$ for any $\bm{q}$ with $L\bm{q}=\bm{0}$, and hence, the geometric multiplicity of the zero eigenvalue of $L$ is no more than $\alpha_0$. Similarly, it follows from Claim 2 that the algebraic multiplicity of the zero eigenvalue of $L$ is no more than $\alpha_0$. Thus, the desired conclusion holds. 
	
	We now return to the proof of Claim 1, and first show that $\bm{q}^{l}=\bm{0}$ for all $l \in \Lambda_0^c$ by the induction method.
	It is easy to see that $L_{11}\bm{q}^1=\bm{0}$. Thus, $s(L_{11})<0$ implies that $\bm{q}^1=\bm{0}$.  Assume that $\bm{q}^{l}=\bm{0}$ for $1 \leq l \leq l_0$ with ${l_0} \in \Lambda_0^c$, it suffices to prove $\bm{q}^{{l_0}+1}=\bm{0}$ if $l_0 + 1 \in \Lambda_0^c$. In view of $L\bm{q}=\bm{0}$, we have
	$$
	L_{({l_0}+1)({l_0}+1)}\bm{q}^{{l_0}+1}
	=\sum_{i=1}^{\alpha} L_{({l_0}+1) i} \bm{q}^{i}
	=\bm{0},
	$$
	due to $L_{({l_0}+1) i }=0$ if $i > l_0 +1$, and $\bm{q}^{i}=\bm{0}$ if $i < l_0 +1$. Thus, $\bm{q}^{l}=\bm{0}$ for all $l \in \Lambda_0^c$. By (H1)$'$, $L_{hl}=0$ if $ h,l \in \Lambda_0$ with $h \neq l$.
	It then follows that $$L_{ll}\bm{q}^l=\sum_{i =1}^{\alpha}L_{li}\bm{q}^i=\bm{0},~l \in \Lambda_0.$$
	
	We next verify Claim 2. Since $L$ is a block lower triangular matrix, $\mathrm{diag}(L_{11}^m,\cdots,L_{\alpha \alpha}^m)$ is the block diagonal of $L^m$. 
	In view of  Claim 1, we have $\bm{q}^{l}=\bm{0}$ for $l \in \Lambda_0^c$ and $L_{ll}^{m}\bm{q}^l=\bm{0}$ for $l \in \Lambda_0$.
	Thus, the irreducibility of $L_{ll}$ implies that $L_{ll}\bm{q}^l=\bm{0}$.
\end{proof}

In view of Lemma \ref{lem:L_K}, we observe  that $\alpha_0$ is 
not only the number of the elements in $\Lambda_0$, but also the algebraic multiplicity of the zero eigenvalue of $L$. In the rest of 
this paper, we use the same notations $\bm{\nu}$, $\bm{p}_l$, $\bm{q}_l$, $P$ and $Q$ as in Lemma \ref{lem:L_K}.

\begin{lemma}\label{lem:L_K_M}
	Assume that {\rm (H1)$'$} holds,  $\Lambda_0^c=\{1,\cdots,\alpha_0^c\}$ and $\Lambda_0=\{\alpha_0^c+1,\cdots,\alpha\}$ whenever $\Lambda_0^c \neq \emptyset$. Let $M$ be a cooperative matrix such that
	$$
	M=
	\left(
	\begin{array}{ccc}
		M_{11}& \cdots & M_{1\alpha} \\ 
		\vdots& \ddots& \vdots\\
		M_{\alpha 1}& \cdots & M_{\alpha \alpha}\\
	\end{array} 
	\right),
	$$
	where $M_{hl}$ is an $n_h \times n_l $ matrix for $1 \leq h,l  \leq \alpha$. Then the following statements are valid:
	\begin{itemize}
		\item[\rm (i)] $PMQ$ is cooperative.
		\item[\rm(ii)] Let $\bm{b}$ be an $\alpha_0$-dimensional vector defined by
		$
		\bm{b}=(b_1,\cdots,b_{\alpha_0})^T
		$
		with $1 \leq b_i \leq \alpha_0$ and $b_i \neq b_j$ if $i \neq j$.
		Define a matrix $\tilde{M}:=(\tilde{m}_{hl})_{\alpha_0 \times \alpha_0}$ by
		$\tilde{m}_{hl}=\bm{p}_{b_h}^T M \bm{q}_{b_l}$.  Then $\tilde{M}$ is similar to $PMQ$, via a permutation. 
		If $\tilde{M}$ is reducible, then $\tilde{M}$, via exchanging the order of the components of $\bm{b}$, can be split into
		$$
		\tilde{M}=
		\left(
		\begin{array}{ccc}
			\tilde{M}_{11}& \cdots & \tilde{M}_{1\tilde{n}} \\ 
			\vdots& \ddots& \vdots\\
			\tilde{M}_{\tilde{n}1}& \cdots & \tilde{M}_{\tilde{n}\tilde{n}}\\
		\end{array} 
		\right),
		$$
		where $\tilde{M}_{ii}$ is an $\alpha_i \times \alpha_i$ irreducible matrix for all $1 \leq i \leq \tilde{n}$ with $\sum_{i=1}^{\tilde{n}} \alpha_i =\alpha_0$, and $\tilde{M}_{ij}=0$ for all $1 \leq i < j \leq \tilde{n}$.
		\item[\rm (iii)] Let $\bm{b}=((\bm{b}^1)^T,\cdots,(\bm{b}^{\tilde{n}})^T)^T=(b_1,\cdots,b_{\alpha_0})^T$, where $\bm{b}^i=(b^i_1,\cdots,b^i_{\alpha_i})^T$. Then $\tilde{M}_{ii}$ is still an $\alpha_i \times \alpha_i$ irreducible matrix for all $1 \leq i \leq \tilde{n}$ with $\sum_{i=1}^{\tilde{n}} \alpha_i =\alpha_0$, and $\tilde{M}_{ij}=0$ for all $1 \leq i < j \leq \tilde{n}$ by exchanging the order of the components of $\bm{b}^i$ such that $b^i_1< \cdots < b^i_{\alpha_i}$.
		\item[\rm (iv)] For any $1 \leq i \leq \tilde{n}$, let
		$$
		\tilde{\nu}^{i}_{j}:=
		\begin{cases}
			j,&1 \leq j \leq \alpha_0^c,\\
			\alpha_0^c +b^{i}_{j-\alpha_0^c}, & 1+ \alpha_0^c \leq j \leq \alpha_0^c +\alpha_i,
		\end{cases}
		$$
		if $\alpha_0^c>0$ and $\tilde{\nu}^{i}_{j}:=b^{i}_{j-\alpha_0^c}$, $1 \leq j \leq \alpha_i$, if $\alpha_0^c=0$,
		and  define
		$$\bm{p}_{l,i}:=((\bm{p}_{l,i}^{1})^T,\cdots,(\bm{p}_{l,i}^{\alpha_i})^T)^T,~ \bm{q}_{l,i}:=((\bm{q}_{l,i}^{1})^T,\cdots,(\bm{q}_{l,i}^{\alpha_i})^T)^T,$$
		$L^{i}:=(L_{hl}^i)_{(\alpha_0^c+\alpha_i)\times (\alpha_0^c+\alpha_i)}$, $M^{i}:=(M_{hl}^i)_{(\alpha_0^c+\alpha_i)\times (\alpha_0^c+\alpha_i)}$, and
		$\tilde{M}^{i}:=(\tilde{m}_{hl}^{i})_{\alpha_i \times \alpha_i}$ by
		$$
		\bm{p}_{l,i}^{j}=	\bm{p}_{b_{l}^{i}}^{\tilde{\nu}_{j}^{i}},
		~\bm{q}_{l,i}^{j}=	\bm{q}_{b_{l}^{i}}^{\tilde{\nu}_{j}^{i}},~ 1 \leq j \leq \alpha_0^c+ \alpha_i,~ 1\leq l \leq \alpha_i,
		$$
		$$
		L_{hl}^{i}= L_{\tilde{\nu}^{i}_h \tilde{\nu}^{i}_l},~M_{hl}^{i}= M_{\tilde{\nu}^{i}_h \tilde{\nu}^{i}_l}, ~1 \leq h, l \leq \alpha_0^c +\alpha_i.
		$$
		and 
		$$
		\tilde{m}_{hl}^{i}=\bm{p}_{h,i}^T M^i \bm{q}_{l,i}, ~1 \leq h, l \leq \alpha_i.
		$$
		Then for any   $1 \leq i \leq \tilde{n}$, we have $\tilde{M}_{ii}=\tilde{M}^{i}$, and for any $1\leq l \leq \alpha_i$,
		$$
		\bm{p}_{l,i}^T\bm{q}_{l,i}=1,~ \bm{p}_{l,i}^T\bm{q}_{h,i}=0,~ h\neq l,~
		L^{i}\bm{q}_{l,i}=\bm{0}, \text{ and }
		\bm{p}_{l,i}^T L^{i}=\bm{0}^T.
		$$
	\end{itemize}
	
\end{lemma}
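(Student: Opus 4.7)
The plan is to work through (i)--(iv) in order, exploiting the support structure of the eigenvectors $\bm{p}_l$ and $\bm{q}_l$ furnished by Lemma \ref{lem:L_K}(ii).

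For (i), I would directly compute the off-diagonal entries of $PMQ$. For $h\neq l$,
$$
(PMQ)_{hl}=\bm{p}_h^T M\bm{q}_l=\sum_{i,j=1}^{\alpha}(\bm{p}_h^i)^T M_{ij}\bm{q}_l^j.
$$
Since $\bm{q}_l^j=\bm{0}$ whenever $j\neq \nu_l$ (Lemma \ref{lem:L_K}(ii)), only the column block $j=\nu_l$ survives. The crucial observation is that for $h\neq l$ the index $\nu_l\in\Lambda_0$ is different from $\nu_h$, so $\bm{p}_h^{\nu_l}=\bm{0}$ by the same lemma. Hence the potentially troublesome contribution from the diagonal block $M_{\nu_l\nu_l}$ drops out, and the remaining terms involve only off-diagonal blocks $M_{i\nu_l}$ with $i\neq \nu_l$, which are entrywise nonnegative because $M$ is cooperative. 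Combined with $\bm{p}_h^i\geq\bm{0}$ and $\bm{q}_l^{\nu_l}\gg\bm{0}$ this yields $(PMQ)_{hl}\geq 0$, proving cooperativity.

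For (ii), I would note that if $\Pi$ denotes the $\alpha_0\times\alpha_0$ permutation matrix with a $1$ in position $(h,b_h)$, then permuting the rows of $P$ and columns of $Q$ simultaneously gives $\tilde{M}=\Pi^T(PMQ)\Pi$, which establishes permutation similarity and inherits cooperativity from (i). The existence of a further permutation putting $\tilde{M}$ into block lower triangular form with irreducible diagonal blocks is the standard Frobenius normal form (strongly connected component decomposition) applied to the directed graph of $\tilde{M}$; see \cite{berman1994nonnegative}. Part (iii) is then immediate because additionally reordering the entries of each $\bm{b}^i$ amounts to conjugating each diagonal block $\tilde{M}_{ii}$ by a permutation, which neither spoils irreducibility nor introduces any new nonzero upper blocks.

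Part (iv) is the main bookkeeping step. The key observation is that the supports of $\bm{p}_{b^i_l}$ and $\bm{q}_{b^i_l}$ (in the original block indexing) are contained in the collection $\{\tilde{\nu}^i_j:1\leq j\leq \alpha_0^c+\alpha_i\}$: indeed $\bm{q}_{b^i_l}$ is supported in the single block $\nu_{b^i_l}=\alpha_0^c+b^i_l=\tilde{\nu}^i_{\alpha_0^c+l}$, while $\bm{p}_{b^i_l}$ is supported in $\Lambda_0^c\cup\{\nu_{b^i_l}\}=\{\tilde{\nu}^i_1,\ldots,\tilde{\nu}^i_{\alpha_0^c}\}\cup\{\tilde{\nu}^i_{\alpha_0^c+l}\}$. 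Consequently the bilinear forms $\bm{p}_{b^i_h}^T M\bm{q}_{b^i_l}$, $\bm{p}_{b^i_h}^T L\bm{q}_{b^i_l}$ and $\bm{p}_{b^i_h}^T\bm{q}_{b^i_l}$ can be computed using only those rows and columns of $M$, $L$ and the vectors indexed by $\{\tilde{\nu}^i_j\}$. Translating this into the restricted objects yields $\tilde{m}_{hl}^i=\tilde{m}_{b^i_h,b^i_l}$ (so $\tilde{M}^i=\tilde{M}_{ii}$), $L^i\bm{q}_{l,i}=\bm{0}$ and $\bm{p}_{l,i}^T L^i=\bm{0}^T$ from $L\bm{q}_{b^i_l}=\bm{0}$ and $\bm{p}_{b^i_l}^T L=\bm{0}^T$, and the biorthogonality $\bm{p}_{l,i}^T\bm{q}_{h,i}=\bm{p}_{b^i_l}^T\bm{q}_{b^i_h}=\delta_{b^i_l b^i_h}=\delta_{lh}$.

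I expect the main obstacle to be the notational overhead in (iv), where one must carefully align three different index sets (the original $\{1,\ldots,\alpha\}$, the permuted set from $\bm{b}$, and the sub-block indices $\{\tilde{\nu}^i_j\}$) and verify that the supports of the chosen eigenvectors are compatible with restriction to the sub-block; once the support computation is done, each identity reduces to a transparent re-indexing of an already established equality from Lemma \ref{lem:L_K}.
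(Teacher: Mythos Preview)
Your proposal is correct and follows essentially the same route as the paper's proof: the explicit computation of off-diagonal entries for (i), permutation similarity plus the Frobenius normal form from \cite{berman1994nonnegative} for (ii)--(iii), and the support/restriction argument for (iv) all match the paper. Your phrasing of (iv) in terms of the supports of $\bm{p}_{b^i_l}$ and $\bm{q}_{b^i_l}$ being contained in $\{\tilde{\nu}^i_j\}$ is slightly more conceptual than the paper's explicit index arithmetic, but it is the same argument.
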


\begin{proof}
	We only consider the case of $\alpha_0^c >0$, since  the case of  $\alpha_0^c =0$ can be addressed  in a similar way.
	
	(i)
	For any $1 \leq l \leq \alpha_0$, $\bm{q}_{l}^{\nu_{l}} \neq \bm{0}$ and $\bm{q}_{l}^{j} = \bm{0}$,  $j \neq \nu_{l}$, and $\bm{p}_{l}^{\nu_{l}} \neq \bm{0}$ and $\bm{p}_{l}^{j} = \bm{0}$, $j \neq \nu_{l}$ with $j>\alpha_0^c$. An easy computation yields that
	\begin{equation}
		\bm{p}_{h}^T M \bm{q}_{l}
		= \sum_{j =1}^{\alpha_0^c} (\bm{p}_{h}^{j})^T M_{j \nu_{l}} \bm{q}_{l}^{\nu_{l}} + (\bm{p}_{h}^{\nu_{h}})^T M_{\nu_{h} \nu_{l}} \bm{q}_{l}^{\nu_{l}},~ 1\leq h,l \leq \alpha_0.
	\end{equation}
	Since $M_{j\nu_{l}}$ is nonnegative for all $1 \leq j \leq \alpha_0^c$ and $M_{\nu_{h} \nu_{l}}$ is nonnegative for $h \neq l$, it follows  that $PMQ$ is cooperative.
	
	Note that exchanging the order of the components of $\bm{b}$ is equivalent to exchanging the row and column simultaneously. Thus, 
	statements (ii) and (iii) follow from \cite[Section 2.3]{berman1994nonnegative}.
	
	(iv) 
	It is easy to see that $\tilde{\nu}^{i}_{1}< \cdots<\tilde{\nu}^{i}_{\alpha_i+\alpha_0^c} $ and $\tilde{\nu}^{i}_{l+\alpha_0^c}=b_{l}^{i}+\alpha_{0}^{c}=\nu_{b^{i}_{l}}$, $\forall  1< l < \alpha_i$. Thus,
	for any $1\leq i \leq \tilde{n}$, $1 \leq l \leq \alpha_i$, $$\bm{q}_{l,i}^{\nu_l}=\bm{q}_{l,i}^{\alpha_0^c+l}=\bm{q}_{b_{l}^{i}}^{\tilde{\nu}_{\alpha_0^c+l}^{i}}=\bm{q}_{b_l^i}^{\nu_{b_l^i}} \neq \bm{0}, ~\bm{q}_{l,i}^{j}=	\bm{q}_{b_{l}^{i}}^{\tilde{\nu}_{j}^{i}}= \bm{0},  ~\forall j \neq \nu_l,$$ $$\bm{p}_{l,i}^{\nu_l}=\bm{p}_{l,i}^{\alpha_0^c+l}=\bm{p}_{b_{l}^{i}}^{\tilde{\nu}_{\alpha_0^c+l}^{i}}=\bm{p}_{b_l^i}^{\nu_{b_l^i}} \neq \bm{0},~ \bm{p}_{l,i}^{j}=	\bm{p}_{b_{l}^{i}}^{\tilde{\nu}_{j}^{i}}= \bm{0}, ~\forall j \neq \nu_l, \text{ with } j>\alpha_0^c,
	$$ 
	$$
	M_{j \nu_{l}}^i 
	=M_{j (l+ \alpha_0^c)}^i 
	=M_{\tilde{\nu}_{j}^{i}\tilde{\nu}_{l+ \alpha_0^c}^{i}}
	=M_{j\tilde{\nu}_{l+ \alpha_0^c}^{i}}
	=M_{j \nu_{b_l^i}},~ \forall 1 \leq j \leq \alpha_0^c,
	$$
	and
	$$
	M_{\nu_{h} \nu_{l}}^i
	=M_{(h+ \alpha_0^c) (l+ \alpha_0^c)}^i
	=M_{\tilde{\nu}_{h+ \alpha_0^c}^{i}\tilde{\nu}_{l+ \alpha_0^c}^{i}}
	=M_{\nu_{b_h^i} \nu_{b_l^i}}, ~ \forall 1 \leq  h \leq \alpha_i.
	$$
	Thus, for any $1\leq i \leq \tilde{n}$, $1 \leq l,h \leq \alpha_i$,
	$$
	\begin{aligned}
		\tilde{m}_{hl}^{i}
		&=\bm{p}_{h,i}^T M^i \bm{q}_{l,i}
		= \sum_{j =1}^{\alpha_0^c} (\bm{p}_{h,i}^{j})^T M_{j \nu_{l}}^i \bm{q}_{l,i}^{\nu_{l}} 
		+ (\bm{p}_{h,i}^{\nu_{h}})^T M_{\nu_{h} \nu_{l}}^i \bm{q}_{l,i}^{\nu_{l}}\\
		&=\sum_{j =1}^{\alpha_0^c} (\bm{p}_{b_h^i}^{j})^T M_{j \nu_{b_l^i}} \bm{q}_{b_l^i}^{\nu_{b_l^i}} + (\bm{p}_{b_h^i}^{\nu_{b_h^i}})^T M_{\nu_{b_h^i} \nu_{b_l^i}} \bm{q}_{b_l^i}^{\nu_{b_l^i}}.
	\end{aligned}
	$$
	We also have
	$$
	\tilde{m}_{hl}=
	\bm{p}_{b_h}^T M \bm{q}_{b_l}
	= \sum_{j =1}^{\alpha_0^c} (\bm{p}_{b_h}^{j})^T M_{j \nu_{b_l}} \bm{q}_{b_l}^{\nu_{b_l}} + (\bm{p}_{b_h}^{\nu_{b_h}})^T M_{\nu_{b_h} \nu_{b_l}} \bm{q}_{b_l}^{\nu_{b_l}}, ~1\leq h,l \leq \alpha_0.
	$$
	Since $b_{h}^{i}=b_{h}$ if $i=1$ and $b_{h}^{i}=b_{h+\sum_{j =1}^{i-1} \alpha_j}$ if $i\geq 2$,
	we obtain that $	\tilde{m}_{hl}^{i}=\tilde{m}_{hl}$ for all $1\leq h,l \leq \alpha_i$ if $i =1$ and $	\tilde{m}_{hl}^{i}=\tilde{m}_{(h+\sum_{j=1}^{i-1} \alpha_j) (l+\sum_{j=1}^{i-1} \alpha_j)}$ for all $1\leq h,l \leq \alpha_i$ if $i \geq 2$. This yields that $\tilde{M}^{i}=\tilde{M}_{ii}$, $\forall 1 \leq i \leq \tilde{n}$. Similarly, we can show that 
	$$
	\bm{p}_{l,i}^T\bm{q}_{l,i}=1,~ \bm{p}_{l,i}^T\bm{q}_{h,i}=0,~ h\neq l,~
	L^{i}\bm{q}_{l,i}=\bm{0}, \text{ and }
	\bm{p}_{l,i}^T L^{i}=\bm{0}^T
	$$
	hold for any $ 1 \leq i \leq \tilde{n}, ~1\leq l \leq \alpha_i$.
\end{proof}

Note that we define $M^i$ by choosing all indexes $\{\tilde{\nu}_j^i:  1 \leq j \leq \alpha_0^c +\alpha_i\}$ from $M$, and  define $L^{i}$, $\bm{p}_{l,i}$, $\bm{q}_{l,i}$ by using the same indexes of $L$, $\bm{p}_{l}$, $\bm{q}_{l}$, respectively. Thus,  the analysis of a reducible matrix $\tilde{M}$ can be transferred into that of its irreducible block. 

\begin{lemma}\label{lem:PQM}
	Assume that {\rm (H1)$'$} holds. Let  $\hat{P}=(\hat{p}_{lj})_{\alpha_0 \times n}$ and $\hat{Q}=(\hat{q}_{ih})_{n \times \alpha_0 }$ be two nonnegative matrices such that $\hat{P}L=0$, $L\hat{Q}=0$, and $\hat{P}\hat{Q}=I$, where $I$ is an $\alpha_0 \times \alpha_0$ identity matrix. If $M$ is an $n\times n$ matrix, then $PMQ$ is similar to $\hat{P}M\hat{Q}$.
\end{lemma}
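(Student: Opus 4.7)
The plan is to recognise the columns of $Q$ and $\hat{Q}$ (respectively the rows of $P$ and $\hat{P}$) as two bases of the right (resp.\ left) null space of $L$, and then to read off the similarity from a standard change-of-basis argument together with the normalisation forced by $\hat{P}\hat{Q}=I$.

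First I would pin down the dimensions of $\ker(L)$ and the left null space $\{\bm{w}^T:\bm{w}^TL=\bm{0}^T\}$. Lemma~\ref{lem:L_K}(ii) states that the algebraic multiplicity of the zero eigenvalue of $L$ is $\alpha_0$; moreover, Claim~1 inside its proof shows that any $\bm{q}\in\ker(L)$ must satisfy $\bm{q}^{l}=\bm{0}$ for $l\in\Lambda_0^c$ and $L_{ll}\bm{q}^{l}=\bm{0}$ for $l\in\Lambda_0$, and the irreducibility of each $L_{ll}$ with $l\in\Lambda_0$ (together with $s(L_{ll})=0$) then forces $\dim\ker(L)=\alpha_0$. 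Since $\dim\ker(L)=\dim\ker(L^T)=n-\mathrm{rank}(L)$, the left null space of $L$ is also $\alpha_0$-dimensional.

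Next, because $LQ=0$ and $PQ=I_{\alpha_0}$, the $\alpha_0$ columns of $Q$ are linearly independent elements of $\ker(L)$, hence form a basis; applying the same reasoning to $\hat{Q}$ (using $L\hat{Q}=0$ and $\hat{P}\hat{Q}=I_{\alpha_0}$) shows that its columns form another basis of $\ker(L)$. Thus there is a unique invertible matrix $A\in\mathbb{R}^{\alpha_0\times\alpha_0}$ with $\hat{Q}=QA$. By the symmetric argument applied to the rows of $P$ and $\hat{P}$ (both bases of the left null space), there is an invertible $B$ with $\hat{P}=BP$. Substituting into $\hat{P}\hat{Q}=I_{\alpha_0}$ gives
\[
I_{\alpha_0}=\hat{P}\hat{Q}=(BP)(QA)=B(PQ)A=BA,
\]
so $B=A^{-1}$. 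Consequently, for any $n\times n$ matrix $M$,
\[
\hat{P}M\hat{Q}=(BP)\,M\,(QA)=A^{-1}\,(PMQ)\,A,
\]
which exhibits $\hat{P}M\hat{Q}$ as similar to $PMQ$.

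I do not anticipate any real obstacle. The substantive input is the identification $\dim\ker(L)=\dim\ker(L^T)=\alpha_0$, which is already embedded in Lemma~\ref{lem:L_K}; once this is in hand, the rest is the elementary observation that two bases of the same space are related by an invertible transition matrix and that the pairing $\hat{P}\hat{Q}=I$ rigidly couples the two transition matrices into mutual inverses. The only minor point requiring a line of justification is the invertibility of $A$, which is automatic since $A$ is the transition matrix between two bases of $\ker(L)$.
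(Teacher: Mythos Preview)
Your proposal is correct and follows essentially the same approach as the paper: both recognise the rows of $P,\hat{P}$ and the columns of $Q,\hat{Q}$ as bases of the left and right null spaces of $L$, introduce the two transition matrices, and use $\hat{P}\hat{Q}=I$ together with $PQ=I$ to conclude these are mutual inverses, yielding $\hat{P}M\hat{Q}=A^{-1}(PMQ)A$. The only cosmetic difference is the labelling of the transition matrices and that the paper invokes rank considerations directly from $PQ=\hat{P}\hat{Q}=I$ rather than quoting rank--nullity.
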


\begin{proof}
	For any $1\leq l \leq \alpha_0$, let $\bm{\hat{p}}_l:=(\hat{P}_{l1},\cdots,\hat{P}_{ln})^T$. It is easy to see that $\bm{\hat{p}}_l^T$ is a left eigenvector of $L$ corresponding to the zero eigenvalue. Since $PQ=I$ and $\hat{P}\hat{Q}=I$, the 
	matrices $P$, $Q$, $\hat{P}$ and $\hat{Q}$ share the same rank $\alpha_0$. This implies that $\{\bm{p}_i^T: 1 \leq  i \leq \alpha_0\}$ and $\{\bm{\hat{p}}_i^T: 1 \leq  i \leq \alpha_0\}$ are  two bases of the left eigenspace of $L$ corresponding to the zero eigenvalue due to 
	Lemma \ref{lem:L_K}. Thus, there exists an 
	$\alpha_0 \times \alpha_0$ invertible matrix $A$  such that $AP=\hat{P}$.  Similarly, there exists an $\alpha_0 \times \alpha_0$ invertible  matrix $B$ such that $QB=\hat{Q}$. It then follows that 
	$AB=APQB=\hat{P}\hat{Q}=I$, and hence, $A=B^{-1}$. Therefore, $\hat{P}M\hat{Q}=APMQA^{-1}$.	
\end{proof}

In order to study the continuity of the basic reproduction ratio with respect to parameters, we next generalize the results in \cite[Theorems 2.1 and 2.2]{zhang2020asymptotic}.
Let $(\Theta,\rho_{\Theta})$ be a metric space with metric $\rho_{\Theta}$ and let $H(\mu, \theta)$ be a mapping from $\mathbb{R}_+ \times \Theta \rightarrow \mathbb{R}$. Assume that for any $\theta \in \Theta$, one of the following two properties holds:
\begin{itemize}
	\item[(P1)]  There exists a unique $\mu(\theta)>0$ such that $H(\mu(\theta),\theta)=0$, $H(\mu,\theta)<0$ for all $\mu > \mu(\theta)$ and $H(\mu,\theta)>0$ for all $\mu < \mu(\theta)$.
	\item[(P2)] $H(\mu,\theta)<0$ for all $\mu > 0$.
\end{itemize}
For convenience,  we define $\mu(\theta):=0$ in the case (P2).
Then we have the following observation.
\begin{lemma}\label{lem:R0:continuity}
	Assume that for any $\theta \in \Theta$, either {\rm(P1)} or {\rm (P2)} holds. Let $\theta_0 \in \Theta$ be given.
	If $H(\mu,\theta)$ converges to $H(\mu,\theta_0)$ as $\theta \rightarrow \theta_0$ for any $\mu>0$, then $\lim\limits_{\theta \rightarrow \theta_0}\mu(\theta)= \mu(\theta_0)$.
\end{lemma}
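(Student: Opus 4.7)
The plan is to establish convergence sequentially: fix an arbitrary sequence $\theta_k \to \theta_0$ and show $\mu(\theta_k) \to \mu(\theta_0)$. The whole argument rests on transferring the sign information of $H(\cdot,\theta_0)$ to $H(\cdot,\theta_k)$ via pointwise convergence, then reading off bounds on $\mu(\theta_k)$ from the monotone-sign structure encoded in (P1)--(P2).

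I would split the argument into two cases according to which of (P1), (P2) holds at $\theta_0$. If (P1) holds at $\theta_0$, so that $\mu(\theta_0)>0$, then for any sufficiently small $\varepsilon>0$ with $\mu(\theta_0)-\varepsilon>0$ we have
\begin{equation*}
H(\mu(\theta_0)-\varepsilon,\theta_0)>0, \qquad H(\mu(\theta_0)+\varepsilon,\theta_0)<0.
\end{equation*}
Pointwise convergence at the two values $\mu(\theta_0)\pm\varepsilon$ then yields, for all $k$ large, $H(\mu(\theta_0)-\varepsilon,\theta_k)>0$ and $H(\mu(\theta_0)+\varepsilon,\theta_k)<0$. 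The first inequality rules out (P2) at $\theta_k$, so (P1) must hold there, and the defining sign pattern of (P1) forces $\mu(\theta_0)-\varepsilon<\mu(\theta_k)<\mu(\theta_0)+\varepsilon$. Since $\varepsilon$ was arbitrary, $\mu(\theta_k)\to\mu(\theta_0)$.

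If (P2) holds at $\theta_0$, so $\mu(\theta_0)=0$, I only need an upper bound. For any $\varepsilon>0$ we have $H(\varepsilon,\theta_0)<0$, hence $H(\varepsilon,\theta_k)<0$ for all $k$ large. Whether $\theta_k$ satisfies (P1) (in which case $\mu(\theta_k)<\varepsilon$ by the sign pattern of (P1) at $\mu=\varepsilon$) or (P2) (in which case $\mu(\theta_k)=0<\varepsilon$ by our convention), we conclude $0\le\mu(\theta_k)<\varepsilon$, again giving $\mu(\theta_k)\to 0=\mu(\theta_0)$.

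The only subtle point — and the thing I would be most careful to verify — is that the two hypotheses (P1), (P2) genuinely provide strict sign changes rather than just nonstrict monotonicity, so that the pointwise convergence $H(\mu,\theta_k)\to H(\mu,\theta_0)$ at finitely many test values $\mu$ is enough to trap $\mu(\theta_k)$ in an $\varepsilon$-window. Because (P1) explicitly gives $H(\mu,\theta)<0$ and $H(\mu,\theta)>0$ strictly on the two sides of $\mu(\theta)$, and (P2) gives strict negativity, no further continuity or monotonicity of $H$ in $\mu$ is required; this is the reason the proof does not need uniform convergence in $\mu$, nor any assumption on how (P1) and (P2) are distributed among nearby $\theta$'s.
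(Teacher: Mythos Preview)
Your proof is correct and follows essentially the same approach as the paper's: both split into cases according to whether (P1) or (P2) holds at $\theta_0$, use the strict sign information at test points $\mu(\theta_0)\pm\varepsilon$ (respectively at $\varepsilon$) to transfer signs to nearby $\theta$, and then invoke the sign structure of (P1)/(P2) to trap $\mu(\theta)$. Your sequential phrasing is equivalent to the paper's $\varepsilon$--$\delta$ formulation, and your explicit observation that $H(\mu(\theta_0)-\varepsilon,\theta_k)>0$ rules out (P2) at $\theta_k$ is a small clarifying addition.
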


\begin{proof}
	We proceed according to two cases:
	
	{\it Case 1.}
	(P1) holds for $\theta_0$.
	For any $\epsilon \in (0,\mu(\theta_0))$, it follows from  (P1) that
	$$H(\mu(\theta_0)-\epsilon,\theta_0)>0, \text{ and }H(\mu(\theta_0)+\epsilon,\theta_0)<0.$$ Thus, there exists $\delta>0$ such that if $\rho_{\Theta} (\theta,\theta_0)<\delta$, then $$H(\mu(\theta_0)-\epsilon,\theta)>0,\text{ and }H(\mu(\theta_0)+\epsilon,\theta)<0.$$ Assumption (P1) implies that 
	$$
	\mu(\theta_0)-\epsilon <\mu(\theta)< \mu(\theta_0)+\epsilon 
	$$
	provided that $\rho_{\Theta}(\theta,\theta_0)<\delta$. That is, $\lim\limits_{\theta \rightarrow \theta_0}\mu(\theta)= \mu(\theta_0)$.
	
	{\it Case 2.}
	(P2) holds for $\theta_0$. It suffices to show that $\lim\limits_{\theta \rightarrow \theta_0}\mu(\theta)=0= \mu(\theta_0)$.
	For any given $\epsilon>0$, the assumption (P2) implies that
	$H(\epsilon,\theta_0)<0$.  Then there exists $\delta>0$ such that $H(\epsilon,\theta)<0$ if $\rho_{\Theta}(\theta,\theta_0)<\delta$. In view of (P1) or (P2), we conclude that $0 \leq \mu(\theta)< \epsilon$
	provided that $\rho_{\Theta}(\theta,\theta_0)<\delta$.
\end{proof}

\section{The principal eigenvalue}\label{sec:eig}
In this section, we investigate the asymptotic behavior of the principal eigenvalue for periodic cooperative patch models with large dispersal rate. We first recall some properties of time-periodic evolution families.

\begin{definition}\label{def:evol}
	A family of bounded linear operators $\varUpsilon(t,s)$, $t,s \in \mathbb{R}$ with $t\geq s$, on a Banach space $E$  is called  a $T$-periodic evolution family provided that 
	$$
	\varUpsilon(s,s)=I,\quad
	\varUpsilon(t,r)\varUpsilon(r,s)=\varUpsilon(t,s),\quad 
	\varUpsilon(t+T,s+T)=\varUpsilon(t,s),
	$$
	for all $t,s,r \in \mathbb{R}$ with $t\geq r \geq s$, and for each $e\in E$, $\varUpsilon(t,s)e$ is a continuous function of $(t,s)$ with $t \geq s$. The exponential growth bound of the evolution family $\{\varUpsilon(t,s): t\geq s\}$ is defined as 
	$$
	\omega(\varUpsilon) = \inf \{\tilde{\omega} \in \mathbb{R} : \exists M \geq 1: \forall t,s \in \mathbb{R},~t \geq s: \Vert \varUpsilon(t,s)\Vert_{E} \leq M e^{\tilde{\omega }(t-s)} \}.
	$$
\end{definition}

\begin{lemma}{\sc (\cite[Propostion A.2]{thieme2009spectral})}
	\label{lem:w_theta:equ}
	Let $\{ \varUpsilon(t,s): t\geq s \}$ be a $T$-periodic evolution family on a Banach space $E$. Then 
	$\omega(\varUpsilon)=\frac{\ln r(\varUpsilon(T,0))}{T}=\frac{\ln r(\varUpsilon(T+\tau,\tau))}{T},~ \forall \tau\in [0,T]$.
\end{lemma}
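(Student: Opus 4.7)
The plan is to prove the two asserted equalities in separate stages, first establishing that the spectral radius of the monodromy operator $\varUpsilon(T+\tau,\tau)$ is independent of the phase $\tau$, and then identifying this common value, through a logarithm, with the exponential growth bound.

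For the $\tau$-independence of $r(\varUpsilon(T+\tau,\tau))$, the key identity to exploit is obtained by iterating periodicity: for each integer $n\geq 1$,
\begin{equation*}
\varUpsilon(T+\tau,\tau)^{n}=\varUpsilon(nT+\tau,\tau)=\varUpsilon(nT+\tau,nT)\,\varUpsilon(nT,T)\,\varUpsilon(T,\tau)=\varUpsilon(\tau,0)\,\varUpsilon(T,0)^{\,n-1}\,\varUpsilon(T,\tau),
\end{equation*}
where the first factor is obtained by shifting $(nT+\tau,nT)\mapsto(\tau,0)$ and the middle by telescoping $\varUpsilon(nT,T)=\varUpsilon(T,0)^{n-1}$. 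Taking norms, extracting the $n$-th root, and applying Gelfand's formula yields $r(\varUpsilon(T+\tau,\tau))\leq r(\varUpsilon(T,0))$. A symmetric splitting of $\varUpsilon((n+1)T,0)=\varUpsilon(T,\tau)\,\varUpsilon(T+\tau,\tau)^{n}\,\varUpsilon(\tau,0)$ gives the reverse inequality. The norm factors $\|\varUpsilon(\tau,0)\|$ and $\|\varUpsilon(T,\tau)\|$ are finite by strong continuity combined with the uniform boundedness principle on the compact parameter set $\tau\in[0,T]$.

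For the first equality, write $r:=r(\varUpsilon(T,0))$. The lower bound $\omega(\varUpsilon)\geq T^{-1}\ln r$ is immediate: any admissible $\tilde\omega$ in the definition of $\omega(\varUpsilon)$ produces $\|\varUpsilon(T,0)^{n}\|=\|\varUpsilon(nT,0)\|\leq Me^{\tilde\omega nT}$, whence $r\leq e^{\tilde\omega T}$ by Gelfand. For the upper bound, fix $\tilde\omega>T^{-1}\ln r$ and choose $\epsilon>0$ with $r+\epsilon<e^{\tilde\omega T}$. Pick $C>0$ such that $\|\varUpsilon(T,0)^{m}\|\leq C(r+\epsilon)^{m}$ for all $m\geq 0$. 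Given $t\geq s$, write $s=kT+\tau$ with $\tau\in[0,T)$ and $t-s=nT+\sigma$ with $\sigma\in[0,T)$. Periodicity gives
\begin{equation*}
\varUpsilon(t,s)=\varUpsilon(s+nT+\sigma,s+nT)\,\varUpsilon(s+nT,s)=\varUpsilon(\tau+\sigma,\tau)\,\varUpsilon(T+\tau,\tau)^{n},
\end{equation*}
and the previously derived identity reduces the last factor to $\varUpsilon(\tau,0)\,\varUpsilon(T,0)^{n-1}\,\varUpsilon(T,\tau)$ for $n\geq 1$. Bounding the three ``boundary'' factors by the compactness-based constant $M_{0}:=\sup\{\|\varUpsilon(b,a)\|:0\leq a\leq b\leq 2T\}$ and bounding the middle factor by $C(r+\epsilon)^{n-1}$ yields $\|\varUpsilon(t,s)\|\leq M_{0}^{3}C(r+\epsilon)^{n-1}$, which is dominated by a constant multiple of $e^{\tilde\omega(t-s)}$ because $(r+\epsilon)/e^{\tilde\omega T}<1$ and $\sigma$ contributes a bounded factor. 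The case $n=0$ is absorbed into the constant. Passing to the infimum over $\tilde\omega$ completes the proof.

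The main obstacle is the upper bound on $\omega(\varUpsilon)$: one must obtain a norm bound $\|\varUpsilon(t,s)\|\leq Me^{\tilde\omega(t-s)}$ that is uniform in the starting time $s$, not merely in the elapsed time $t-s$. This is where the reduction of $\varUpsilon(T+\tau,\tau)^{n}$ to a product involving $\varUpsilon(T,0)^{n-1}$ sandwiched by locally bounded factors is essential, since $\|\varUpsilon(T+\tau,\tau)^{n}\|$ itself is not a priori controlled uniformly in $\tau$ despite the spectral radii agreeing. Strong continuity together with the uniform boundedness principle provides exactly the uniform local norm bound needed to close the argument.
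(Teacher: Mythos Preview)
Your argument is correct. The paper itself does not prove this lemma; it is quoted without proof from Thieme \cite{thieme2009spectral}, so there is no in-paper argument to compare against. Your approach---reducing $\varUpsilon(T+\tau,\tau)^{n}$ to a product of $\varUpsilon(T,0)^{n-1}$ sandwiched between two locally bounded boundary factors, and then invoking Gelfand's formula in both directions---is the standard route to this result for periodic evolution families. The only point that deserves a word of care is the constant bookkeeping in the upper bound on $\omega(\varUpsilon)$ when $\tilde\omega<0$: since $t-s=nT+\sigma$ with $\sigma\in[0,T)$, one has $e^{\tilde\omega nT}=e^{\tilde\omega(t-s)}e^{-\tilde\omega\sigma}\leq e^{\tilde\omega(t-s)}\max(1,e^{-\tilde\omega T})$, so the exponential bound $\|\varUpsilon(t,s)\|\leq M'e^{\tilde\omega(t-s)}$ holds with a constant depending on $\tilde\omega$ and $T$, exactly as you indicated. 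Your invocation of the uniform boundedness principle to get $M_{0}<\infty$ from strong continuity on the compact triangle $\{0\leq a\leq b\leq 2T\}$ is also correct and is indeed the step that makes the estimate uniform in the starting phase $\tau$.
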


Let $M(t)=(m_{ij}(t))_{n \times n}$ be a continuous $n\times n$ matrix-valued function of $t \in \mathbb{R}$  such that
\begin{itemize}
	\item[(H4)] $M(t)=M(t+T)$ and  $M(t)$ is cooperative for all $t \in \mathbb{R}$.
\end{itemize}  

Motivated by population models in a patchy environment, we consider the following periodic ODE system
\begin{equation}\label{equ:sys:periodic}
	\frac{\mathrm{d} \bm{v}}{\mathrm{d} t}=d L \bm{v} + M(t) \bm{v},
\end{equation}
and the associated eigenvalue problem:
\begin{equation}\label{equ:eig:periodic}
	\frac{\mathrm{d} \bm{u}}{\mathrm{d} t}=d L \bm{u} + M(t) \bm{u} -\lambda \bm{u}.
\end{equation}

\begin{definition}\label{def:principal}
	$\lambda^*$ is called the principal eigenvalue of \eqref{equ:eig:periodic} if it is a real eigenvalue with a nonnegative eigenfunction and the real parts
	of all other eigenvalues are not greater than $\lambda^*$.
\end{definition}

For any $d \geq 0$, according to \cite[Section 7.3]{krasnoselskij1964positive} or \cite[Chapter 5]{pazy1983semigroups}, system \eqref{equ:sys:periodic} admits a unique evolution family $\{\mathbb{O}_d(t,s): t \geq s \}$ on $\mathbb{R}^n$ with $\mathbb{O}_d(t,s) \bm{\phi}=\bm{u}(t,s;\bm{\phi})$, $\forall t\geq s$ and $\bm{\phi} \in \mathbb{R}^n$, where $\bm{u}(t,s;\bm{\phi})$ is the unique solution at time $t$ of \eqref{equ:sys:periodic} with initial data $\bm{\phi}$ at time $s$. In view of \cite[Theorem 7.17]{krasnoselskij1964positive} and Lemma \ref{lem:w_theta:equ}, we have the following result (see also \cite[Theorem 2.7]{liang2017principal}).
\begin{theorem}\label{thm:existence}
	Assume that {\rm (H1)} and {\rm (H4)} hold. Then the eigenvalue problem \eqref{equ:eig:periodic} admits the principal eigenvalue $\lambda_d^*=  \omega (\mathbb{O}_d)= \frac{\ln r(\mathbb{O}_d(T,0))}{T}$ for all 
	$d \geq 0$.
\end{theorem}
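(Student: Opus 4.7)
The plan is to reduce the eigenvalue problem to a Perron--Frobenius statement about the Poincar\'e map $\mathbb{O}_d(T,0)$. The first observation is that under (H1) and (H4), for each $t$ the matrix $dL+M(t)$ is cooperative (off-diagonal entries of $L$ and $M(t)$ are nonnegative), so the associated evolution family $\{\mathbb{O}_d(t,s):t\geq s\}$ leaves the nonnegative cone $\mathbb{R}_+^n$ invariant; equivalently, $\mathbb{O}_d(t,s)$ is an entrywise nonnegative matrix for every $t\geq s$. This is the key structural fact that unlocks the Perron--Frobenius machinery.

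Next, I would apply the Perron--Frobenius theorem for general (not necessarily irreducible) nonnegative matrices to $\mathbb{O}_d(T,0)$: its spectral radius $\mu^{*}:=r(\mathbb{O}_d(T,0))$ is an eigenvalue with a corresponding nonnegative eigenvector $\bm{\phi}\geq\bm{0}$. Setting $\lambda^{*}:=\frac{\ln\mu^{*}}{T}$, I would define
\[
\bm{u}(t):=e^{-\lambda^{*}t}\,\mathbb{O}_d(t,0)\bm{\phi},\qquad t\in\mathbb{R},
\]
and check that $\bm{u}(t)$ is nonnegative (since $\mathbb{O}_d(t,0)$ is nonnegative), $T$-periodic (using $\mathbb{O}_d(T,0)\bm{\phi}=\mu^{*}\bm{\phi}$ together with the cocycle property), and satisfies \eqref{equ:eig:periodic} with $\lambda=\lambda^{*}$. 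That produces a nonnegative eigenfunction for $\lambda^{*}$.

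To verify maximality, I would use the standard Floquet correspondence: if $\lambda\in\mathbb{C}$ is any eigenvalue of \eqref{equ:eig:periodic} with $T$-periodic eigenfunction $\bm{w}(t)$, then $\bm{v}(t):=e^{\lambda t}\bm{w}(t)$ solves \eqref{equ:sys:periodic}, and periodicity of $\bm{w}$ forces $e^{\lambda T}$ to be an eigenvalue of $\mathbb{O}_d(T,0)$. Hence $|e^{\lambda T}|\leq r(\mathbb{O}_d(T,0))=e^{\lambda^{*}T}$, i.e.\ $\mathrm{Re}\,\lambda\leq\lambda^{*}$, so $\lambda^{*}$ meets the maximality condition in Definition \ref{def:principal}. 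The identity $\lambda_d^{*}=\omega(\mathbb{O}_d)=\frac{\ln r(\mathbb{O}_d(T,0))}{T}$ then follows immediately from Lemma \ref{lem:w_theta:equ}.

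I do not expect a serious obstacle here. The one point that deserves care is that without irreducibility of $dL+M(t)$ one cannot appeal to a strong Perron--Frobenius theorem to claim positivity or simplicity of the eigenvector; however, Definition \ref{def:principal} only demands a \emph{nonnegative} eigenfunction and maximality of the real part, both of which the classical nonnegative-matrix version of Perron--Frobenius (equivalently the Krasnosel'ski{\u\i} fixed-point/positive-operator result cited as \cite[Theorem 7.17]{krasnoselskij1964positive}) already supplies. So the proof is essentially a clean packaging of Perron--Frobenius for $\mathbb{O}_d(T,0)$ combined with the Floquet-type dictionary between eigenvalues of \eqref{equ:eig:periodic} and eigenvalues of the period map.
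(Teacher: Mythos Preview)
Your proposal is correct and mirrors exactly the approach the paper takes: the paper simply cites \cite[Theorem 7.17]{krasnoselskij1964positive} together with Lemma \ref{lem:w_theta:equ} (and \cite[Theorem 2.7]{liang2017principal}), and your argument is precisely the unpacking of that citation---Perron--Frobenius for the nonnegative Poincar\'e map $\mathbb{O}_d(T,0)$ plus the standard Floquet dictionary. The only small point worth recording explicitly is that $\mathbb{O}_d(T,0)$, being a fundamental matrix of a linear ODE, is invertible, so $r(\mathbb{O}_d(T,0))>0$ and $\ln r(\mathbb{O}_d(T,0))$ is well defined.
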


The subsequent result is a consequence of the standard comparison arguments.
\begin{lemma}\label{lem:comparison}
	Assume that {\rm (H4)} holds.
	Let $\hat{M}(t)=(\hat{m}_{ij}(t))_{n \times n}$ be a continuous $n\times n$ matrix-valued function of $t \in \mathbb{R}$ with $\hat{M}(t)=\hat{M}(t+T)$ such that $\hat{M}(t)$ is cooperative for all $t \in \mathbb{R}$. Let $\lambda^*_M$ and $\lambda^*_{\hat{M}}$ be the principal eigenvalue of $\frac{\mathrm{d} \bm{u}}{\mathrm{d} t}= M(t) \bm{u} -\lambda \bm{u}$ and $\frac{\mathrm{d} \bm{u}}{\mathrm{d} t}= \hat{M}(t) \bm{u} -\lambda \bm{u}$, respectively. If $m_{ij}(t) \geq \hat{m}_{ij}(t)$, $\forall 1 \leq i,j \leq n$, $t \in \mathbb{R}$, then $\lambda^*_{M} \geq \lambda^*_{\hat{M}}$. Further, if $M(t)$ can be split into 
	$$
	M(t)=
	\left(
	\begin{array}{cc}
		M_{11}(t)&  M_{12}(t) \\ 
		M_{2 1}(t)& M_{2 2}(t)\\
	\end{array} 
	\right),
	$$
	then $\lambda_M^* \geq \lambda^*_{M_{11}}$, where $\lambda^*_{M_{11}}$ is the principal eigenvalue of $\frac{\mathrm{d} \bm{u}}{\mathrm{d} t}= M_{11}(t) \bm{u} -\lambda \bm{u}$.
\end{lemma}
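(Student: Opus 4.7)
The plan is to reduce the principal-eigenvalue comparison to a comparison of spectral radii of Poincaré maps, and then to invoke the standard monotonicity properties of linear cooperative evolution families. By Theorem \ref{thm:existence}, $\lambda^*_{M} = T^{-1}\ln r(\Phi_{M}(T,0))$ and $\lambda^*_{\hat{M}} = T^{-1}\ln r(\Phi_{\hat{M}}(T,0))$, where $\{\Phi_{M}(t,s)\}_{t \geq s}$ and $\{\Phi_{\hat{M}}(t,s)\}_{t \geq s}$ are the evolution families on $\mathbb{R}^n$ associated with $\dot{\bm{v}} = M(t)\bm{v}$ and $\dot{\bm{v}} = \hat{M}(t)\bm{v}$, respectively. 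Since $\ln(\cdot)$ is monotone, it suffices to show $r(\Phi_{M}(T,0)) \geq r(\Phi_{\hat{M}}(T,0))$, and likewise in the block setting.

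For the first assertion, the key fact is that since $M(t)$ and $\hat{M}(t)$ are cooperative and $T$-periodic, both evolution families leave the nonnegative cone $\mathbb{R}^n_+$ invariant; equivalently, the matrices $\Phi_{M}(t,s)$ and $\Phi_{\hat{M}}(t,s)$ are entry-wise nonnegative for all $t \geq s$. Setting $N(t) := M(t) - \hat{M}(t)$, the hypothesis yields $N(t) \geq 0$ entry-wise, and the variation-of-constants identity
$$
\Phi_{M}(t,s)\bm{\phi} \;=\; \Phi_{\hat{M}}(t,s)\bm{\phi} + \int_{s}^{t} \Phi_{\hat{M}}(t,r)\,N(r)\,\Phi_{M}(r,s)\bm{\phi}\, \mathrm{d}r
$$
applied to $\bm{\phi} \geq \bm{0}$ shows $\Phi_{M}(t,s)\bm{\phi} \geq \Phi_{\hat{M}}(t,s)\bm{\phi}$ componentwise. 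Hence $\Phi_{M}(T,0) \geq \Phi_{\hat{M}}(T,0)$ entry-wise, and the classical monotonicity of the spectral radius on entry-wise ordered nonnegative matrices (a Perron-Frobenius type fact) gives $r(\Phi_{M}(T,0)) \geq r(\Phi_{\hat{M}}(T,0))$, so $\lambda^*_M \geq \lambda^*_{\hat{M}}$.

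For the block statement, I would introduce the auxiliary $T$-periodic matrix function
$$
\bar{M}(t) := \begin{pmatrix} M_{11}(t) & 0 \\ 0 & M_{22}(t) \end{pmatrix},
$$
obtained by zeroing out the two off-diagonal blocks. Because $M(t)$ is cooperative, both $M_{12}(t)$ and $M_{21}(t)$ have nonnegative entries, so $\bar{M}(t) \leq M(t)$ entry-wise and $\bar{M}(t)$ is itself cooperative. The first part of the lemma, applied to the pair $(M,\bar{M})$, gives $\lambda^*_M \geq \lambda^*_{\bar{M}}$. Since $\bar{M}(t)$ is block diagonal, its evolution family factors as $\Phi_{\bar{M}}(t,s) = \mathrm{diag}\bigl(\Phi_{M_{11}}(t,s),\Phi_{M_{22}}(t,s)\bigr)$, whence $r(\Phi_{\bar{M}}(T,0)) = \max\{r(\Phi_{M_{11}}(T,0)),\, r(\Phi_{M_{22}}(T,0))\}$ and therefore $\lambda^*_{\bar{M}} = \max\{\lambda^*_{M_{11}},\lambda^*_{M_{22}}\} \geq \lambda^*_{M_{11}}$. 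Chaining these inequalities yields $\lambda^*_M \geq \lambda^*_{M_{11}}$.

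I expect no substantive obstacle here: the only non-trivial ingredient is monotonicity of the spectral radius under the entry-wise order on nonnegative matrices, which is classical. The rest is Kamke-type monotonicity for linear cooperative systems combined with the Poincaré map characterization already supplied by Theorem \ref{thm:existence}, fully consistent with the authors' description "standard comparison arguments."
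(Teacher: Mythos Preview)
Your proposal is correct and spells out precisely the ``standard comparison arguments'' that the paper invokes without detail: positivity of the Poincar\'e maps via cooperativity, entry-wise comparison through the variation-of-constants formula, and monotonicity of the spectral radius on nonnegative matrices, followed by the block-diagonal reduction for the second assertion. The only cosmetic point is that Theorem~\ref{thm:existence} is stated under {\rm (H1)} as well as {\rm (H4)}, but since the eigenvalue problems in this lemma carry no $dL$ term (equivalently $d=0$, or $L=0$, which trivially satisfies {\rm (H1)}), the cited formula $\lambda^* = T^{-1}\ln r(\Phi(T,0))$ applies without issue.
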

From now on, we let $\lambda_{d}^{*}$ be the principal eigenvalue of \eqref{equ:eig:periodic} and $\bm{u}_d=(u_{d,1},\cdots,u_{d,n})^T$ be an nonnegative eigenvector corresponding to $\lambda_{d}^{*}$ for any given $d \geq 0$.
For convenience, we normalize $\bm{u}_d$ by
$\max_{1 \leq i \leq n} \max_{t \in \mathbb{R}} u_{d,i}(t)=1$. 
\begin{lemma}\label{lem:bounded:1}
	Assume that {\rm (H1)} and {\rm (H4)} hold. Then there exists real number $C>0$, independent of $d$,  such that $ \vert \lambda_{d}^{*} \vert \leq C$.
\end{lemma}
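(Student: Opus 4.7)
The plan is to exploit the column-sum-zero property of $L$ from (H1) to eliminate the $d$-dependence after a suitable integration. Let $\bm{e}=(1,\ldots,1)^T$. Then (H1) gives $\bm{e}^T L=\bm{0}^T$, so testing the eigenvalue equation
$$\frac{\mathrm{d}\bm{u}_d}{\mathrm{d}t}=dL\bm{u}_d+M(t)\bm{u}_d-\lambda_d^*\bm{u}_d$$
against $\bm{e}$ on the left kills the large coefficient $d$ entirely. Setting $w_d(t):=\bm{e}^T\bm{u}_d(t)=\sum_{i=1}^n u_{d,i}(t)$, I get the scalar periodic identity
$$\frac{\mathrm{d} w_d}{\mathrm{d} t}=\bm{e}^T M(t)\bm{u}_d(t)-\lambda_d^*\, w_d(t).$$

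Next I would use $T$-periodicity of the eigenfunction $\bm{u}_d$ (hence of $w_d$) and integrate this identity over $[0,T]$. The left side vanishes, so
$$\lambda_d^*\int_0^T w_d(t)\,\mathrm{d}t=\int_0^T \bm{e}^T M(t)\bm{u}_d(t)\,\mathrm{d}t.$$
Since $M$ is continuous and $T$-periodic, the constant $C_M:=\max_{t\in[0,T]}\max_{i,j}|m_{ij}(t)|$ is finite and independent of $d$. Because $\bm{u}_d\geq\bm{0}$, a crude estimate yields
$$\Bigl|\bm{e}^T M(t)\bm{u}_d(t)\Bigr|\leq \sum_{i,j}|m_{ij}(t)|\,u_{d,j}(t)\leq nC_M\, w_d(t).$$
Dividing out $\int_0^T w_d\,\mathrm{d}t$, I would obtain $|\lambda_d^*|\leq nC_M$, which is the desired $d$-independent bound $C$.

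The one subtlety to handle carefully is the positivity of $\int_0^T w_d\,\mathrm{d}t$, which is needed before dividing. Here the normalization $\max_i\max_t u_{d,i}(t)=1$ comes in: there exist $i_0$ and $t_0$ with $u_{d,i_0}(t_0)=1$, so $w_d(t_0)\geq 1$, and continuity of $\bm{u}_d$ (solutions of a linear ODE) forces $w_d(t)>0$ on some open interval, giving $\int_0^T w_d\,\mathrm{d}t>0$. I do not anticipate any real obstacle: the only input beyond elementary ODE facts is the zero-column-sum condition on $L$, which is precisely what lets the $dL\bm{u}_d$ term drop out uniformly in $d$.
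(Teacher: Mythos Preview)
Your proof is correct. Both arguments exploit the zero-column-sum condition $\bm{e}^T L=\bm{0}^T$ from (H1), but the executions differ. The paper sandwiches $M(t)$ between constant matrices $M^2=\mathrm{diag}(\underline{M},\dots,\underline{M})$ and $M^1=(\overline{M})_{n\times n}$, invokes the comparison lemma (Lemma~\ref{lem:comparison}) to get $\underline{\lambda}\le\lambda_d^*\le\overline{\lambda}$, and then computes $\overline{\lambda}=n\overline{M}$ and $\underline{\lambda}=\underline{M}$ explicitly from $\bm{e}^T(dL+M^k)$. You instead left-multiply the eigenvalue equation by $\bm{e}^T$, integrate over a period, and use the normalization to ensure $\int_0^T w_d>0$; this yields $|\lambda_d^*|\le nC_M$ directly, without any comparison principle. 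Your route is shorter and more self-contained for this lemma; the paper's comparison-based approach, while slightly heavier here, is the template they later adapt under the weaker hypothesis (H1)$'$, where $\bm{e}$ is no longer a left null vector of $L$ and a suitable positive left eigenvector must be constructed from scratch.
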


\begin{proof}
	Let  $\overline{M}:= \max_{1 \leq i,j \leq n} \{\max_{ t\in \mathbb{R}} m_{ij}(t)\}$ and $\underline{M}:= \min_{1 \leq i,j \leq n}\{ \min_{ t\in \mathbb{R}} m_{ij}(t)\}$, and define two $n \times n$ matrices $M^1:=(m_{ij}^1)_{n\times n} $ by $m_{ij}^{1}=\overline{M}$ and $M^2:=\mathrm{diag}(\underline{M},\cdots\underline{M})$. Let $\overline{\lambda}$ and $\underline{\lambda}$ be the principal eigenvalue of 
	$dL+ M^1$ and $dL+ M^2$, respectively. We use $\bm{e}=(1,\cdots,1)^T$ to denote an $n$-dimensional vector. By the Perron-Frobenius theorem (see, e.g., \cite[Theorem 4.3.1]{smith2008monotone}), it then follows from $\bm{e}^T(dL+ M^1)=n\overline{M}\bm{e}^T$ and $\bm{e}^T(dL+ M^2)=\underline{M}\bm{e}^T$ that $\overline{\lambda}= n \overline{M}$ and $\underline{\lambda}= \underline{M}$. In view of Lemma \ref{lem:comparison}, we have $\underline{\lambda}\leq \lambda \leq \overline{\lambda}$.
\end{proof}

For any $d>0$, we define 
$$\tilde{u}^l_d(t):= \bm{p}_l^T \bm{u}_d(t),~ \forall 1 \leq l \leq \alpha_0,~\tilde{\bm{u}}_d(t):= \sum_{l=1}^{\alpha_0} \tilde{u}^l_d(t) \bm{q}_l
\text{ and }
\hat{\bm{u}}_d(t):=\bm{u}_d(t) - \tilde{\bm{u}}_d(t).
$$

\begin{lemma}\label{lem:hat:ud}
	Assume that {\rm (H1)} and {\rm (H4)} hold. Then $\sup_{t \in \mathbb{R}}\Vert \hat{\bm{u}}_d(t) \Vert_{\mathbb{R}^n} \rightarrow 0$ as $d \rightarrow +\infty$.
\end{lemma}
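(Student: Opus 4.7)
The plan is to interpret $\hat{\bm{u}}_d(t)$ as the component of the normalized eigenvector $\bm{u}_d(t)$ lying in a stable invariant subspace of $L$ complementary to $\ker L$, and then to exploit the fact that for large $d$ the operator $dL$ forces this component to zero exponentially fast.

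First, I observe that $\tilde{\bm{u}}_d(t) = QP\bm{u}_d(t)$ directly from the definitions, so $\hat{\bm{u}}_d(t) = (I-QP)\bm{u}_d(t)$. Because $PQ = I$, the matrix $QP$ is idempotent: its range equals $\mathrm{span}\{\bm{q}_1,\ldots,\bm{q}_{\alpha_0}\} = \ker L$ by Lemma \ref{lem:L_K}(ii), and its kernel is $\ker P$. The identity $PL=0$ makes $\ker P$ invariant under $L$. Moreover, Claim 2 in the proof of Lemma \ref{lem:L_K} shows that the algebraic multiplicity of the zero eigenvalue of $L$ equals $\alpha_0 = \dim \ker L$, so $L$ has no Jordan block at $0$, and thus $L|_{\ker P}$ has spectrum strictly in the open left half plane. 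Consequently, there exist constants $\delta > 0$ and $K_0 \geq 1$, independent of $d$, such that
$$\|e^{rdL}\bm{x}\|_{\mathbb{R}^n} \leq K_0 e^{-\delta r d}\|\bm{x}\|_{\mathbb{R}^n},\qquad \forall\,\bm{x}\in \ker P,\;r,d\geq 0.$$

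Next, I apply $I - QP$ to the eigenvalue identity $\bm{u}_d' = dL\bm{u}_d + M(t)\bm{u}_d - \lambda_d^*\bm{u}_d$. The relations $LQP\bm{u}_d = 0$ (hence $L\bm{u}_d = L\hat{\bm{u}}_d$) and $PL = 0$ (hence $(I-QP)L = L$) yield the inhomogeneous linear system
$$\hat{\bm{u}}_d'(t) = dL\,\hat{\bm{u}}_d(t) - \lambda_d^*\hat{\bm{u}}_d(t) + g_d(t), \qquad g_d(t) := (I-QP)M(t)\bm{u}_d(t),$$
where $g_d$ is $T$-periodic, takes values in $\ker P$, and satisfies $\|g_d\|_\infty \leq K_1$ for some constant $K_1$ independent of $d$ (using the normalization $\|\bm{u}_d\|_\infty \leq 1$ and the boundedness of $M(t)$).

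Finally, for $d$ large enough, $dL - \lambda_d^* I$ is stable on $\ker P$ (its spectral bound is at most $-\delta d + C$, where $C$ is the bound from Lemma \ref{lem:bounded:1}), and therefore the unique bounded solution on $\ker P$, which must coincide with the $T$-periodic $\hat{\bm{u}}_d$, is given by the variation of constants formula
$$\hat{\bm{u}}_d(t) = \int_{-\infty}^{t} e^{(t-s)(dL - \lambda_d^* I)}\, g_d(s)\,\mathrm{d}s.$$
Combining this with the exponential decay bound and $|\lambda_d^*|\leq C$, I would conclude
$$\sup_{t \in \mathbb{R}}\|\hat{\bm{u}}_d(t)\|_{\mathbb{R}^n} \leq K_0 K_1 \int_{0}^{\infty} e^{-(\delta d - C) r}\,\mathrm{d}r = \frac{K_0 K_1}{\delta d - C} \longrightarrow 0 \quad \text{as } d \to +\infty.$$
The main technical point to verify is the absence of Jordan blocks at zero for $L$, which is precisely what secures a $d$-uniform spectral gap on $\ker P$ and hence a $d$-uniform constant $K_0$; fortunately, this is already contained in the proof of Lemma \ref{lem:L_K}.
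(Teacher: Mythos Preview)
Your argument is correct and rests on the same decomposition as the paper: both split $\mathbb{R}^n$ into the range of $QP$ (the paper's $X_1=\mathrm{span}\{\bm{q}_l\}$) and $\ker P$ (the paper's $X_2=\{\bm{q}:\bm{p}_l^T\bm{q}=0\}$), both observe that these subspaces are $L$-invariant with $L|_{\ker P}$ stable, and both derive the same equation for $\hat{\bm{u}}_d$ with the same forcing $g_d=(I-QP)M(t)\bm{u}_d$. The execution differs in one useful way. The paper applies variation of constants on $[0,t]$ with the semigroup of $dL$ alone, leaving $-\lambda_d^{*}\hat{\bm{u}}_d$ inside the forcing; this produces an implicit inequality that is resolved by a weighted Gronwall step, after which periodicity is used to turn a $\limsup_{t\to\infty}$ bound into a uniform one. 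You instead absorb $-\lambda_d^{*}I$ into the generator and exploit that, once $dL-\lambda_d^{*}I$ is stable on $\ker P$, the $T$-periodic $\hat{\bm{u}}_d$ must coincide with the unique bounded solution $\int_{-\infty}^{t}e^{(t-s)(dL-\lambda_d^{*}I)}g_d(s)\,\mathrm{d}s$; this yields the $O(1/d)$ bound directly, with no Gronwall argument and no separate appeal to periodicity.
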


\begin{proof}
	Our arguments are motivated by \cite{hale1986large,hale1987varying,hale1989shadow,hutson2001evolution,zhang2020asymptotic}.
	Define
	$$
	X_1:={\rm Span}\{\bm{q}_l\}_{1 \leq l \leq \alpha_0}
	\text{ and }
	X_2:=\{\bm{q} \in \mathbb{R}^n: \bm{p}_l^T \bm{q}=0,~ 1 \leq l \leq \alpha_0 \}.
	$$
	It then follows that
	$$
	\mathbb{R}^n= X_1 \oplus X_2.
	$$
	Let $S_d(t)$ be the semigroup generated by $dL$, that is, $S_d(t)=e^{dLt}$. It is easy to see that $S_d(t) X_1 \subseteq X_1$ and $S_d(t) X_2 \subseteq X_2$. According to \cite[Theorem  7.3]{daners1992abstract}, we then have
	$$
	\Vert S_d(t) \bm{\phi} \Vert_{\mathbb{R}^n} \leq C_1 e^{-\gamma_0 d t} \Vert \bm{\phi} \Vert_{\mathbb{R}^n}, ~ \forall \bm{\phi} \in X_2
	$$
	for some $\gamma_0>0$ and $C_1>0$, independent of $t$ and $d$. We multiply  \eqref{equ:eig:periodic} from left by $\bm{p}_l^T$ to obtain
	$$
	\frac{\mathrm{d} }{\mathrm{d} t} \tilde{u}_d^{l}= \bm{p}_l^T M(t) \bm{u}_d- \lambda_{d}^{*}\tilde{u}_d^{l}, ~ \forall 1 \leq l \leq \alpha_0,
	$$
	and then multiply the above equation by $\bm{q}_l$ to get
	$$
	\frac{\mathrm{d} }{\mathrm{d} t} (\tilde{u}_d^{l}\bm{q}_l)= [\bm{p}_l^T M(t) \bm{u}_d]  \bm{q}_l- \lambda_{d}^{*}\tilde{u}_d^{l}\bm{q}_l, ~ \forall 1 \leq l \leq \alpha_0.
	$$
	Adding them together yields 
	\begin{equation}\label{equ:bmu:d}
		\frac{\mathrm{d} }{\mathrm{d} t} \tilde{\bm{u}}_d= \sum_{l=1}^{\alpha_0}[\bm{p}_l^T M(t) \bm{u}_d]  \bm{q}_l- \lambda_{d}^{*}\tilde{\bm{u}}_d.
	\end{equation}
	Subtracting \eqref{equ:bmu:d} from \eqref{equ:eig:periodic}, we have
	$$
	\frac{\mathrm{d} }{\mathrm{d} t} \hat{\bm{u}}_d=
	d L \hat{\bm{u}}_d + M(t) \bm{u}_d-
	\sum_{l=1}^{\alpha_0}[\bm{p}_l^T M(t) \bm{u}_d]  \bm{q}_l- \lambda_{d}^{*}\hat{\bm{u}}_d.
	$$ 
	Clearly, for any $1 \leq l \leq \alpha_0$,
	$$
	\bm{p}_l^T L \hat{\bm{u}}_d =0,~
	\bm{p}_l^T \hat{\bm{u}}_d=\bm{p}_l^T (\bm{u}_d-\tilde{\bm{u}}_d)= \tilde{u}_d^l- \tilde{u}_d^l(\bm{p}_l^T \bm{q}_l)= 0,
	$$
	and
	$$
	\bm{p}_l^T\left(M(t) \bm{u}_d-
	\sum_{l=1}^{\alpha_0}[\bm{p}_l^T M(t) \bm{u}_d]  \bm{q}_l\right)= 
	\bm{p}_l^TM(t) \bm{u}_d- 
	\bm{p}_l^T M(t) \bm{u}_d  (\bm{p}_l^T \bm{q}_l)=0.
	$$
	That is,
	$L \hat{\bm{u}}_d \in X_2$, $\hat{\bm{u}}_d \in X_2$, and 
	$M(t) \bm{u}_d-
	\sum_{l=1}^{\alpha_0}(\bm{p}_l^T M(t) \bm{u}_d) \bm{q}_l \in X_2$.
	By Lemma \ref{lem:bounded:1}, there exists a $C_2>0$, independent of $d$ and $t$, such that
	$$
	\left \Vert M(t) \bm{u}_d-
	\sum_{l=1}^{\alpha_0}[\bm{p}_l^T M(t) \bm{u}_d]  \bm{q}_l \right\Vert_{\mathbb{R}^n} \leq C_2
	\text{ and }
	\vert \lambda_{d}^{*} \vert \leq C_2.
	$$
	In view of  the constant-variation formula, we obtain
	$$
	\hat{\bm{u}}_d(t) = S_d(t) \hat{\bm{u}}_d(0) + 
	\int_{0}^{t} S_d(t-s) \left\{
	M(s) \bm{u}_d(s) -
	\sum_{l=1}^{\alpha_0}\left[\bm{p}_l^T M(s) \bm{u}_d(s) \right]  \bm{q}_l- \lambda_{d}^{*}\hat{\bm{u}}_d(s)
	\right\} \mathrm{d} s,
	$$
	for all $t \geq 0$.
	An easy computation gives rise to
	$$
	\Vert \hat{\bm{u}}_d (t) \Vert_{\mathbb{R}^n} 
	\leq C_1 e^{-\gamma_0 d t} \Vert \hat{\bm{u}}_d (0) \Vert_{\mathbb{R}^n} +C_2 \int_{0}^{t} e^{-\gamma_0 d (t-s)} \mathrm{d} s+ C_2 \int_{0}^{t} e^{-\gamma_0 d (t-s)} \Vert \hat{\bm{u}}_d (s) \Vert_{\mathbb{R}^n}   \mathrm{d} s.
	$$
	Choose $\gamma_1 \in (0,\gamma_0)$, and define 
	$\zeta_d(t):= e^{\gamma_1 d t } \Vert \hat{\bm{u}}_d(t) \Vert_{\mathbb{R}^n}$, 
	$\overline{\zeta}_d(t):= \sup \{\zeta_d(s): 0 \leq s \leq t \}$ and 
	$$
	C_3:= \int_{0}^{\infty} e^{-s[1 -\gamma_1(\gamma_0)^{-1}]} \mathrm{d} s.
	$$
	It then follows that
	$$
	\zeta_d (t) \leq C_1 e^{-(\gamma_0-\gamma_1)dt} \zeta_d (0) + C_2 C_3(\gamma_0 d)^{-1} e^{\gamma_1 d t} + C_2 C_3 (\gamma_0 d)^{-1} \overline{\zeta}_d (t), ~ t \geq 0,
	$$
	and hence,
	$$
	\overline{\zeta}_d (t) \leq C_1 e^{-(\gamma_0-\gamma_1)dt} \zeta_d (0) + C_2 C_3(\gamma_0 d)^{-1} e^{\gamma_1 d t} + C_2 C_3 (\gamma_0 d)^{-1} \overline{\zeta}_d (t), ~ t \geq 0.
	$$
	For any $d >0$, let $\xi (d):=C_2 C_3 (\gamma_0 d)^{-1}$. Notice that $\xi (d) \rightarrow 0$ as $d \rightarrow +\infty$. From now on, we assume that $d$ is large enough such that $\xi(d) < \frac{1}{2}$, which implies that $(1 -\xi(d))^{-1} \leq 2$. This leads to 
	$$
	\begin{aligned}
		\overline{\zeta}_d (t) 
		&\leq (1 -\xi(d))^{-1}[C_1 e^{-(\gamma_0-\gamma_1)dt} \zeta_d (0) + C_2 C_3(\gamma_0 d)^{-1} e^{\gamma_1 d t}]\\
		&\leq 2[C_1 e^{-(\gamma_0-\gamma_1)dt} \zeta_d (0) + C_2 C_3(\gamma_0 d)^{-1} e^{\gamma_1 d t}],
	\end{aligned}
	$$
	and hence,
	$$
	\Vert \hat{\bm{u}}_d(t) \Vert_{\mathbb{R}^n}
	\leq e^{-\gamma_1 d t} \overline{\zeta}_d (t)
	\leq 2[C_1 e^{-(\gamma_0-\gamma_1)dt} \zeta_d (0)  e^{-\gamma_1 d t} + C_2 C_3(\gamma_0 d)^{-1}].
	$$
	Letting $t \rightarrow +\infty$, we obtain
	$$
	\limsup\limits_{t \rightarrow +\infty} \Vert \hat{\bm{u}}_d(t) \Vert_{\mathbb{R}^n} \leq 2 C_2 C_3 (\gamma_0 d)^{-1}.
	$$
	Since $\hat{\bm{u}}(t)$ is periodic in $t \in \mathbb{R}$, it  follows that
	$$
	\Vert \hat{\bm{u}}_d(t) \Vert_{\mathbb{R}^n} \leq 2 C_2 C_3 (\gamma_0 d)^{-1}.
	$$
	This yields the desired conclusion.
\end{proof}

Define $\tilde{M}(t)=(\tilde{m}_{hl}(t))_{\alpha_0 \times \alpha_0}$ by
$\tilde{m}_{hl}(t)=\bm{p}_h^T M(t) \bm{q}_l$, that is, $\tilde{M}(t)=P M(t) Q$.
Let $\{\tilde{O}(t,s): t \geq s \}$ be the evolution family on $\mathbb{R}^{\alpha_0}$ of 
$$
\frac{\mathrm{d} \bm{v}}{\mathrm{d} t}= \tilde{M}(t) \bm{v},
$$
and let $\tilde{\lambda}^{*}$ be the principal eigenvalue of 
$$
\frac{\mathrm{d} \bm{u}}{\mathrm{d} t}= \tilde{M}(t) \bm{u} -\lambda \bm{u}.
$$
It is easy to see that $\omega(\tilde{O})= \tilde{\lambda}^{*}$ due to Theorem \ref{thm:existence}. The following result indicates that $\tilde{\lambda}^{*}$ is independent of the choice of $P$ and $Q$.
\begin{lemma}\label{lem:PQO}
	Assume that {\rm (H1)$'$} holds. Let  $\widehat{P}=(\widehat{p}_{lj})_{\alpha_0 \times n}$ and $\widehat{Q}=(\widehat{q}_{ih})_{n \times \alpha_0 }$ be two nonnegative matrices such that $\widehat{P}L=0$, $L\widehat{Q}=0$ and $\widehat{P}\widehat{Q}=I$, where $I$ is an $\alpha_0 \times \alpha_0$ identity matrix. Let $\widehat{M}(t):=\widehat{P} M(t) \widehat{Q}$ and $\{\widehat{O}(t,s): t \geq s \}$ be the evolution family on $\mathbb{R}^{\alpha_0}$ of 
	$\frac{\mathrm{d} \bm{v}}{\mathrm{d} t}= \widehat{M}(t) \bm{v}$. Then $\widehat{O}(T,0)$ is similar to $\tilde{O}(T,0)$. Moreover, $\omega(\widehat{O})=\omega(\tilde{O})$.
\end{lemma}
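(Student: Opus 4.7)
The plan is to reduce everything to a time-independent similarity between $\widehat{M}(t)$ and $\tilde{M}(t)$ and then transfer that similarity through the constant-coefficient similarity transformation, step by step, to the evolution families and their Poincaré maps. The crucial observation I would make first is that the argument used in the proof of Lemma \ref{lem:PQM} actually produced an invertible $\alpha_0\times\alpha_0$ matrix $A$ (with $A^{-1}=B$) satisfying $AP=\widehat{P}$ and $QA^{-1}=\widehat{Q}$, entirely independent of the matrix being sandwiched. Applying this observation with $M$ replaced by $M(t)$ for each fixed $t\in\mathbb{R}$, and noting that $A$ depends only on $L$, $P$, $Q$, $\widehat{P}$, $\widehat{Q}$ (not on $t$), I would conclude
\[
\widehat{M}(t)=\widehat{P}M(t)\widehat{Q}=A\,P M(t) Q\,A^{-1}= A\,\tilde{M}(t)\,A^{-1}, \qquad \forall\,t\in\mathbb{R}.
\]

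Next I would show that this constant similarity propagates to the evolution families. Given any solution $\bm{v}(t)$ of $\dot{\bm{v}}=\widehat{M}(t)\bm{v}$, the change of variables $\bm{w}(t):=A^{-1}\bm{v}(t)$ satisfies
\[
\dot{\bm{w}}(t)=A^{-1}\widehat{M}(t)A\,\bm{w}(t)=\tilde{M}(t)\bm{w}(t).
\]
By uniqueness of the evolution families, this forces $\widehat{O}(t,s)=A\,\tilde{O}(t,s)\,A^{-1}$ for every pair $t\ge s$. In particular, the Poincaré maps satisfy $\widehat{O}(T,0)=A\,\tilde{O}(T,0)\,A^{-1}$, which is precisely the similarity asserted in the lemma.

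Finally, since similar matrices share the same spectrum, we have $r(\widehat{O}(T,0))=r(\tilde{O}(T,0))$. Invoking Lemma \ref{lem:w_theta:equ} with $\tau=0$ then yields
\[
\omega(\widehat{O})=\frac{\ln r(\widehat{O}(T,0))}{T}=\frac{\ln r(\tilde{O}(T,0))}{T}=\omega(\tilde{O}),
\]
which completes the proof. I do not expect any real obstacle here: the only slightly delicate point is verifying that the matrix $A$ extracted in Lemma \ref{lem:PQM} is genuinely independent of the sandwiched matrix, but this is immediate from its construction, which used only the fact that $\{\bm{p}_i^T\}$ and $\{\widehat{\bm{p}}_i^T\}$ are two bases of the left null space of $L$ (and analogously for $Q$, $\widehat{Q}$ on the right).
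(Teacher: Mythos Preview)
Your proof is correct and follows essentially the same approach as the paper: both extract the invertible matrix $A$ from Lemma~\ref{lem:PQM} satisfying $AP=\widehat{P}$ and $QA^{-1}=\widehat{Q}$, then use the change of variable $\bm{w}=A^{-1}\bm{v}$ to conclude $\widehat{O}(T,0)=A\,\tilde{O}(T,0)\,A^{-1}$. You are simply more explicit than the paper in spelling out the final step $\omega(\widehat{O})=\omega(\tilde{O})$ via Lemma~\ref{lem:w_theta:equ}.
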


\begin{proof}
	According to Lemma \ref{lem:PQM}, there exists an $\alpha_0 \times \alpha_0$ invertible matrix $A$ such that $AP=\widehat{P}$ and $QA^{-1}=\widehat{Q}$. By a change of variable $\bm{w}=A^{-1}\bm{v}$,  we then transfer $\frac{\mathrm{d} \bm{v}}{\mathrm{d} t}= \widehat{M}(t) \bm{v}$ into $\frac{\mathrm{d} \bm{w}}{\mathrm{d} t} = \tilde{M}(t) \bm{w}$.  Thus, we have  $\widehat{O}(T,0)=A[\tilde{O}(T,0)]A^{-1}$.
\end{proof}

\begin{lemma}\label{lem:eig:irreducible}
	Assume that {\rm (H1)} and {\rm (H4)} hold.
	If $\tilde{O}(T,0)$ is irreducible, then
	$\lim\limits_{d \rightarrow +\infty} \lambda_{d}^{*}= \tilde{\lambda}^{*}$. 
\end{lemma}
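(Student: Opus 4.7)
The plan is to extract limits of both the principal eigenvalue $\lambda_d^*$ and the reduced eigenfunction along a subsequence $d_k\to+\infty$, and to identify the limit as $\tilde{\lambda}^*$ using the irreducibility of $\tilde{O}(T,0)$. The key device, already prepared by Lemma \ref{lem:hat:ud}, is the splitting $\bm{u}_d=Q\tilde{u}_d+\hat{\bm{u}}_d$ with $\hat{\bm{u}}_d\to\bm{0}$ uniformly in $t$, where $\tilde{u}_d:=P\bm{u}_d=(\bm{p}_1^T\bm{u}_d,\ldots,\bm{p}_{\alpha_0}^T\bm{u}_d)^T$. This reduces the problem to a finite-dimensional ODE on $\mathbb{R}^{\alpha_0}$ in which the large parameter $d$ no longer appears.

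Lemma \ref{lem:bounded:1} gives a uniform bound on $\lambda_d^*$, so along a subsequence $d_k\to+\infty$ we have $\lambda_{d_k}^*\to\lambda_\infty\in\mathbb{R}$. Multiplying \eqref{equ:eig:periodic} from the left by $\bm{p}_l^T$ and using $\bm{p}_l^T L=\bm{0}^T$, each component of $\tilde{u}_d$ satisfies
$$
\frac{\mathrm{d}\tilde{u}_d^l}{\mathrm{d} t}=\bm{p}_l^T M(t)\bm{u}_d-\lambda_d^*\tilde{u}_d^l.
$$
Stacking these identities and inserting $\bm{u}_d=Q\tilde{u}_d+\hat{\bm{u}}_d$ yields
$$
\frac{\mathrm{d}\tilde{u}_d}{\mathrm{d} t}=\tilde{M}(t)\tilde{u}_d+PM(t)\hat{\bm{u}}_d-\lambda_d^*\tilde{u}_d.
$$
The componentwise nonnegative, $T$-periodic vector $\tilde{u}_d$ is uniformly bounded because $\|\bm{u}_d\|_\infty\le 1$ and the rows of $P$ are fixed. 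Combined with Lemma \ref{lem:hat:ud} and the boundedness of $\lambda_d^*$, the right-hand side, and hence $\mathrm{d}\tilde{u}_d/\mathrm{d} t$, is $d$-uniformly bounded. Arzel\`a--Ascoli on $[0,T]$ together with periodicity furnishes a further subsequence along which $\tilde{u}_{d_k}\to\tilde{u}_\infty$ in $C(\mathbb{R};\mathbb{R}^{\alpha_0})$.

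Passing to the limit, $\tilde{u}_\infty$ is a $T$-periodic nonnegative solution of $\mathrm{d}\tilde{u}_\infty/\mathrm{d} t=\tilde{M}(t)\tilde{u}_\infty-\lambda_\infty\tilde{u}_\infty$. Nontriviality follows from $\bm{u}_d=Q\tilde{u}_d+\hat{\bm{u}}_d$, $\|\bm{u}_d\|_\infty=1$, and $\|\hat{\bm{u}}_d\|_\infty\to 0$, which force $\|Q\tilde{u}_\infty\|_\infty=1$. Therefore $\tilde{u}_\infty(0)$ is a nonzero nonnegative eigenvector of the nonnegative (by cooperativity of $\tilde{M}(t)$, Lemma \ref{lem:L_K_M}(i)) and irreducible matrix $\tilde{O}(T,0)$, so Perron--Frobenius yields $e^{\lambda_\infty T}=r(\tilde{O}(T,0))$. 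By Theorem \ref{thm:existence} and Lemma \ref{lem:w_theta:equ} this gives $\lambda_\infty=T^{-1}\ln r(\tilde{O}(T,0))=\omega(\tilde{O})=\tilde{\lambda}^*$. Since every convergent subsequence of $\{\lambda_d^*\}$ shares the same limit, the full sequence satisfies $\lambda_d^*\to\tilde{\lambda}^*$.

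I expect the main technical obstacle to be the compactness step: the original equation for $\bm{u}_d$ contains the term $dL\bm{u}_d$, whose coefficient blows up with $d$, so a direct Arzel\`a--Ascoli argument on $\bm{u}_d$ is hopeless. The resolution, and the reason the statement holds, is that projecting onto the left null-space of $L$ kills the $dL$-term (because $PL=0$ and $LQ=0$), leaving a finite-dimensional ODE with $d$-uniform bounds to which standard limit-passage applies.
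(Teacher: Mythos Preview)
Your proof is correct and follows essentially the same route as the paper: project the eigenvalue equation onto the left null-space of $L$ to eliminate the $dL$ term, use Lemma~\ref{lem:bounded:1} and Arzel\`a--Ascoli to extract a convergent subsequence of $(\lambda_d^*,\tilde u_d)$, pass to the limit with the help of Lemma~\ref{lem:hat:ud}, and invoke Perron--Frobenius for the irreducible matrix $\tilde O(T,0)$. Your write-up is in fact slightly more careful than the paper's, since you explicitly verify nontriviality of the limit $\tilde u_\infty$ via $\|Q\tilde u_\infty\|_\infty=1$ and spell out the subsequence-to-full-sequence passage, both of which the paper leaves implicit.
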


\begin{proof}
	For any $1 \leq h \leq \alpha_0$, we multiply  \eqref{equ:eig:periodic} from left by $\bm{p}_h^T$ to obtain
	\begin{equation}\label{equ:udh}
		\frac{\mathrm{d} }{\mathrm{d} t}\tilde{u}_d^h=  \bm{p}_h^TM(t) \bm{u}_d -\lambda_{d}^{*} \tilde{u}_d^h.
	\end{equation}
	Then there exists $C_1>0$ such that
	$$
	\left\vert \frac{\mathrm{d} }{\mathrm{d} t}\tilde{u}_d^h \right\vert \leq C_1, ~\forall1 \leq  h \leq \alpha_0.
	$$
	By the Ascoli–Arzel\`{a} theorem (see, e.g., \cite[Theorem I.28]{reed1980methods}), it follows that there exists a sequence $d_m \rightarrow +\infty$ such that $\lambda_{d_m} \rightarrow \lambda_{\infty}$ and $\vert \tilde{u}^h_{d_m} (t) - \tilde{u}^h_{\infty} (t)\vert \rightarrow 0$ uniformly for $t \in \mathbb{R}$, $1 \leq h \leq \alpha_0$, as $m \rightarrow +\infty$, for some $\lambda_{\infty}$ and $\tilde{u}^h_{\infty} \in C(\mathbb{R},\mathbb{R}_+)$ with $\tilde{u}^h_{\infty}(t+T)=\tilde{u}^h_{\infty}(t)$, $\forall t \in \mathbb{R},~ 1 \leq h \leq \alpha_0$.
	We integrate  \eqref{equ:udh} from $0$ to $t$ to obtain
	$$
	\tilde{u}_d^h(t)- \tilde{u}_d^h(0)
	=\int_{0}^{t} [\bm{p}_h^TM(s) \tilde{\bm{u}}_d(s)+ \bm{p}_h^TM(s) \hat{\bm{u}}_d(s) -\lambda_{d}^{*} \tilde{u}_d^h(s) ]\mathrm{d} s.
	$$
	By Lemma \ref{lem:hat:ud}, letting $d_m \rightarrow +\infty$, for any $1 \leq h \leq \alpha_0$, we have
	$$
	\tilde{u}_{\infty}^h(t)- \tilde{u}_{\infty}^h(0)
	=\int_{0}^{t} 
	\left[\bm{p}_h^TM(s) \left(\sum_{l=1}^{\alpha_0} \tilde{u}_{\infty}^l(s)  \bm{q}_l \right)-\lambda_{\infty} \tilde{u}_{\infty}^h(s) \right]\mathrm{d} s,
	$$ 
	and hence,
	$$
	\frac{\mathrm{d} }{\mathrm{d} t} \tilde{u}_{\infty}^h(t)
	=\sum_{l=1}^{\alpha_0}[\bm{p}_h^TM(t) \bm{q}_l ] \tilde{u}_{\infty}^l(t) -\lambda_{\infty} \tilde{u}_{\infty}^h(t).
	$$ 
	Letting $\bm{\phi}=(\tilde{u}_{\infty}^1 (0),\cdots, \tilde{u}_{\infty}^{\alpha_0}(0))^T$, we see that
	$$
	\bm{\phi}=e^{-\lambda_{\infty} T}\tilde{O}(T,0) \bm{\phi}.
	$$
	With the irreducibility of $\tilde{O}(T,0)$,
	the Perron-Frobenius theorem (see, e.g., \cite[Theorem 4.3.1]{smith2008monotone}) then leads to $\lambda_{\infty}=\tilde{\lambda}^{*}$.
\end{proof}

To remove the irreducibility condition on $\tilde{O}(T,0)$  in Lemma \ref{lem:eig:irreducible}, below we prove the same conclusion as in Lemma \ref{lem:bounded:1} under weaker conditions.

\begin{lemma}
	Assume that {\rm (H1)$'$} and {\rm (H4)} hold. Then there exists some $C>0$ such that $ \vert \lambda_{d}^{*} \vert \leq C$.
\end{lemma}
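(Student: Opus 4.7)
We treat the upper and lower bounds on $\lambda_{d}^{*}$ separately. The lower bound follows from a direct comparison. Set $\underline{m} := \min_{1 \leq i \leq n} \min_{t \in [0,T]} m_{ii}(t)$. Since (H4) implies $m_{ij}(t) \geq 0$ for $i \neq j$, we have $M(t) \geq \underline{m}\,I$ entrywise, so Lemma \ref{lem:comparison} gives $\lambda_{d}^{*} \geq s(dL + \underline{m}\,I)$. Using (H1)$'$, which includes $s(L) = 0$, together with the identity $s(dL + cI) = d\,s(L) + c$ for $c \in \mathbb{R}$, we conclude $\lambda_{d}^{*} \geq \underline{m}$.

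For the upper bound, the plan is to bound $\|\mathbb{O}_{d}(T,0)\|$ uniformly in $d \geq 0$; since $r(\mathbb{O}_{d}(T,0)) \leq \|\mathbb{O}_{d}(T,0)\|$ and $\lambda_{d}^{*} = T^{-1}\ln r(\mathbb{O}_{d}(T,0))$ by Theorem \ref{thm:existence}, this yields a uniform upper bound on $\lambda_{d}^{*}$. The essential step is to show that the semigroup $S_{d}(t) := e^{dLt}$ is uniformly bounded in the operator norm, i.e., there exists $C_{2} > 0$ independent of $d \geq 0$ and $t \geq 0$ with $\|S_{d}(t)\| \leq C_{2}$. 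This is where (H1)$'$ enters decisively: by Lemma \ref{lem:L_K}, the zero eigenvalue of $L$ has geometric multiplicity equal to its algebraic multiplicity $\alpha_{0}$, hence is semisimple, while every other eigenvalue of $L$ has strictly negative real part. Using the splitting $\mathbb{R}^{n} = X_{1} \oplus X_{2}$ introduced in the proof of Lemma \ref{lem:hat:ud}, the semigroup $S_{d}(t)$ reduces to the identity on $X_{1} = \ker L$ and satisfies $\|S_{d}(t)|_{X_{2}}\| \leq C_{1} e^{-\gamma_{0} d t}$ exactly as in that proof, so the uniform bound follows.

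With the uniform bound on $S_{d}(t)$ in hand, the variation-of-parameters formula $\bm{v}(t) = S_{d}(t)\bm{v}(0) + \int_{0}^{t} S_{d}(t-s) M(s) \bm{v}(s)\,\mathrm{d} s$ for solutions of \eqref{equ:sys:periodic}, combined with Gronwall's inequality, yields $\|\bm{v}(T)\| \leq C_{2} e^{C_{2} T \|M\|_{\infty}}\|\bm{v}(0)\|$, where $\|M\|_{\infty} := \sup_{t \in [0,T]}\|M(t)\|$. Consequently $\|\mathbb{O}_{d}(T,0)\| \leq C_{3}$ uniformly in $d$, providing the desired upper bound on $\lambda_{d}^{*}$. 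Combining with $\lambda_{d}^{*} \geq \underline{m}$ produces a constant $C$ with $|\lambda_{d}^{*}| \leq C$ for all $d \geq 0$.

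The main technical obstacle is establishing the uniform boundedness of $S_{d}(t)$, which depends critically on the semisimplicity of the zero eigenvalue of $L$ ensured by (H1)$'$ via Lemma \ref{lem:L_K}; beyond this point, the remaining steps are routine variation-of-parameters and Gronwall estimates. This argument replaces the direct Perron-Frobenius computation in Lemma \ref{lem:bounded:1}, where the stronger zero-column-sum hypothesis in (H1) allowed the explicit a priori bound $\lambda_{d}^{*} \leq n\overline{M}$ that is no longer available under (H1)$'$.
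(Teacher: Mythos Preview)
Your argument is correct and takes a genuinely different route from the paper's proof. The paper splits into two cases according to whether $\Lambda_0^c=\emptyset$ or not. In Case~1 it builds a strictly positive left eigenvector $\bm{p}$ of $L$ from the block Perron eigenvectors of the $L_{ll}$ and then compares $dL+M(t)$ above and below with explicit constant matrices $M^1,M^2$ for which $\bm{p}^T(dL+M^i)=c_i\,\bm{p}^T$, reading off the bounds via Perron--Frobenius. In Case~2 the lower bound is obtained by restricting to the subsystem indexed by $\Lambda_0$ (where Case~1 applies), while the upper bound requires constructing a perturbed matrix $\overline{L}$ so that a strictly positive left eigenvector exists globally. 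Your proof bypasses this case analysis entirely: the lower bound comes from the trivial comparison $M(t)\geq \underline{m}\,I$ together with $s(L)=0$, and the upper bound follows from the uniform operator bound $\|e^{dLt}\|\leq C_2$ (a consequence of the semisimplicity of the zero eigenvalue guaranteed by Lemma~\ref{lem:L_K} under (H1)$'$, combined with the fact that every nonzero eigenvalue of the block lower triangular cooperative matrix $L$ has strictly negative real part) followed by a Gronwall estimate on the variation-of-parameters formula. Your approach is shorter and more conceptual, trading the explicit Perron--Frobenius constants of the paper for the abstract spectral splitting $X_1\oplus X_2$ already set up in Lemma~\ref{lem:hat:ud}; the paper's approach, on the other hand, yields concrete numerical bounds and stays closer to the matrix-theoretic spirit of Lemma~\ref{lem:bounded:1}. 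Both proofs ultimately rest on the same structural input from Lemma~\ref{lem:L_K}, namely that under (H1)$'$ the zero eigenvalue of $L$ is semisimple.
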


\begin{proof}
	We proceed according to two cases:
	
	{\it Case 1.} $\Lambda_0^c= \emptyset$. The proof is motivated by the arguments for Lemma \ref{lem:bounded:1}.
	Define $$\overline{M}:= \max_{1 \leq i,j \leq n} \{\max_{ t\in \mathbb{R}} m_{ij}(t)\}, \text{ and }\underline{M}:= \min_{1 \leq i,j \leq n} \{\min_{ t\in \mathbb{R}} m_{ij}(t)\}.$$
	For any $1 \leq i \leq \alpha_0$, choose $\bm{p}^i\gg0$ such that $(\bm{p}^i)^T L_{ii}= \bm{0}^T$. Let $\bm{p}=((\bm{p}^1)^T,\cdots,(\bm{p}^{\alpha_0})^T)^T=(p_{1},\cdots,p_{n})^T$. Thus, $\bm{p}^T L=\bm{0}^T$.
	Without loss of generality, we assume that $\min_{1 \leq j \leq n}p_j=  1$. Define two $n \times n$ matrices $M^1:=(m_{ij}^1)_{n\times n} $ by $m_{ij}^{1}=\overline{M} p_j$, $\forall 1 \leq i,j \leq n$ and $M^2:=\mathrm{diag}(\underline{M},\cdots,\underline{M} )$. Let $\overline{\lambda}$ and $\underline{\lambda}$ be the principal eigenvalue of 
	$dL+ M^1$ and $dL+ M^2$, respectively. In view of $\bm{p}^T(dL+ M^1)= (\sum_{j =1}^{n} p_j)\overline{M}\bm{p}^T$ and $\bm{p}^T(dL+ M^2)=\underline{M}\bm{p}^T$, the Perron-Frobenius theorem (see, e.g., \cite[Theorem 4.3.1]{smith2008monotone}) implies that $\overline{\lambda}= (\sum_{j =1}^{n} p_j)\overline{M}$ and $\underline{\lambda}= \underline{M}$.  By Lemma \ref{lem:comparison}, it easily follows that $\underline{\lambda}\leq \lambda \leq \overline{\lambda}$.
	
	{\it Case 2.} $\Lambda_0^c \neq \emptyset$.
	Without loss of generality, in view of (H1)$'$, we assume that $\Lambda_0^c=\{1,\cdots,\alpha_0^c\}$ and $\Lambda_0=\{\alpha_0^c+1,\cdots,\alpha\}$, and still write $\nu_l:=\alpha_0^c+l$, $ 1\leq l \leq \alpha_0$, as Lemma \ref{lem:L_K}. Let us first prove that $\lambda_{d}^{*}$ has a lower bound independent of $d$. We split the matrix-valued function $M(t)$ into a block form as follows
	$$
	M(t)=
	\left(
	\begin{array}{ccc}
		M_{11}(t)& \cdots & M_{1\alpha}(t) \\ 
		\vdots& \ddots& \vdots\\
		M_{\alpha 1}(t)& \cdots & M_{\alpha \alpha}(t)\\
	\end{array} 
	\right),
	$$
	where $M_{hl}$ is an $n_h \times n_l $ matrix for $1 \leq h,l  \leq \alpha$. Define a matrix $\hat{L}= \mathrm{diag}(L_{\nu_1\nu_1},\cdots,L_{\alpha\alpha})$ and a matrix-valued function $\hat{M}(t)$ by
	$$
	\hat{M}(t)=\left(
	\begin{array}{ccc}
		M_{\nu_1\nu_1}(t)& \cdots & M_{\nu_1 \alpha}(t) \\ 
		\vdots& \ddots& \vdots\\
		M_{\alpha\nu_1}(t)& \cdots & M_{\alpha \alpha}(t)\\
	\end{array} 
	\right).
	$$  
	Let $\hat{\lambda}_{d}^{*}$ be the principal eigenvalue of 
	$$
	\frac{\mathrm{d} \bm{u}}{\mathrm{d} t}=d \hat{L} \bm{u} + \hat{M}(t) \bm{u} -\lambda \bm{u}.
	$$
	Since $s(L_{ll})=0$ for all $\nu_1 \leq l \leq \alpha$, and $\hat{M}(t)$ is cooperative for any $t \in \mathbb{R}$, it then follows from Lemma \ref{lem:comparison} that $\hat{\lambda}_{d}^{*} \leq \lambda_{d}^{*}$. By the proof of Case 1, $\hat{\lambda}_{d}^{*}$ has a lower bound independent of $d$, so does $\lambda_{d}^{*}$.
	
	We next show that $\lambda_{d}^{*}$ has an upper bound independent of $d$.
	Define a matrix $\overline{L}$ by
	$$
	\overline{L}=
	\left(
	\begin{array}{ccc}
		\overline{L}_{11}& \cdots & \overline{L}_{1\alpha} \\ 
		\vdots& \ddots& \vdots\\
		\overline{L}_{\alpha 1}& \cdots & \overline{L}_{\alpha \alpha}\\
	\end{array} 
	\right),
	$$
	where $\overline{L}_{hl}=L_{hl}$, for $ 1 \leq h \leq \alpha_0^c$, $1 \leq l \leq \alpha $ and $ \nu_1 \leq h,l \leq \alpha$ and $\overline{L}_{hl}=L_{hl} + \bm{e}_{n_h} \bm{e}_{n_l}^T$ for $ \nu_1 \leq h \leq \alpha$, $1 \leq l \leq \alpha_0^c $. Here $\bm{e}_{n_h} =(1,\cdots,1)^T$ is an $n_h$-dimensional vector. For any $1 \leq l \leq \alpha_0$,  choose $\bm{p}^{\nu_l} \gg 0$ such that $(\bm{p}^{\nu_l})^T L_{\nu_l\nu_l}=\bm{0}^T$. Since all elements of $\overline{L}_{hl}$ are positive for $ \nu_1 \leq h \leq \alpha$, $1 \leq l \leq \alpha_0^c $ and $L_{ll}$ is irreducible for $1 \leq l \leq \alpha_0^c$,  by the arguments similar to those for \eqref{equ:p:Lhl}, there exist $\bm{p}^i \gg 0$, $1 \leq i \leq \alpha_0^c$ such that
	$
	\sum_{i =1}^\alpha (\bm{p}^i)^T \overline{L}_{ih}=\bm{0}^T,~ 1 \leq  h  \leq \alpha_0^c. 
	$
	Define 
	$
	\bm{p}:=((\bm{p}^1)^T,\cdots,(\bm{p}^\alpha)^T)^T
	$, where $\bm{p}^i$ is an $n_i$-dimensional vector. By repeating the arguments for the upper bound in Case 1, we obtain the desired conclusion.
\end{proof}
\begin{remark}\label{rem:eig:irreducible}
	Assume that {\rm (H1)$'$} and {\rm (H4)} hold. If $\tilde{O}(T,0)$ is irreducible, then
	$\lim\limits_{d \rightarrow +\infty} \lambda_{d}^{*}= \tilde{\lambda}^{*}$.
\end{remark}

The following result provides a powerful tool to analyze the matrix $\tilde{O}(T,0)$ in the case where  it is reducible.
\begin{lemma}\label{lem:widehat:O}
	For any $\alpha_0$-dimensional vector $\bm{b}=(b_1,\cdots,b_{\alpha_0})^T$
	with $1 \leq b_i \leq \alpha_0$ and $b_i \neq b_j$ if $i \neq j$, define  $\widehat{M} (t)=(\widehat{m}_{hl}(t))_{\alpha_0 \times \alpha_0}$ by
	$\widehat{m}_{hl}(t)=\bm{p}_{b_h}^T M(t) \bm{q}_{b_l}$.  Let $\{\widehat{O}(t,s): t\geq s \}$ be the evolution family of $\frac{\mathrm{d} \bm{v}}{\mathrm{d} t}= \widehat{M}(t) \bm{v}$ on $\mathbb{R}^{\alpha_0}$. Then the matrix $\tilde{O}(T,0)$ is similar to the matrix $\widehat{O}(T,0)$. If, in addition, $\tilde{O}(T,0)$ is reducible, then $\widehat{O}(T,0)$ is a block lower triangular matrix after choosing a suitable $\bm{b}$.
\end{lemma}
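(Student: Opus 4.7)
My plan is to reduce both assertions to the observation that choosing $\bm{b}$ amounts to conjugating the coefficient matrix by a permutation, combined with the Frobenius normal form for reducible nonnegative matrices.

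First, given $\bm{b}=(b_1,\dots,b_{\alpha_0})^T$, let $\Pi$ be the $\alpha_0\times\alpha_0$ permutation matrix with entries $\Pi_{hj}=\delta_{b_h,j}$. Then the rows of $\Pi P$ are $\bm{p}_{b_1}^T,\dots,\bm{p}_{b_{\alpha_0}}^T$ and the columns of $Q\Pi^T$ are $\bm{q}_{b_1},\dots,\bm{q}_{b_{\alpha_0}}$, so the definition $\widehat{m}_{hl}(t)=\bm{p}_{b_h}^T M(t)\bm{q}_{b_l}$ gives
$$
\widehat{M}(t)=\Pi P M(t) Q \Pi^T=\Pi\tilde{M}(t)\Pi^T, \quad \forall t\in\mathbb{R}.
$$
The change of variable $\bm{w}=\Pi\bm{v}$ then transforms $\frac{\mathrm{d}\bm{w}}{\mathrm{d} t}=\widehat{M}(t)\bm{w}$ into $\frac{\mathrm{d}\bm{v}}{\mathrm{d} t}=\tilde{M}(t)\bm{v}$, whence $\widehat{O}(t,s)=\Pi\,\tilde{O}(t,s)\,\Pi^{-1}$ for all $t\geq s$. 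Setting $t=T$ and $s=0$ yields the desired similarity of $\widehat{O}(T,0)$ and $\tilde{O}(T,0)$.

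For the second statement, I would first observe that by Theorem \ref{thm:A}\,(ii) (equivalently Lemma \ref{lem:L_K_M}\,(i)), $\tilde{M}(t)=PM(t)Q$ is cooperative for every $t\in\mathbb{R}$, because $M(t)$ is cooperative by (H4). A standard application of the comparison principle for linear nonautonomous cooperative systems then shows that the evolution family $\tilde{O}(t,s)$ is entrywise nonnegative for all $t\geq s$; in particular $\tilde{O}(T,0)\geq 0$ entrywise. Now, assuming $\tilde{O}(T,0)$ is reducible, the Frobenius normal form for reducible nonnegative matrices (see, e.g., \cite[Section 2.3]{berman1994nonnegative}) furnishes a permutation matrix $\Pi$ such that $\Pi\,\tilde{O}(T,0)\,\Pi^T$ is block lower triangular with irreducible diagonal blocks. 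Such a $\Pi$ corresponds uniquely to a vector $\bm{b}$ satisfying the hypotheses of the lemma (take $b_h$ to be the unique $j$ with $\Pi_{hj}=1$), and for this choice the first paragraph gives $\widehat{O}(T,0)=\Pi\,\tilde{O}(T,0)\,\Pi^T$, which is exactly in block lower triangular form as required.

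I do not expect any serious obstacle: the only substantive input beyond bookkeeping with permutation matrices is the entrywise nonnegativity of $\tilde{O}(T,0)$, which is immediate from the cooperativity of $\tilde{M}(t)$. Everything else reduces to invoking the Frobenius normal form and translating the permutation back into a choice of~$\bm{b}$.
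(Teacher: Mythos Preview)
Your proposal is correct. The paper actually states this lemma without proof, presumably because it regards the result as an immediate consequence of the preceding material: Lemma~\ref{lem:L_K_M}\,(ii) already establishes the permutation-similarity at the level of the coefficient matrices $\tilde{M}$ and $\widehat{M}$, and Lemma~\ref{lem:PQO} carries out exactly the change-of-variable argument you describe (with a general invertible $A$ in place of your permutation $\Pi$) to pass from similarity of coefficients to similarity of the time-$T$ maps. Your write-up makes these two steps explicit and adds the one ingredient the paper leaves tacit---namely that $\tilde{O}(T,0)$ is entrywise nonnegative so that the Frobenius normal form applies---which is indeed immediate from the cooperativity of $\tilde{M}(t)$ established in Lemma~\ref{lem:L_K_M}\,(i). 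In short, your argument is precisely the natural elaboration of what the paper takes for granted.
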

The following two results are straightforward consequence of \cite[Lemmas 3.5 and 3.7]{zhang2020asymptotic}.
\begin{lemma}\label{lem:split}
	Write $A:=\tilde{O}(T,0)=(a_{ij})_{\alpha_0 \times \alpha_0}$ and let
	$$
	A=
	\left(
	\begin{matrix}
		A_{11} &  \cdots & A_{1 \tilde{n}}\\
		\vdots & \ddots &\vdots\\
		A_{\tilde{n} 1} & \cdots & A_{\tilde{n} \tilde{n}}
	\end{matrix}
	\right),\quad 
	\text{and} \,  \, \,
	\tilde{M}(t)=
	\left(
	\begin{matrix}
		\tilde{M}_{11}(t) &  \cdots & \tilde{M}_{1 \tilde{n}}(t)\\
		
		\vdots & \ddots &\vdots\\
		\tilde{M}_{\tilde{n} 1} (t)&  \cdots & \tilde{M}_{\tilde{n} \tilde{n}}(t)
	\end{matrix}
	\right),
	$$
	where $A_{ii}$ is an $\alpha_i \times \alpha_i$ matrix with $\sum_{i=1}^{\tilde{n}} \alpha_i =\alpha_0$, and $\tilde{M}_{ii}(t)$ is an $\alpha_i \times \alpha_i$ matrix-valued function of $t \in \mathbb{R}$. If $A_{ij}$ are zero matrices for all $1 \leq i < j \leq \tilde{n}$, then so are $\tilde{M}_{ij}(t)$ for any $ t\in \mathbb{R}$. Moreover, let $\tilde{\lambda}_{i}^{*}$ be the principal eigenvalue of $\frac{\mathrm{d} \bm{u}}{\mathrm{d} t}=\tilde{M}_{ii} (t) \bm{u} - \lambda \bm{u},~t>0$, then $\tilde{\lambda}_{i}^{*}= \frac{\ln r(A_{ii})}{T}$ and $\tilde{\lambda}^{*}=\max_{1 \leq i \leq \tilde{n}} \tilde{\lambda}_{i}^{*}$.
\end{lemma}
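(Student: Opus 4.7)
The plan is two steps, corresponding to the two claims. The core difficulty is to propagate the block-triangular pattern of the period map $A=\tilde O(T,0)$ back to the driving matrix $\tilde M(t)$ at every time $t$; once this is in hand, the spectral statements follow from decoupling and the formula $\tilde\lambda^*=\tfrac{1}{T}\ln r(\tilde O(T,0))$ supplied by Theorem \ref{thm:existence}.

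\textbf{Step 1 (structural step; the main obstacle).} Because $\tilde M(t)=PM(t)Q$ with $P,Q$ nonnegative and $M(t)$ cooperative by (H4), Lemma \ref{lem:L_K_M}(i) gives that $\tilde M(t)$ is cooperative for every $t$, and consequently the evolution family $\tilde O(t,s)$ is entrywise nonnegative for $t\geq s$. Suppose for contradiction that $[\tilde M_{ij}(t_0)]_{p,q}>0$ for some $i<j$, some $t_0\in\mathbb{R}$, and some position $(p,q)$ inside the block. By continuity and $T$-periodicity we may assume $t_0\in(0,T)$ and that this entry stays positive on an open interval $I\subset(0,T)$. Let $k$ be the global coordinate corresponding to position $q$ of block $j$ and $k'$ the one corresponding to position $p$ of block $i$, and set $\bm v(t)=\tilde O(t,0)\bm e_k\geq\bm 0$. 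The inequality $\dot v_k\geq \tilde m_{kk}(t)v_k$ with $v_k(0)=1$ yields $v_k(t)>0$ for all $t\geq 0$. Then $\dot v_{k'}\geq \tilde m_{k'k'}(t)v_{k'}+\tilde m_{k'k}(t)v_k$ with $v_{k'}(0)=0$ gives, by variation of parameters,
\[
v_{k'}(T)\;\geq\;\int_0^T\exp\!\left(\int_s^T\tilde m_{k'k'}(\tau)\,d\tau\right)\tilde m_{k'k}(s)\,v_k(s)\,ds\;>\;0,
\]
since the integrand is strictly positive on $I$. This contradicts $A_{ij}=0$, which would force $v_{k'}(T)=0$. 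Hence $\tilde M_{ij}(t)\equiv 0$ for all $i<j$ and all $t\in\mathbb{R}$.

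\textbf{Step 2 (decoupling and spectra).} Once $\tilde M(t)$ is block lower triangular, the ODE $\dot{\bm v}=\tilde M(t)\bm v$ makes $\tilde O(t,0)$ block lower triangular for every $t\geq 0$. For an initial condition $\bm v_0$ supported in block $i$, the components $\bm v_j(t)=\tilde O_{ji}(t,0)(\bm v_0)_i$ vanish for $j<i$, so the $i$-th block satisfies $\dot{\bm v}_i=\tilde M_{ii}(t)\bm v_i$ (terms with $j>i$ vanish by the structure of $\tilde M$, those with $j<i$ vanish because $\bm v_j\equiv 0$). Therefore $A_{ii}$ is exactly the period map of the subsystem $\dot{\bm w}=\tilde M_{ii}(t)\bm w$. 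Since $\tilde M_{ii}(t)$ inherits cooperativity and $T$-periodicity, Theorem \ref{thm:existence} (applied with $d=0$) yields $\tilde\lambda_i^*=\tfrac{1}{T}\ln r(A_{ii})$. Finally, the block-triangular form of $A$ implies $\mathrm{spec}(A)=\bigcup_i\mathrm{spec}(A_{ii})$, so $r(A)=\max_i r(A_{ii})$; applying Theorem \ref{thm:existence} to the full system gives $\tilde\lambda^*=\tfrac{1}{T}\ln r(A)=\max_i\tilde\lambda_i^*$, which completes the argument.
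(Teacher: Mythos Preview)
Your proof is correct. The paper does not actually give its own argument for this lemma; it simply records the result as ``a straightforward consequence of [Lemmas 3.5 and 3.7]{zhang2020asymptotic}.'' You have instead supplied a self-contained proof: Step~1 is a clean nonnegativity/comparison argument showing that a single positive off-diagonal entry $\tilde m_{k'k}(t_0)$ forces $a_{k'k}>0$ via the variation-of-constants lower bound, and Step~2 is the standard block-triangular decoupling together with Theorem~\ref{thm:existence} applied at $d=0$. Both steps are sound, and in particular the use of cooperativity of $\tilde M(t)$ (via Lemma~\ref{lem:L_K_M}(i)) to guarantee $\bm v(t)\geq \bm 0$ and $\tilde m_{k'k}\geq 0$ is exactly what makes the inequality in Step~1 go through. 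The only minor cosmetic point is that when invoking Theorem~\ref{thm:existence} for the subsystem $\dot{\bm w}=\tilde M_{ii}(t)\bm w$ you are implicitly taking $L=0$ so that (H1) is vacuously satisfied; this is harmless but worth stating.
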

\begin{lemma}\label{lem:conti_limits}
	Let $g$ be a continuous function on $(a,b)$ and write $g_+=\limsup_{x \rightarrow b} g(x)$ and $g_-=\liminf_{x \rightarrow b} g(x)$. Then for any $c \in [g_-,g_+]$, there exists a sequence $x_k \rightarrow b$ as $k \rightarrow \infty$ with $x_k \in (a,b)$ such that $\lim\limits_{k \rightarrow \infty} g(x_k)=c$.
\end{lemma}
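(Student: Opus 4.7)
The plan is to split on whether $c$ equals one of the extreme values $g_\pm$ or lies strictly between them. If $c = g_+$, the definition of $\limsup$ directly yields a sequence $x_k \to b$ with $x_k \in (a,b)$ and $g(x_k) \to g_+ = c$; the case $c = g_-$ is entirely symmetric using $\liminf$. So the only real content is the case $c \in (g_-, g_+)$, which I would handle by an intermediate value argument combined with a standard localization near $b$.

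For $c \in (g_-, g_+)$, I would proceed as follows. Fix any sequence $\delta_k \downarrow 0$ with $\delta_k < b-a$. Since $\limsup_{x \to b} g(x) = g_+ > c$, the set $\{x \in (b-\delta_k, b) : g(x) > c\}$ is nonempty for every $k$, so I can pick $y_k \in (b-\delta_k, b)$ with $g(y_k) > c$. Similarly, using $\liminf_{x \to b} g(x) = g_- < c$, I can pick $z_k \in (b-\delta_k, b)$ with $g(z_k) < c$. Let $I_k$ be the closed interval with endpoints $y_k$ and $z_k$; then $I_k \subset (b-\delta_k, b) \subset (a,b)$, and $g$ is continuous on $I_k$ with $g(z_k) < c < g(y_k)$. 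The intermediate value theorem produces $x_k \in I_k$ with $g(x_k) = c$, and since $I_k \subset (b - \delta_k, b)$, one has $x_k \to b$. The constant sequence $g(x_k) = c$ trivially converges to $c$, giving the desired sequence.

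The only subtlety worth flagging is the case in which $g_+ = +\infty$ or $g_- = -\infty$. The statement still makes sense for any real $c$, and the selection step above goes through unchanged: the strict inequalities $g_+ > c$ and $g_- < c$ are interpreted in the extended real line, while the intermediate value theorem is applied only on compact subintervals of $(a,b)$ where $g$ is automatically real-valued by continuity. I do not foresee any substantial obstacle; the proof is essentially a packaging of the intermediate value theorem with the two-sided density of values near $g_\pm$ guaranteed by the definitions of $\limsup$ and $\liminf$, and the lemma is recorded here only because it will be used to upgrade subsequential limits into full limits in the asymptotic analysis that follows.
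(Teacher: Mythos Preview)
Your argument is correct. The paper does not actually supply a proof of this lemma; it simply records it as a ``straightforward consequence of \cite[Lemmas 3.5 and 3.7]{zhang2020asymptotic}'' and moves on. Your elementary intermediate-value-theorem proof is self-contained and in fact more informative than the citation: the split into the endpoint cases $c=g_\pm$ (handled by the definitions of $\limsup$/$\liminf$) and the strict-interior case (handled by sandwiching $c$ between $g(z_k)<c<g(y_k)$ on shrinking intervals near $b$ and invoking IVT) is exactly the right structure, and your remark on the extended-real cases $g_\pm=\pm\infty$ is a nice touch that the statement implicitly requires. Nothing is missing.
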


Now we are in a position to prove the main result of this section.
\begin{theorem}\label{thm:eig}
	Assume that {\rm (H1)} and {\rm (H4)} hold.
	Then the following statements are valid:
	\begin{itemize}
		\item[\rm (i)] $\lim\limits_{d \rightarrow 0^+} \lambda_{d}^{*}= \lambda_{0}^{*}$ and $\lim\limits_{d \rightarrow \hat{d}} \lambda_{d}^{*}= \lambda_{\hat{d}}^{*}$ for any $\hat{d}>0$.
		\item[\rm (ii)] $\lim\limits_{d \rightarrow +\infty} \lambda_{d}^{*}= \tilde{\lambda}^{*}$. 
	\end{itemize}
\end{theorem}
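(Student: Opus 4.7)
The plan is to treat (i) by continuous dependence on parameters and (ii) by splitting into an upper bound $\limsup_{d\to+\infty}\lambda_d^*\le\tilde\lambda^*$ (valid in general) and a matching lower bound with a subcase analysis. For (i), the linear cooperative ODE $\dot{\bm v}=(dL+M(t))\bm v$ has coefficients that are affine in $d$, so by standard continuous dependence the Poincar\'e map $\mathbb{O}_d(T,0)$ depends continuously on $d\in[0,\infty)$; it is entrywise nonnegative, and its spectral radius, being the Perron eigenvalue, is a continuous function of the entries. Since Theorem~\ref{thm:existence} gives $\lambda_d^*=T^{-1}\ln r(\mathbb{O}_d(T,0))$, continuity of $d\mapsto\lambda_d^*$ at $d=0^+$ and at every $\hat d>0$ follows at once.

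For the upper bound in (ii), I would replay the compactness argument used in Lemma~\ref{lem:eig:irreducible} but without the irreducibility assumption. Along any sequence $d_m\to+\infty$ with $\lambda_{d_m}^*\to\lambda_\infty$ (such a sequence exists by boundedness of $\{\lambda_d^*\}$), normalize the eigenvector so that $\max_{i,t}u_{d,i}(t)=1$, apply Lemma~\ref{lem:hat:ud} to discard $\hat{\bm u}_{d_m}$, and use Arzel\`a--Ascoli on the equibounded and equicontinuous scalars $\tilde u_{d_m}^h=\bm p_h^T\bm u_{d_m}$. This produces uniform limits $\tilde u_\infty^h$ for which $\bm\phi=(\tilde u_\infty^1(0),\ldots,\tilde u_\infty^{\alpha_0}(0))^T$ satisfies $\tilde{O}(T,0)\bm\phi=e^{\lambda_\infty T}\bm\phi$. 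The normalization together with $\hat{\bm u}_{d_m}\to\bm 0$ forces $\bm\phi\neq\bm 0$, so $\bm\phi\ge\bm 0$ is a nontrivial nonnegative eigenvector, and the Perron--Frobenius bound $e^{\lambda_\infty T}\le r(\tilde{O}(T,0))=e^{\tilde\lambda^*T}$ yields $\lambda_\infty\le\tilde\lambda^*$.

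For the lower bound in (ii), if $\tilde{O}(T,0)$ is irreducible, Remark~\ref{rem:eig:irreducible} already gives $\lim\lambda_d^*=\tilde\lambda^*$. When $\tilde{O}(T,0)$ is reducible, Lemma~\ref{lem:widehat:O} lets me choose $\bm b$ so that $\widehat{O}(T,0)$ is block lower triangular, and Lemma~\ref{lem:split} (combined with a further permutation into Frobenius normal form) simultaneously puts $\tilde M(t)$ into block lower triangular form with irreducible diagonal blocks $A_{ii}$ and $\tilde\lambda^*=\max_i\tilde\lambda_i^*$, where $\tilde\lambda_i^*=T^{-1}\ln r(A_{ii})$. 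For each $i$, Lemma~\ref{lem:L_K_M}(iv) produces a principal-submatrix subsystem $(L^i,M^i(t))$ with $L^i$ satisfying (H1)$'$, $M^i$ satisfying (H4), and reduced Poincar\'e map exactly $A_{ii}$; Remark~\ref{rem:eig:irreducible} applied to this subsystem then yields $\lim_{d\to+\infty}\lambda_d^{*,i}=\tilde\lambda_i^*$. After reordering indices so that $(L^i,M^i)$ occupies the top-left corner, the block-splitting comparison in Lemma~\ref{lem:comparison} gives $\lambda_d^*\ge\lambda_d^{*,i}$ for all $d\ge 0$. Letting $d\to+\infty$ and maximizing over $i$ yields $\liminf\lambda_d^*\ge\tilde\lambda^*$, which combined with the upper bound gives $\lim_{d\to+\infty}\lambda_d^*=\tilde\lambda^*$.

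The delicate step is this reducible subcase of the lower bound: for each irreducible Frobenius block $A_{ii}$ of $\tilde{O}(T,0)$ one must exhibit a genuine cooperative sub-ODE of $\dot{\bm v}=dL\bm v+M(t)\bm v$ whose reduced Poincar\'e map is exactly $A_{ii}$. The precise index bookkeeping supplied by Lemma~\ref{lem:L_K_M}(iv) is what makes $\tilde M^i=\tilde M_{ii}$ and turns $(L^i,M^i)$ into an admissible input for Remark~\ref{rem:eig:irreducible}, so that the irreducible-case limit and the block-splitting comparison can be chained to deliver the desired lower bound.
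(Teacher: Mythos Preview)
Your argument is correct. Part (i) and the lower-bound half of part (ii) follow the paper's proof essentially verbatim: continuous dependence of the Poincar\'e map plus matrix perturbation theory for (i), and the block decomposition via Lemmas~\ref{lem:widehat:O}, \ref{lem:split}, \ref{lem:L_K_M}(iv) together with the subsystem comparison from Lemma~\ref{lem:comparison} for the lower bound.

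The genuine difference is in how you obtain $\limsup_{d\to+\infty}\lambda_d^*\le\tilde\lambda^*$. The paper proceeds in two separate steps: first (Step~1) it shows the limit $\lambda_\infty$ exists, by arguing via Lemma~\ref{lem:conti_limits} that otherwise every value in $[\lambda_-,\lambda_+]$ would produce an eigenvalue of $\tilde O(T,0)$, contradicting finiteness of the spectrum; then (Step~2) it bounds $\lambda_\infty$ from above by perturbing $M$ to $M^\epsilon=M+\epsilon$, which forces $\tilde O^\epsilon(T,0)$ to be irreducible so that Lemma~\ref{lem:eig:irreducible} applies, and lets $\epsilon\to 0^+$. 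You instead take any subsequential limit $\lambda_\infty$, run the compactness argument of Lemma~\ref{lem:eig:irreducible} to get a nontrivial nonnegative eigenvector $\bm\phi$ of $\tilde O(T,0)$ with eigenvalue $e^{\lambda_\infty T}$, and invoke the elementary bound $e^{\lambda_\infty T}\le r(\tilde O(T,0))=e^{\tilde\lambda^* T}$. This collapses the paper's Steps~1 and~2 into a single direct estimate and avoids the perturbation entirely; the paper's approach, by contrast, isolates the existence of the limit as a stand-alone fact and illustrates a perturbation-to-irreducibility device that can be reused elsewhere. One small point worth making explicit in your write-up: the nontriviality $\bm\phi\neq\bm 0$ follows because $\|\tilde{\bm u}_{d_m}\|_\infty\to 1$ forces the uniform limit $(\tilde u_\infty^1,\ldots,\tilde u_\infty^{\alpha_0})$ to be not identically zero, and a nontrivial solution of the linear ODE $\dot{\bm v}=(\tilde M(t)-\lambda_\infty)\bm v$ cannot vanish at $t=0$.
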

\begin{proof}
	(i) We only prove that $\lim\limits_{d \rightarrow 0^+} \lambda_{d}^{*}= \lambda_{0}^{*}$, since $\lim\limits_{d \rightarrow \hat{d}} \lambda_{d}^{*}= \lambda_{\hat{d}}^{*}$ can be derived for any $\hat{d}>0$ in a similar way. Since solutions of \eqref{equ:sys:periodic}  depend continuously upon parameters (see, e.g., \cite[Section I.3]{hale1969ordinary}), it follows that $\mathbb{O}_d(T,0)$ converges to $\mathbb{O}_0(T,0)$ in the matrix norm as $d \rightarrow 0^+$.
	For the definition of the matrix norm, we refer to \cite[Section II.2]{steward1990matrices}. Therefore, the desired statement (i) follows from the perturbation theory of matrix (see, e.g., \cite{kato1976perturbation,steward1990matrices}).
	
	(ii) Our proof is motivated by the arguments for \cite[Theorem 3.3]{zhang2020asymptotic}. Since 
	the conclusion has been proved in the case where $\tilde{O}(T,0)$ is irreducible in Lemma \ref{lem:eig:irreducible}, we only need to consider the case where that $\tilde{O}(T,0)$ is reducible.
	We proceed in three steps.
	
	{\it Step 1.} $\lambda_{\infty}:=\lim_{d\rightarrow +\infty} \lambda_{d}^{*}$ exists. According to Lemma \ref{lem:bounded:1},
	both $\lambda_+:=\limsup_{d\rightarrow +\infty} \lambda_{d}^{*}$ and $\lambda_-:=\liminf_{d \rightarrow +\infty} \lambda_{d}^{*}$ exist, and $C_1 \leq \lambda_-,\lambda_+ \leq C_2$ for some $C_1$ and $C_2$. It suffices to prove that $\lambda_-=\lambda_+$. Suppose that
	$\lambda_-< \lambda_+$, for any $\hat{\lambda} \in [\lambda_-, \lambda_+]$,
	by repeating the arguments in the proof of Lemma \ref{lem:eig:irreducible}, there exists a positive vector $\bm{\phi}$ such that
	$$
	\bm{\phi}=e^{-\hat{\lambda}T}\tilde{O}(T,0) \bm{\phi}.
	$$
	This implies that $e^{\hat{\lambda}T}$ is an eigenvalue of $\tilde{O}(T,0)$ for any $\hat{\lambda} \in [\lambda_-, \lambda_+]$, which is impossible.
	
	{\it Step 2.} $\lambda_{\infty} \leq \tilde{\lambda}^{*}$. For any given $\epsilon>0$, let $M^{\epsilon}=(m_{ij}^{\epsilon})_{n \times n}$ and $\tilde{M}^{\epsilon}=(\tilde{m}_{hl}^{\epsilon})_{\alpha_0 \times \alpha_0}$ be two continuous matrix-valued functions of $t \in \mathbb{R}$ with $m_{ij}^{\epsilon}(t)=m_{ij}(t)+\epsilon$, $\forall t \in \mathbb{R}$ and $\tilde{m}_{hl}^{\epsilon}(t)=\bm{p}_h^T M^{\epsilon}(t) \bm{q}_l$, $\forall t \in \mathbb{R}$. Let $\tilde{\lambda}^{*}(\epsilon)$ be the principal eigenvalue of the eigenvalue problem 
	$$
	\frac{\mathrm{d} \bm{u}}{\mathrm{d} t}=\tilde{M}^{\epsilon}(t) \bm{u} - \lambda \bm{u}, ~t>0.
	$$ 
	Let $\lambda_{d}^{*}(\epsilon)$ be the principal eigenvalue of the eigenvalue problem \eqref{equ:eig:periodic} with $M$ replaced by $M^{\epsilon}$. Clearly, $\lambda_{d}^{*}(\epsilon) \geq \lambda_{d}^{*}$ for all $\epsilon>0$ due to Lemma \ref{lem:comparison}. It then follows that
	$$\tilde{\lambda}^{*}(\epsilon)
	=\lim\limits_{d \rightarrow +\infty} \lambda_{d}^{*}(\epsilon) 
	\geq \lim\limits_{d \rightarrow +\infty} \lambda_{d}^{*}
	=\lambda_{\infty}, \forall \epsilon >0.$$
	Since $\tilde{\lambda}^{*}(\epsilon) \rightarrow \tilde{\lambda}^{*}$ as $\epsilon \rightarrow 0^+$, we conclude that
	$\lambda_{\infty}\leq \lim\limits_{\epsilon \rightarrow 0^+}\tilde{\lambda}^{*}(\epsilon)= \tilde{\lambda}^{*}$.
	
	{\it Step 3.} $\lambda_{\infty} \geq \tilde{\lambda}^{*}$. We only consider the case of $\alpha_0^c>0$, since the case of $\alpha_0^c=0$ can be addressed in a similar way.
	Without loss of generality, by Lemma \ref{lem:L_K}, we assume that $\Lambda_0^c=\{1,\cdots,\alpha_0^c\}$ and $\Lambda_0=\{\alpha_0^c+1,\cdots,\alpha\}$.
	Based on Lemma \ref{lem:widehat:O}, we can redefine  $\tilde{M} (t)=(\tilde{m}_{hl}(t))_{\alpha_0 \times \alpha_0}$ by
	$\tilde{m}_{hl}(t)=\bm{p}_{b_h}^T M(t) \bm{q}_{b_l}$ for a specified $\alpha_0$-dimensional vector $\bm{b}$ such that
	\begin{itemize}
		\item[(1)] $\bm{b}=(b_1,\cdots,b_{\alpha_0})^T$ with $1 \leq b_i \leq \alpha_0$ and $b_i \neq b_j$ if $i \neq j$.
		\item[(2)] The matrix $A:=\tilde{O}(T,0)$ can be split into
		$$
		A=
		\left(
		\begin{matrix}
			A_{11} &  \cdots & A_{1 \tilde{n}}\\
			\vdots & \ddots &\vdots\\
			A_{\tilde{n} 1} & \cdots & A_{\tilde{n} \tilde{n}}
		\end{matrix}
		\right),
		$$
		where $A_{ij}$ is an $\alpha_i \times \alpha_j$ matrix with $\sum_{i=1}^{\tilde{n}} \alpha_i =\alpha_0$, $A_{ij}=0$ for all $1 \leq i < j \leq \tilde{n}$ and $A_{ii}$ is irreducible for all $1 \leq i \leq \tilde{n}$.
		\item[(3)] $\bm{b}=((\bm{b}^1)^T,\cdots,(\bm{b}^{\tilde{n}})^T)^T$, where  $\bm{b}^i=(b^i_1,\cdots,b^i_{\alpha_i})^T$ with $b^i_1< \cdots < b^i_{\alpha_i}$ for all $1 \leq i  \leq \tilde{n}$.
	\end{itemize}  
	Here (3) is achievable because both (1) and (2) are still valid by exchanging the components of $\bm{b}^i$ due to Lemma \ref{lem:L_K_M}.
	By Lemma \ref{lem:split}, $\tilde{M}(t)$ can be split into
	$$
	\tilde{M}(t)=
	\left(
	\begin{matrix}
		\tilde{M}_{11}(t) &  \cdots & \tilde{M}_{1 \tilde{n}}(t)\\
		
		\vdots & \ddots &\vdots\\
		\tilde{M}_{\tilde{n} 1} (t)&  \cdots & \tilde{M}_{\tilde{n} \tilde{n}}(t)
	\end{matrix}
	\right),
	$$
	where $\tilde{M}_{ij}$ is an $\alpha_i \times \alpha_j$ matrix-valued function with $\tilde{M}_{ij}(t)=0$ for all $1 \leq i < j \leq \tilde{n}$. Let $\tilde{\lambda}_{i}^{*}$ be the principal eigenvalue of the eigenvalue problem
	$$\frac{\mathrm{d} \bm{u}}{\mathrm{d} t}=\tilde{M}_{ii}(t) \bm{u} - \lambda \bm{u},~t>0.$$ 
	Thus, Lemma \ref{lem:split} yields that $\tilde{\lambda}_{i}^{*}= \frac{\ln r(A_{ii})}{T}$ and $\tilde{\lambda}^{*}=\max_{1 \leq i \leq \tilde{n}} \tilde{\lambda}_{i}^{*}$.
	For any $1 \leq i \leq \tilde{n}$ and $t \in \mathbb{R}$, define $L^{i}$, $M^{i}(t)$ and $\tilde{M}^{i}(t)$ as $L^{i}$, $M^{i}$ and $\tilde{M}^{i}$ in Lemma \ref{lem:L_K_M}. For any $1 \leq i \leq \tilde{n}$, $1 \leq l \leq \alpha_i$, choose $\bm{p}_{l,i}$ and $\bm{q}_{l,i}$ in the same way as in Lemma \ref{lem:L_K_M}.
	It then follows from Lemma \ref{lem:L_K_M} that for any $1 \leq i \leq \tilde{n}$,  $\tilde{M}_{ii}(t)=\tilde{M}^{i}(t)$, $\forall t \in \mathbb{R}$, and for any $1\leq l \leq \alpha_i$
	$$
	\bm{p}_{l,i}^T\bm{q}_{l,i}=1,~ \bm{p}_{l,i}^T\bm{q}_{h,i}=0,~ h\neq l,~
	L^{i}\bm{q}_{l,i}=\bm{0}, \text{ and }
	\bm{p}_{l,i}^T L^{i}=\bm{0}^T.
	$$
	Let $\lambda_{d,i}^{*}$ be the principal eigenvalue of 
	\begin{equation}\label{equ:eig:periodic:i}
		\frac{\mathrm{d} \bm{u}}{\mathrm{d} t}=d L^i \bm{u} + M^i(t) \bm{u} -\lambda \bm{u}.
	\end{equation}
	Since $A_{ii}$ is irreducible and (H1)$'$ holds for $L^i$, we conclude that $\lambda_{d,i}^{*} \rightarrow \tilde{\lambda}_{i}^{*}$ as $d \rightarrow +\infty$ due to Lemma \ref{lem:eig:irreducible} and Remark \ref{rem:eig:irreducible}. It then  follows from Lemma \ref{lem:comparison} that $\lambda_{d,i}^{*}  \leq \lambda_{d}^{*}$.
	Notice that
	$$
	\tilde{\lambda}_{i}^{*}=\lim_{d \rightarrow +\infty} \lambda_{d,i}^{*} \leq \lim_{d \rightarrow +\infty} \lambda_{d}^{*} =\lambda_{\infty}, ~1 \leq i \leq \tilde{n}.
	$$
	Thus, Lemma \ref{lem:split} implies that $\tilde{\lambda}^{*}=\max_{1 \leq i \leq \tilde{n}} \tilde{\lambda}_{i}^{*} \leq \lambda_{\infty}$. 
\end{proof}

\begin{remark}\label{rem:eig:reducible}
	Assume that {\rm (H1)$'$} and {\rm (H4)} hold. Then $\lim\limits_{d \rightarrow 0^+} \lambda_{d}^{*}= \lambda_{0}^{*}$, $\lim\limits_{d \rightarrow \hat{d}} \lambda_{d}^{*}= \lambda_{\hat{d}}^{*}$ for any $\hat{d}>0$, and 
	$\lim\limits_{d \rightarrow +\infty} \lambda_{d}^{*}= \tilde{\lambda}^{*}$.
\end{remark}

\section{The basic reproduction ratio}\label{sec:R0}

In this section, we study the continuity of the basic reproduction ratio with respect to parameters and investigate its asymptotic behavior  as the dispersal rates go to infinity for a periodic patch model. 

In order to discuss the continuity of the basic reproduction ratio with respect to parameters, we introduce a metric space $(\mathcal{X}, \rho_{\mathcal{X}})$ with metric $\rho_{\mathcal{X}}$.
For any given $\chi \in \mathcal{X}$, let  $F_{\chi}(t)=(f_{ij,\chi}(t))_{n\times n}$
and $V_{\chi}(t)=(v_{ij,\chi}(t))_{n\times n}$  be two continuous $n\times n$ matrix-valued functions of $t \in \mathbb{R}$ such that 

\begin{enumerate}
	\item[(H2)$'$]  $F_{\chi}(t+T)=F_{\chi}(t)$, $V_{\chi}(t+T)=V_{\chi}(t)$, 
	$F_{\chi}(t)$ is nonnegative, and $-V_{\chi}(t)$ is cooperative
	for all $\chi \in \mathcal{X}$ and $t \in \mathbb{R} $.
\end{enumerate}

Let  $\tilde{F}_{\chi}(t)=(\tilde{f}_{hl,\chi}(t))_{\alpha_0 \times \alpha_0}= PF_{\chi}(t)Q$ and $\tilde{V}_{\chi}(t)=(\tilde{v}_{hl,\chi}(t))_{\alpha_0 \times \alpha_0}= PV_{\chi}(t)Q$ for all $t \in \mathbb{R}$.
For any $d \geq 0 $, let $\{\Phi_{d,\chi}(t,s): t \geq s \}$ be the evolution family on $\mathbb{R}^{n}$ of  $\frac{\partial \bm{v}}{\partial t}= d L \bm{v} - V_{\chi}(t) \bm{v}$, $t\geq s$.
We use $\{\tilde{\Phi}_{\chi} (t,s): t \geq s\}$ to denote the evolution family on $\mathbb{R}^{\alpha_0}$ of 
$ \frac{\partial \bm{v}}{\partial t}= - \tilde{V}_{\chi}(t) \bm{v}$, $t\geq s$.   
We further assume that
\begin{enumerate}
	\item[(H3)$'$]  $\omega (\Phi_{d,\chi})<0$ for all $d \geq 0$ and $\omega (\tilde{\Phi}_{\chi})<0$.
\end{enumerate}

It is easy to see that (H2)$'$ and (H3)$'$ are generalizations of (H2) and (H3), respectively.
For any $\mu >0$ and $d \geq 0$, let $\{\mathbb{U}_{d,\chi}^{\mu}(t,s): t \geq s \} $ be the evolution family on $\mathbb{R}^{n}$ of 
\begin{equation}\label{equ:U_kappa}
	\frac{\partial \bm{v}}{\partial t}= d L \bm{v} - V_{\chi}(t) \bm{v}+ \frac{1}{\mu} F_{\chi} (t) \bm{v},  ~t \geq s,
\end{equation}
and let $\{\tilde{\mathbb{U}}_{\chi}^{\mu}(t,s): t \geq s \} $ be the evolution family on $\mathbb{R}^{\alpha_0}$ of 
\begin{equation}\label{equ:U_infty}
	\frac{\partial \bm{v}}{\partial t}= - \tilde{V}_{\chi}(t) \bm{v} +\frac{1}{\mu} \tilde{F}_{\chi} (t) \bm{v},~t \geq s.
\end{equation}

Define
$$
\mathbb{X}:=\{ \bm{u} \in C(\mathbb{R},\mathbb{R}^n): \bm{u}(t)=\bm{u}(t+T), ~t \in \mathbb{R} \},
$$ 
$$
\tilde{\mathbb{X}}:=\{ \bm{u} \in C(\mathbb{R},\mathbb{R}^{\alpha_0}): \bm{u}(t)=\bm{u}(t+T), ~t \in \mathbb{R} \},
$$
$$
\mathbb{X}_+:=\{ \bm{u} \in C(\mathbb{R},\mathbb{R}_+^n): \bm{u}(t)=\bm{u}(t+T),~t \in \mathbb{R}\},
$$
and
$$
\tilde{\mathbb{X}}_+:=\{ \bm{u} \in C(\mathbb{R},\mathbb{R}_+^{\alpha_0}): \bm{u}(t)=\bm{u}(t+T),~t \in \mathbb{R}\}
$$
with the maximum norm $\Vert \bm{u} \Vert_{\mathbb{X}} = \max_{1 \leq i \leq n}\max_{0 \leq t \leq T} \vert u_i (t) \vert$ for $\bm{u}=(u_1,u_2,\cdots,u_n)^T$ and $\Vert \bm{u} \Vert_{\tilde{\mathbb{X}}} = \max_{1 \leq i \leq \alpha_0}\max_{0 \leq t \leq T} \vert u_i (t) \vert$ for $\bm{u}=(u_1,u_2,\cdots,u_{\alpha_0})^T$. Then $(\mathbb{X},\mathbb{X}_+)$ and $(\tilde{\mathbb{X}},\tilde{\mathbb{X}}_+)$ are two ordered Banach spaces.

For any $d \geq 0$, we define a bounded linear positive operator $\mathbb{Q}_{d,\chi}: \mathbb{X} \rightarrow \mathbb{X}$ by
$$
[\mathbb{Q}_{d,\chi} \bm{u}](t): = \int_{0}^{+\infty} \Phi_{d,\chi}(t,t-s)F_{\chi}(t-s)\bm{u}(t-s) \mathrm{d} s, ~ t \in \mathbb{R},~ \bm{u} \in \mathbb{X},
$$
and $\mathcal{R}_0(d,\chi):=r(\mathbb{Q}_{d,\chi})$.
Define $\tilde{\mathbb{Q}}_{\chi} : \tilde{\mathbb{X}} \rightarrow \tilde{\mathbb{X}}$ by
$$
[\tilde{\mathbb{Q}}_{\chi} \bm{u}] (t):= \int_{0}^{+\infty} \tilde{\Phi}_{\chi}(t,t-s)\tilde{F}_{\chi}(t-s) \bm{u}(t-s) \mathrm{d} s, ~ t \in \mathbb{R},~ \bm{u} \in \tilde{\mathbb{X}},
$$
and $\tilde{\mathcal{R}}_0(\chi):=r(\tilde{\mathbb{Q}}_{\chi})$.
The subsequent result is a straightforward consequence of \cite[Theorems 2.1 and 2.2]{wang2008threshold}.

\begin{lemma}\label{lem:R0:equivalent}
	Assume that {\rm (H1)}, {\rm (H2)$'$} and {\rm (H3)$'$} hold. Then the following statements are valid for any $\mu >0$ and $\chi \in \mathcal{X}$:
	\begin{itemize}
		\item[\rm (i)]  For any $d \geq 0$, $\mathcal{R}_0(d,\chi) -\mu$ has the same sign as
		$\omega(\mathbb{U}_{d,\chi}^{\mu})$.
		\item[\rm (ii)]  $\tilde{\mathcal{R}}_0(\chi) -\mu$ has the same sign as
		$\omega(\tilde{\mathbb{U}}_{\chi}^{\mu})$.
	\end{itemize}
\end{lemma}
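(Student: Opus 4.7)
The plan is to reduce both (i) and (ii) to the abstract threshold characterization of Wang and Zhao \cite{wang2008threshold}, which asserts that for a cooperative periodic linear system with nonnegative gain, the sign of $\mathcal{R}_0 - 1$ (defined as the spectral radius of the next-generation operator) matches the sign of the exponential growth bound of the full linearized evolution family. The key observation is that the $\mu$ in our definition of $\mathbb{U}_{d,\chi}^{\mu}$ merely rescales the gain matrix, so passing to $F_\chi / \mu$ converts the threshold $1$ into the threshold $\mu$.

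For part (i), I would first verify the hypotheses of \cite[Theorems 2.1 and 2.2]{wang2008threshold} for the triple $(L, -V_\chi(t), F_\chi(t)/\mu)$ acting on $\mathbb{R}^n$: periodicity, $-V_\chi(t) + dL$ cooperative, and $F_\chi(t)/\mu$ nonnegative follow immediately from (H1) and (H2)$'$, while the required stability $\omega(\Phi_{d,\chi}) < 0$ is precisely (H3)$'$. Since the next-generation operator associated with the scaled gain $F_\chi/\mu$ is $\tfrac{1}{\mu}\mathbb{Q}_{d,\chi}$ by linearity, its spectral radius equals $\mathcal{R}_0(d,\chi)/\mu$. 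Wang and Zhao's characterization then gives that $\mathcal{R}_0(d,\chi)/\mu - 1$ has the same sign as $\omega(\mathbb{U}_{d,\chi}^{\mu})$, and multiplying by the positive constant $\mu$ yields the claim.

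For part (ii), the only extra task is to check that the reduced system $-\tilde{V}_\chi(t) + \tfrac{1}{\mu}\tilde{F}_\chi(t)$ on $\mathbb{R}^{\alpha_0}$ also lies in the Wang--Zhao framework. Nonnegativity of $\tilde{F}_\chi(t) = P F_\chi(t) Q$ is immediate from $P, Q \geq 0$ (Theorem \ref{thm:A}(i)) and the nonnegativity of $F_\chi(t)$. That $-\tilde{V}_\chi(t)$ is cooperative is exactly Theorem \ref{thm:A}(ii) applied to the cooperative matrix $-V_\chi(t)$. The stability condition $\omega(\tilde{\Phi}_\chi) < 0$ is in (H3)$'$. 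Repeating the rescaling argument from part (i) with $\tilde{F}_\chi$ replaced by $\tilde{F}_\chi/\mu$ and $\tilde{\mathbb{Q}}_\chi$ replaced by $\tilde{\mathbb{Q}}_\chi/\mu$ then yields (ii).

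There is no substantial obstacle here; the proof is essentially bookkeeping plus a scaling identity. The only point that warrants a line of explanation is the factor $\mu$, which must be tracked carefully: the sign equivalence $\mathcal{R}_0 - 1 \sim \omega$ in the original Wang--Zhao setup becomes $\mathcal{R}_0 - \mu \sim \omega$ after replacing $F$ with $F/\mu$, because the spectral radius scales linearly and multiplying by the positive number $\mu$ preserves sign.
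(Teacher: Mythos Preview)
Your proposal is correct and matches the paper's approach exactly: the paper states this lemma as ``a straightforward consequence of \cite[Theorems 2.1 and 2.2]{wang2008threshold}'' without further elaboration, and your rescaling argument (replacing $F_\chi$ by $F_\chi/\mu$ so that the Wang--Zhao threshold $1$ becomes $\mu$) is precisely the bookkeeping needed to extract the statement from those theorems. Your explicit verification of the hypotheses via (H1), (H2)$'$, (H3)$'$ and Theorem~\ref{thm:A} for the reduced system is more detailed than what the paper provides, but nothing substantively different.
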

Now we are ready to prove the main result of this section.
\begin{theorem}\label{thm:R0}
	Assume that {\rm (H1)}, {\rm (H2)$'$} and {\rm (H3)$'$} hold, and there exists $\chi_0 \in \mathcal{X}$
	such that  
	$V_{\chi}$ and $F_{\chi}$ converge to $V_{\chi_0}$ and $F_{\chi_0}$ in the matrix norm as $\chi \rightarrow \chi_0$, respectively. Then the following statements are vaild:
	\begin{itemize}
		\item[\rm (i)] $\lim\limits_{d\rightarrow 0^+,\chi \rightarrow \chi_0} \mathcal{R}_0(d,\chi)= \mathcal{R}_0(0,\chi_0)$, and
		$\lim\limits_{d\rightarrow \hat{d},\chi \rightarrow \chi_0} \mathcal{R}_0(d,\chi)= \mathcal{R}_0(\hat{d},\chi_0)$ for any $\hat{d}>0$. 
		\item[\rm (ii)]
		$\lim\limits_{d\rightarrow +\infty,\chi \rightarrow \chi_0} \mathcal{R}_0(d,\chi)= \tilde{R}_{0}(\chi_0).$
	\end{itemize}
\end{theorem}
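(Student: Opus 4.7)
The plan is to reduce both parts to Lemma \ref{lem:R0:continuity} by exploiting the sign characterization in Lemma \ref{lem:R0:equivalent}: for each admissible parameter $(d,\chi)$ the quantity $\mathcal{R}_0(d,\chi)-\mu$ has the same sign as $\omega(\mathbb{U}_{d,\chi}^\mu)$, and similarly $\tilde{\mathcal{R}}_0(\chi)-\mu$ has the same sign as $\omega(\tilde{\mathbb{U}}_\chi^\mu)$. Taking $H(\mu,\theta)$ to be these exponential growth bounds, one easily sees that either $\mathcal{R}_0(\theta)>0$ and (P1) holds with $\mu(\theta)=\mathcal{R}_0(\theta)$, or $\mathcal{R}_0(\theta)=0$ and (P2) holds; in either case the abstract zero $\mu(\theta)$ in Lemma \ref{lem:R0:continuity} coincides with the relevant basic reproduction ratio. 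After this reduction the whole theorem becomes continuity of $H(\mu,\cdot)$ at the appropriate basepoint for every fixed $\mu>0$.

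For part (i), I would equip $\Theta_{\mathrm{fin}}:=[0,\infty)\times\mathcal{X}$ with the product metric and set $H(\mu,(d,\chi)):=\omega(\mathbb{U}_{d,\chi}^\mu)=\tfrac{1}{T}\ln r(\mathbb{U}_{d,\chi}^\mu(T,0))$, using Lemma \ref{lem:w_theta:equ}. Standard continuous dependence of ODE solutions on parameters (\cite{hale1969ordinary}) shows that $\mathbb{U}_{d,\chi}^\mu(T,0)$ converges in matrix norm as $(d,\chi)\to(\hat{d},\chi_0)$; combining this with continuity of the spectral radius under matrix perturbation (\cite{kato1976perturbation}) yields continuity of $H(\mu,\cdot)$ at $(\hat{d},\chi_0)$, and Lemma \ref{lem:R0:continuity} then delivers the conclusion for any $\hat{d}\geq 0$.

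Part (ii) is more delicate. I would extend the parameter space by compactifying in $d$, taking $\Theta_\infty:=[0,\infty]\times\mathcal{X}$ with metric $\rho_\Theta((d,\chi),(d',\chi'))=\bigl|\tfrac{1}{1+d}-\tfrac{1}{1+d'}\bigr|+\rho_{\mathcal{X}}(\chi,\chi')$, and define $H(\mu,(d,\chi)):=\omega(\mathbb{U}_{d,\chi}^\mu)$ for $d<\infty$ and $H(\mu,(\infty,\chi)):=\omega(\tilde{\mathbb{U}}_\chi^\mu)$. The dichotomy (P1)--(P2) again follows from Lemma \ref{lem:R0:equivalent}, so the task reduces to continuity of $H(\mu,\cdot)$ at $\theta_0=(\infty,\chi_0)$ for every $\mu>0$. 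This is the main obstacle, because Theorem \ref{thm:eig}(ii) treats the one-parameter limit $d\to\infty$ only for fixed $\chi$, whereas here $d\to\infty$ and $\chi\to\chi_0$ simultaneously. To handle the joint limit, I would write $M_\chi^\mu(t):=-V_\chi(t)+\tfrac{1}{\mu}F_\chi(t)$ and let $E$ denote the $n\times n$ all-ones matrix. The convergence $V_\chi\to V_{\chi_0}$ and $F_\chi\to F_{\chi_0}$ in matrix norm implies that for every $\delta>0$ there is a neighborhood of $\chi_0$ in $\mathcal{X}$ on which
\[
M_{\chi_0}^\mu(t)-\delta E \leq M_\chi^\mu(t) \leq M_{\chi_0}^\mu(t)+\delta E
\]
holds entrywise uniformly in $t\in\mathbb{R}$. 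Applying Lemma \ref{lem:comparison} to the cooperative coefficient $dL+M_\chi^\mu(t)$ then sandwiches $\omega(\mathbb{U}_{d,\chi}^\mu)$ between the analogous quantities for the $\chi_0$-system perturbed by $\pm\delta E$. For each fixed $\delta$ the perturbed coefficient remains cooperative and its zero column-sum structure is preserved by $L$, so Theorem \ref{thm:eig}(ii) sends $d\to\infty$ and yields the principal eigenvalue of $\tilde{M}_{\chi_0}^\mu(t)\pm\delta PEQ$; a final continuity argument as $\delta\to 0^+$, based on ODE continuous dependence on parameters in the $\alpha_0$-dimensional limiting system together with continuity of the spectral radius, returns $\omega(\tilde{\mathbb{U}}_{\chi_0}^\mu)$. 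Taking $\limsup$ over any sequence $(d_n,\chi_n)\to(\infty,\chi_0)$ followed by $\delta\to 0^+$, and symmetrically $\liminf$ via $-\delta E$, establishes the desired joint continuity at $\theta_0$ and completes the proof through Lemma \ref{lem:R0:continuity}.
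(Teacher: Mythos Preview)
Your overall architecture matches the paper's exactly: both parts are reduced to Lemma~\ref{lem:R0:continuity} via the sign characterization in Lemma~\ref{lem:R0:equivalent}, turning the problem into continuity of $(d,\chi)\mapsto\omega(\mathbb{U}_{d,\chi}^{\mu})$ at the relevant basepoint for each fixed $\mu>0$. The paper is in fact terser than you are on this final continuity step, simply citing ``arguments similar to those in the Claim of \cite[Theorem~4.1]{zhang2020asymptotic} and Theorem~\ref{thm:eig}''; your part~(i) via continuous dependence plus spectral-radius continuity is exactly what those arguments amount to.

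For part~(ii) you give a more explicit route than the paper by decoupling the $d$- and $\chi$-limits with an entrywise sandwich, then invoking Theorem~\ref{thm:eig}(ii) as a black box on the $\chi_0$-system perturbed by $\pm\delta E$. This is clean and modular, but the claim ``the perturbed coefficient remains cooperative'' is false for the lower barrier: if some off-diagonal entry of $M_{\chi_0}^{\mu}(t)$ vanishes (which is allowed, since $-V_{\chi_0}$ is merely cooperative and $F_{\chi_0}$ merely nonnegative), then $M_{\chi_0}^{\mu}(t)-\delta E$ has a strictly negative off-diagonal entry. In that case neither Lemma~\ref{lem:comparison} nor Theorem~\ref{thm:eig}(ii) applies to the lower system, and the $\liminf$ bound is unjustified as written.

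The repair is immediate. Replace the lower barrier by the matrix $\check{M}^{\delta}(t)$ with $\check{M}^{\delta}_{ii}(t)=(M_{\chi_0}^{\mu}(t))_{ii}-\delta$ and $\check{M}^{\delta}_{ij}(t)=\max\bigl((M_{\chi_0}^{\mu}(t))_{ij}-\delta,\,0\bigr)$ for $i\neq j$. This matrix is cooperative by construction, still satisfies $\check{M}^{\delta}(t)\le M_{\chi}^{\mu}(t)$ entrywise (the off-diagonals of $M_{\chi}^{\mu}$ are nonnegative, hence dominate both $0$ and $(M_{\chi_0}^{\mu})_{ij}-\delta$), and converges uniformly to $M_{\chi_0}^{\mu}$ as $\delta\to 0^{+}$. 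With this substitution Lemma~\ref{lem:comparison} and Theorem~\ref{thm:eig}(ii) apply and your sandwich closes.
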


\begin{proof}
	(i) Without loss of generality, we only prove
	$$\lim\limits_{d\rightarrow 0^+,\chi \rightarrow \chi_0} \mathcal{R}_0(d,\chi)= \mathcal{R}_0(0,\chi_0).$$  In this case, we choose  $\Theta=\mathbb{R}_+ \times \mathcal{X}$ and $\theta_0:=(0,\chi_0)\in \Theta$. Define $$H(\mu,\theta):=\omega(\mathbb{U}_{d,\chi}^{\mu}),~ \forall
	\mu \in \mathbb{R}_+,~ \theta:=(d,\chi)\in \Theta.$$
	According to Lemma \ref{lem:R0:equivalent}, for any $d \geq 0$ and $\chi \in \mathcal{X}$, $H(\mathcal{R}_0(d,\chi),(d,\chi))=0$, $H(\mu,(d,\chi))<0$ for all $\mu > \mathcal{R}_0(d,\chi)$ and $H(\mu,(d,\chi))>0$ for all $\mu < \mathcal{R}_0(d,\chi)$.
	By Lemma \ref{lem:R0:continuity}, it suffices to show that for any $ \mu >0$, $\lim\limits_{d \rightarrow 0^+,\chi \rightarrow \chi_0}\omega(\mathbb{U}_{d,\chi}^{\mu}) = \omega(\mathbb{U}^{\mu}_{0,\chi_0}) $, which can be derived by the arguments similar to those in the Claim of \cite[Theorem 4.1]{zhang2020asymptotic} and Theorem \ref{thm:eig}.
	
	(ii) Now we choose  $\Theta=({\rm Int}(\mathbb{R}_+) \times \mathcal{X})\cup \{\theta_0\}$, where $\theta_0:=(0,\chi_0)$. Let $\kappa= \frac{1}{d}$ and 
	$\theta:=(\kappa,\chi)\in \Theta$ with $d >0$. Define 
	$$H(\mu,\theta):=\omega(\mathbb{U}_{d,\chi}^{\mu}),~ \forall
	\mu \in \mathbb{R}_+,~ \theta:=(\kappa,\chi)\in \Theta \setminus \{ \theta_0 \},$$ 
	and 
	$$
	H(\mu,\theta_0)= \omega(\tilde{\mathbb{U}}_{\chi}^{\mu}),~ \forall
	\mu \in \mathbb{R}_+.
	$$
	According to Lemma \ref{lem:R0:equivalent}, for any $\kappa > 0$ and $\chi \in \mathcal{X}$, $H(\mathcal{R}_0(\kappa,\chi),(\kappa,\chi))=0$, $H(\mu,(\kappa,\chi))<0$ for all $\mu > \mathcal{R}_0(\kappa,\chi)$ and $H(\mu,(\kappa,\chi))>0$ for all $\mu < \mathcal{R}_0(\kappa,\chi)$.
	By Lemma \ref{lem:R0:continuity}, it suffices to show that for any $ \mu >0$, $\lim\limits_{d \rightarrow +\infty,\chi \rightarrow \chi_0}\omega(\mathbb{U}_{d,\chi}^{\mu}) = \omega(\tilde{\mathbb{U}}^{\mu}_{\chi_0}) $, which can be derived by the arguments similar to those in the Claim of \cite[Theorem 4.1]{zhang2020asymptotic} and Theorem \ref{thm:eig}.
\end{proof}

To finish this section, we apply Theorem \ref{thm:R0} to a periodic Ross-Macdonald  model in a patch environment. According to \cite{gao2014periodic}, we consider the following $T$-periodic patch system:
\begin{subequations}\label{eqn:model:RM}\small
	\begin{align}
		&\frac{\mathrm{d} H_i}{\mathrm{d} t} = d \sum_{j=1}^{m}   l_{ij}^H H_j(t), &1 \leq i \leq m,\label{eqn:model:RM:a}\\
		&\frac{\mathrm{d} V_i}{\mathrm{d} t} =  \epsilon_i(t) - \mu_i(t) V_i(t),&1 \leq i \leq m,\label{eqn:model:RM:b}\\
		&\frac{\mathrm{d} h_i}{\mathrm{d} t} = \sigma_{1i} \beta_i (t)\frac{H_i(t) -h_i(t)}{H_i(t)} v_i(t) - \gamma_i h_i(t) +  d\sum_{j=1}^{m}  l_{ij}^H h_j(t),&1 \leq i \leq m,\label{eqn:model:RM:c}\\
		&\frac{\mathrm{d} v_i}{\mathrm{d} t} = \sigma_{2i} \beta_i(t) \frac{h_i(t)}{H_i(t)} (V_i(t)-v_i(t)) - \mu_i(t) v_i(t),&1 \leq i \leq m\label{eqn:model:RM:d}.
	\end{align}
\end{subequations}
Here $H_i(t)$ and $V_i(t)$ are the total populations of humans and mosquitoes  in patch $i$ at time $t$, respectively; $h_i(t)$ and $v_i(t)$ denote the numbers of infectious humans and mosquitoes in patch $i$ at time $t$, respectively; $\epsilon_i(t)>0$ is the recruitment rate of mosquitoes in patch $i$ at time $t$; $\mu_i(t)>0$ is the mortality rate of mosquitoes in patch $i$ at time $t$; $\sigma_{1i}>0$ $(\sigma_{2i}>0)$ are transmission probability from infectious mosquitoes (humans)
to susceptible humans (mosquitoes) in patch $i$ at time $t$;
$\beta_i(t)>0$ is the mosquito biting rate in patch $i$ at time $t$;
$\gamma^{-1}_i>0$ is the human infectious period; $l_{ij}^H$ is the degree of human migration from patch $j$ to patch $i$ for $i \neq j$; $l_{ii}^H$ is the degree of human migration from patch $i$ to all other patches; $d$ is the migration coefficients.
We assume that there is no death or birth during travel, so the emigration rate of humans in patch $i$ satisfies
$\sum_{j=1}^{m} l_{ji}^H=0,~ \forall 1 \leq i \leq m
$; the functions $\epsilon_i$, $\mu_i$ and $\beta_i$ are $T$-periodic and continuous on $\mathbb{R}$.

We further assume that the total populations of human $N^H=\sum_{j=1}^{m} H_j(0)>0$. By \cite[Lemma 3.1 ]{gao2014periodic}, it then follows that \eqref{eqn:model:RM:a} admits a globally asymptotically stable equilibrium $\bm{H}^*=(H_{1}^*,\cdots,H_{n}^*)^T$, which is independent of $d>0$ and $t \in \mathbb{R}$; and that \eqref{eqn:model:RM:b} admits a  globally asymptotically stable $T$-periodic solution $\bm{V}^*(t)=(V_{1}^*(t),\cdots,V_{n}^*(t))^T$, which is independent of $d>0$. Moreover, $\sum_{j=1}^{m} H_{j}^*=\sum_{j=1}^{m} H_j(0)$.
We linearize system \eqref{eqn:model:RM} at the disease-free periodic solution 
$$
(H_{1}^*,\cdots,H_{n}^*,V_{1}^*,\cdots,V_{n}^*,0,\cdots,0,0,\cdots,0)^T
$$
to obtain
\begin{equation}
	\begin{cases}
		\frac{\mathrm{d} h_i}{\mathrm{d} t} = \sigma_{1i} \beta_i (t) v_i(t) - \gamma_i h_i(t) + d\sum_{j=1}^{m}    l_{ij}^{H} h_j(t),&1 \leq i \leq m,\\
		\frac{\mathrm{d} v_i}{\mathrm{d} t} = \sigma_{2i} \beta_i(t) \frac{V_i^*(t)}{H_{i}^*}h_i(t)  - \mu_i(t) v_i(t),&1 \leq i \leq m.
	\end{cases}
\end{equation}
We next choose $n=2m$, and hence, 
$
\mathbb{X}:=\{ \bm{u} \in C(\mathbb{R},\mathbb{R}^{2m}): \bm{u}(t)=\bm{u}(t+T), ~t \in \mathbb{R} \}.
$ Let 
$$
V_{11}(t):=(\delta_{ij}\gamma_i)_{m \times m},~
V_{22}(t):=(\delta_{ij}\mu_i(t))_{m \times m},\text{ and }
V(t):=
\left(
\begin{array}{cc}
	V_{11}(t)& 0 \\ 
	0& V_{22}(t)
\end{array} 
\right).
$$   
Define 
$$
F_{12}(t):=(\delta_{ij}\sigma_{1i} \beta_i (t))_{m \times m},~ F_{21}(t):=\left(\delta_{ij}\sigma_{2i} \beta_i(t) \frac{V_i^*(t)}{H_{i}^*}\right)_{m \times m},
$$
and
$$
F(t):=
\left(
\begin{array}{cc}
	0& F_{12} (t) \\ 
	F_{21}(t)& 0
\end{array} 
\right).
$$
Let $L=(l_{ij})_{2m \times 2m}$ be a cooperative matrix with zero column sum defined by $l_{ij}=l_{ij}^H$, $1\leq i,j \leq m$ and $l_{ij}=0$ if otherwise.
For any $d \geq 0 $, let $\{\Phi_{d}(t,s): t \geq s \}$ be the evolution family on $\mathbb{R}^{2m}$ of  
$$
\frac{\partial \bm{v}}{\partial t}= d L \bm{v} - V(t) \bm{v}, ~t\geq s,
$$
and define a bounded linear positive operator $\mathbb{Q}_{d}: \mathbb{X} \rightarrow \mathbb{X}$ by
$$
[\mathbb{Q}_{d} \bm{u}](t): = \int_{0}^{+\infty} \Phi_{d}(t,t-s)F(t-s)\bm{u}(t-s) \mathrm{d} s, ~ t \in \mathbb{R},~ \bm{u} \in \mathbb{X},
$$
and	 $\mathcal{R}_0 (d):=r(\mathbb{Q}_{d})$. According to Theorem \ref{thm:R0}, we see that $\mathcal{R}_0(d)$ is continuous with respect to $d\in(0,+\infty)$. Indeed,  Theorem \ref{thm:R0} shows that the basic reproduction ratio is continuous with respect to all parameters in the model.

Now we turn to the limiting profile of $\mathcal{R}_0(d)$ as $d \rightarrow +\infty$. 
Let $\bm{q}=(q_1,\cdots,q_m)^T$ be a strongly positive vector such that $L^H\bm{q}=\bm{0}$ and $\sum_{i =1}^{m} q_i =1$, where $L^H=(l_{ij}^H)_{m \times m}$. Notice that $L$ is a reducible matrix. Since $L^H$ is irreducible, we have $\alpha_0=m+1$ due to Lemma \ref{lem:L_K}, and hence, $
\tilde{\mathbb{X}}:=\{ \bm{u} \in C(\mathbb{R},\mathbb{R}^{m+1}): \bm{u}(t)=\bm{u}(t+T), ~t \in \mathbb{R} \}
$. Moreover, $P=(p_{hj})_{\alpha_0\times 2m}$ and $Q=(q_{il})_{2m \times\alpha_0}$ can be defined by
$p_{1j}=1$, $1\leq j \leq m$, $p_{h(h+m-1)}=1$, $2 \leq h \leq \alpha_0$, $q_{i1}=q_{i}$, $1\leq i \leq m$, $q_{(l+m-1)l}=1$, $2\leq l \leq \alpha_0$, $p_{hj}=0$ and $q_{il}=0$ if otherwise.

For any $t \in \mathbb{R}$, define $\tilde{V}(t):=PV(t)Q$, $\tilde{F}(t):= PF(t)Q$.
Let $\{\tilde{\Phi}(t,s): t \geq s \}$ be the evolution family on $\mathbb{R}^{\alpha_0}$ of  $$\frac{\partial \bm{v}}{\partial t}=  - \tilde{V}(t) \bm{v}, ~t\geq s,$$
and define a bounded linear positive operator $\tilde{\mathbb{Q}}: \tilde{\mathbb{X}} \rightarrow  \tilde{\mathbb{X}}$ by
$$
[\tilde{\mathbb{Q}} \bm{u}](t): = \int_{0}^{+\infty} \tilde{\Phi}(t,t-s)\tilde{F}(t-s)\bm{u}(t-s) \mathrm{d} s, ~ t \in \mathbb{R},~ \bm{u} \in \tilde{\mathbb{X}},
$$
and $\tilde{\mathcal{R}}_0:=r(\tilde{\mathbb{Q}})$. It then follows from Theorem \ref{thm:C} that $\mathcal{R}_0(d) \rightarrow \tilde{\mathcal{R}}_0$ as $d \rightarrow +\infty$.

At last we numerically compute $\mathcal{R}_0$ by using the algorithm developed in  \cite{wang2008threshold,liang2019basic}. The baseline parameters are $m=2$, $T=365$, $N^H=500$, $\sigma_{1i}=0.2$, $\sigma_{2i}=0.3$, $\gamma_i=0.02$, $\mu=0.1$, as derived from \cite{gao2014periodic}, $\epsilon_1(t)=12.5-5 \cos(\frac{2\pi t}{T})-5 \cos(\frac{4\pi t}{T})$, $\epsilon_2(t)=12.5- 5\cos(\frac{2\pi t}{T})$, $\beta_i(t)=0.028  \epsilon_i(t)$, $l_{12}^H=1$, and $l_{21}^H=1$. Our numerical result shows that the basic reproduction ratios on patches 1 and 2 are $\mathcal{R}_0^{(1)}=1.5340$ and  $\mathcal{R}_0^{(2)}=1.4478$, respectively. From
Figure \ref{fig1} we observe that the dependence of $\mathcal{R}_0$ with respect to $d$ may be very complicated: $\mathcal{R}_0$ is decreasing when $d$ is small enough and large enough, while it is increasing on an interval. Moreover, $\mathcal{R}_0(d) \rightarrow \max(\mathcal{R}_0^{(1)},\mathcal{R}_0^{(2)})$ as $d\rightarrow 0$, and $\mathcal{R}_0 \rightarrow \tilde{\mathcal{R}}_0=1.5028$ as $d\rightarrow +\infty$. 
For the corresponding time-averaged autonomous system, we found that 
its basic reproduction number is $\bar{\mathcal{R}}_0=1.3555$, which is independent of $d$. This suggests  that the use of a time-averaged autonomous model may underestimate the disease severity in some transmission settings.

\begin{figure}[htbp]
	\centering
	\includegraphics[width=4in]{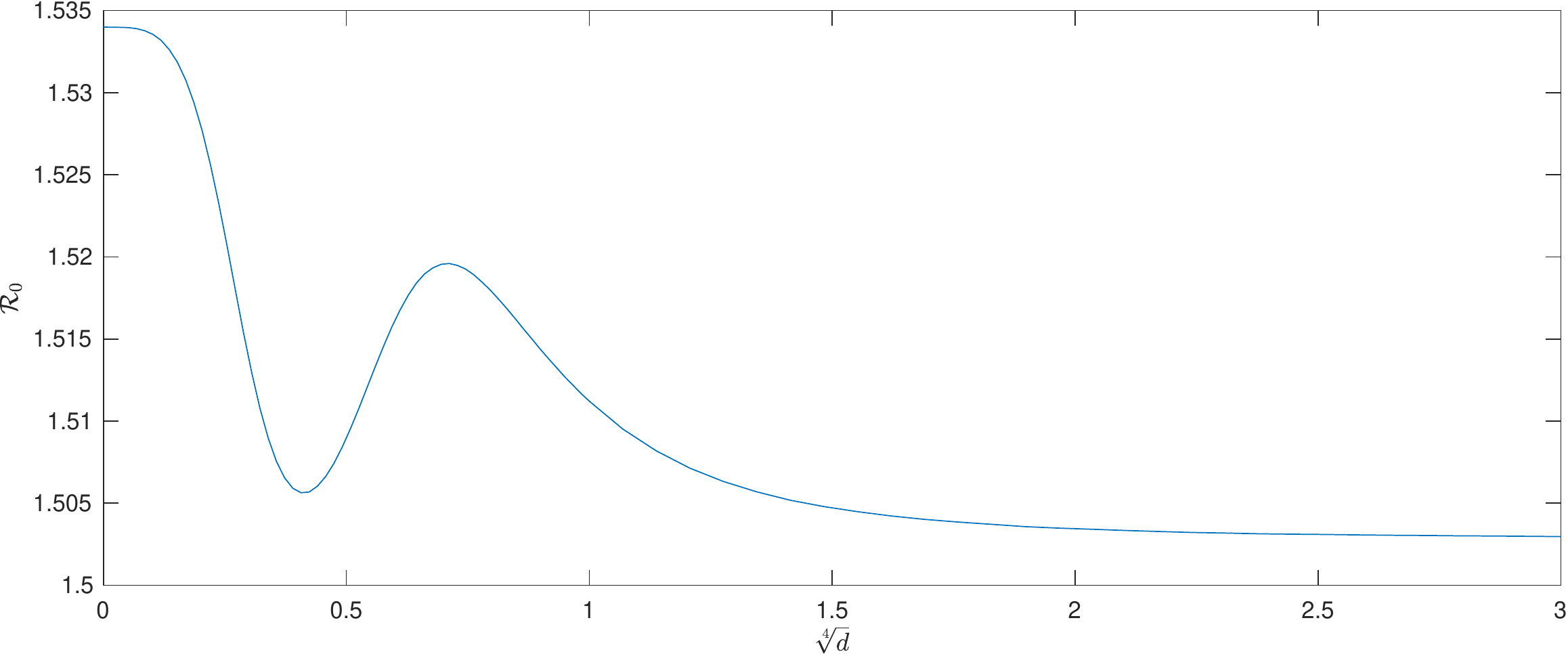}
	\caption{{\small \it  $\mathcal{R}_0$ initially decreases then increases and finally decreases with respect to $d$. }}\label{fig1}
\end{figure}

\

{\noindent \bf Acknowledgements.}
L. Zhang's research is supported in part by the National Natural Science Foundation of China (11901138) and  the Natural Science Foundation of Shandong Province (ZR2019QA006), and X.-Q. Zhao's research is supported in part by the NSERC of Canada. We are grateful to the anonymous referees for their careful reading and valuable comments, which led to an improvement of our original manuscript.


\end{document}